\newcommand{\bd}{\begin{description}}
\newcommand{\ed}{\end{description}}
\newcommand{\bi}{\begin{itemize}}
\newcommand{\ei}{\end{itemize}}
\newcommand{\be}{\begin{enumerate}}
\newcommand{\ee}{\end{enumerate}}
\newcommand{\beq}{\begin{equation}}
\newcommand{\eeq}{\end{equation}}
\newcommand{\beqs}{\begin{eqnarray*}}
\newcommand{\eeqs}{\end{eqnarray*}}
\definecolor{DarkGreen}{rgb}{0.2, 0.6, 0.3}
\newtheorem{theorem}{Theorem}[section]
\newtheorem{conjecture}{Conjecture}
\newtheorem{lemma}{Lemma}[section]
\newtheorem{definition}{Definition}
\newtheorem{case}{Case}
\newtheorem{claim}{Claim}
\newtheorem{example}{Example}[section]
\newtheorem{remark}{Remark}[section]
\newtheorem{fact}{Fact}
\newtheorem{observation}{Observation}
\begin{document}
\title{Planar Tur\'{a}n number of disjoint union of $C_3$ and $C_4$\footnote{Supported by NSFC No.12201375}}
\author{
 Ping Li\footnote{Corresponding author. School of Mathematics and
Statistics, Shaanxi Normal University, Xi'an, Shaanxi, China. {\tt
lp-math@snnu.edu.cn}}}

\date{}
\maketitle

\begin{abstract}
The {\em planar Tur\'{a}n number} of $H$, denoted by $ex_{\mathcal{P}}(n,H)$, is the maximum number of edges in an $H$-free planar graph.
The planar Tur\'{a}n number of $k\geq 3$ vertex-disjoint union of cycles is a trivial value $3n-6$.
Lan, Shi and Song determine the exact value of $ex_{\mathcal{P}}(n,2C_3)$.
We continue to study planar Tur\'{a}n number of vertex-disjoint union of cycles and obtain the exact value of $ex_{\mathcal{P}}(n,H)$, where $H$ is vertex-disjoint union of $C_3$ and $C_4$.
The  extremal graphs are also characterized.
We also improve the lower bound of $ex_{\mathcal{P}}(n,2C_k)$ when $k$ is sufficiently large.

{\bf Keywords:} planar Tur\'{a}n number, disjoint union of $C_3$ and $C_4$, extremal graphs, lower bound.\\[2mm]

\noindent{\bf AMS subject classification 2010:} 05C10, 05C35.
\end{abstract}

\section{Introduction}
The graphs considered here are simple planar graphs.
A planar graph is a graph which can be
embedded in the plane without crossing edges, and we call such an embedding a {\em plane graph}.
We use $C_k$ and $P_k$ to denote a cycle and a path of order $k$, respectively.
We always use $n$ to denote the order of a graph if there is no confusion.
A face of a plane graph is called an $\ell$-face if the boundary is a $C_k$,
A plane graph in which all faces are $3$-faces is called
a {\em triangulation}.
All terminology and notation not defined in this paper are the same as those
in the textbook \cite{Bondy}.
Given a family of graphs $\mathcal{H}$, a graph is called $\mathcal{H}$-free if it does not contain any member of $\mathcal{H}$ as a subgraph.
The Tur\'{a}n number $ex(n,\mathcal{H})$ is the maximum number of edges in an $n$-vertex $\mathcal{H}$-free graph.
As a classical problem of extremal graph theory, Tur\'{a}n type problem has
attracted much attention.

In 2015, Dowden \cite{Dowden} started to study planar Tur\'{a}n number.
The {\em planar Tur\'{a}n number} of $\mathcal{H}$, denoted by $ex_{\mathcal{P}}(n,\mathcal{H})$, is the maximum number of edges in an $\mathcal{H}$-free planar graph.
In particular, if $\mathcal{H}=\{H\}$, then we write $ex_{\mathcal{P}}(n,\mathcal{H})$ as $ex_{\mathcal{P}}(n,H)$.
Duwden \cite{Dowden} showed that $ex_{\mathcal{P}}(n,C_3)=2n-4$, $ex_{\mathcal{P}}(n,C_4)\leq\frac{15(n-2)}{7}$ for $n\geq 4$ and $ex_{\mathcal{P}}(n,C_5)\leq\frac{12n-33}{5}$ for $n\geq 11$.
Ghosh, Gy\H{o}ri, Martin, Paulos and Xiao \cite{GGMPX} gave that $ex_{\mathcal{P}}(n,C_6)\leq\frac{5n}{2}-7$ for $n\geq18$, and the bound is tight.
Let $\mathcal{C}_k^+$ denote the set of $\Theta$-graphs obtained from $C_k$ by adding a chord.
Lan, Shi and Song \cite{LSS-2} obtained bounds  $ex_{\mathcal{P}}(n,\mathcal{C}_4^+)\leq\frac{12(n-2)}{5}$ for $n\geq 4$, $ex_{\mathcal{P}}(n,\mathcal{C}_5^+)\leq\frac{5(n-2)}{2}$ for $n\geq 5$ and $ex_{\mathcal{P}}(n,\mathcal{C}_6^+)\leq\frac{18(n-2)}{7}$ for $n\geq 7$.
In fact, any graph in $\mathcal{C}_k^+$ consists of two cycles.
Fang, Wang and Zhai \cite{FWZ} gave the tight upper bounds of planar Tur\'{a}n number of intersecting triangles.
Lan, Shi, Song \cite{LSS-5} studied the planar Tur\'{a}n numbers of cubic  graph and some disjoint union of cycles.
For other Planar Tur\'{a}n results, we refer the reader to \cite{CBLS,DWZ,GGPXZ,GGPX,GWZ,LSS-3,LS,LSS-1,LSS-4}.

For two graphs $H_1$ and $H_2$, we use $H_1\cup H_2$ to denote the {\em union} of $H_1$ and $H_2$. If $H_1$ and $H_2$ are vertex-disjoint, then let $H_1\vee H_2$ denote the graph obtained from $H_1$ and $H_2$ by adding all possible edges between $H_1$ and $H_2$.
Since the $n$-vertex planar graph $K_2\vee P_{n-2}$ does not contain $k\geq 3$ pairwise vertex-disjoint  cycles, we have that $ex_{\mathcal{P}}(n,H)=3n-6$ for any $H$ consisting of $k\geq 3$ vertex-disjoint cycles.
Lan, Shi, Song  \cite{LSS-4} determined the exact value of $ex_{\mathcal{P}}(n,2C_3)$.
In this paper, we determine the planar Tur\'{a}n number of vertex-disjoint union of $C_3$ and $C_4$ (for convenience, denote it by $C_3\cup C_4$ throughout this paper) when $n\geq 20$, and  characterize  extremal graphs.
Let $G=\frac{n}{2}K_2$ denote an $n$-vertex graph with $E(G)$ a matching of size $\left\lfloor\frac{n}{2}\right\rfloor$.
The following is our main result.

\begin{theorem}\label{main-1}
If $n\geq 20$, then $ex_\mathcal{P}(n,C_3\cup C_4)=\left\lfloor\frac{5n}{2}\right\rfloor-4$ and the extremal graph is $(\frac{n-2}{2}K_2)\vee K_2$.
\end{theorem}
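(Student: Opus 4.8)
The plan is to prove the two bounds separately. For the lower bound I will exhibit $G_0=(\frac{n-2}{2}K_2)\vee K_2$ and verify three things. Write $u,v$ for the two vertices of the $K_2$ and $w_1,\dots,w_{n-2}$ for the vertices of the matching part, ordered so that matched pairs are consecutive. First, $G_0$ is planar: the subgraph $\{u,v\}\vee\{w_1,\dots,w_{n-2}\}=K_{2,n-2}$ embeds with every face a quadrilateral $uw_ivw_{i+1}$, and the edge $uv$ together with each matching edge $w_{2k-1}w_{2k}$ can be inserted into its own such face, of which there are $n-2$ while only $\lfloor\frac{n-2}{2}\rfloor+1$ insertions are needed. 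Second, a direct count gives $e(G_0)=1+\lfloor\frac{n-2}{2}\rfloor+2(n-2)=\lfloor\frac{5n}{2}\rfloor-4$. Third, $G_0$ is $C_3\cup C_4$-free: since $G_0-\{u,v\}$ is a matching, every cycle meets $\{u,v\}$; moreover every $4$-cycle meets \emph{both} $u$ and $v$, because a $4$-cycle through $u$ but not $v$ would need two matching edges sharing a vertex of $G_0-\{u,v\}$, which is impossible. Hence a triangle disjoint from a $4$-cycle would avoid both $u$ and $v$, contradicting that every cycle meets $\{u,v\}$.

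For the upper bound, let $G$ be an extremal $C_3\cup C_4$-free planar graph on $n\ge 20$ vertices. If $G$ is $C_3$-free then $e(G)\le ex_{\mathcal P}(n,C_3)=2n-4$, and if $G$ is $C_4$-free then $e(G)\le ex_{\mathcal P}(n,C_4)\le\frac{15(n-2)}{7}$; both quantities are strictly below $\frac{5n}{2}-4$, so I may assume $G$ contains a triangle $T$ and a $4$-cycle $Q$. The forbidden configuration yields the key cross-intersection property $V(T')\cap V(Q')\ne\emptyset$ for every triangle $T'$ and every $4$-cycle $Q'$; in particular $V(Q)$ is a transversal of all triangles and $V(T)$ is a transversal of all $4$-cycles. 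I then dispose of the non-$2$-connected case: via the block decomposition, the triangle and the $4$-cycle must lie in blocks sharing a vertex (otherwise they are disjoint), and induction on the blocks keeps the edge count below the target. This reduces everything to $G$ being $2$-connected, so that every face of a fixed plane embedding is bounded by a cycle.

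The core is the $2$-connected case, handled by discharging. With $f_i$ the number of $i$-faces, Euler's formula turns $2e\le 5n-8$ into the equivalent face inequality $\sum_{i\ge 3}(10-3i)f_i\le 4$, that is $f_3\le 4+2f_4+5f_5+\cdots$. I assign to each face $F$ of length $\ell(F)$ the charge $10-3\ell(F)$, so only $3$-faces carry positive charge, and design rules sending the surplus of each $3$-face to incident larger faces. The hypothesis is exactly what prevents an uncompensated accumulation of triangular faces: two $3$-faces sharing an edge already yield a $4$-cycle on their four vertices, so that any further triangle must meet those four vertices; combined with the fixed transversals coming from $T$ and $Q$, the triangular faces are forced to cluster around a few vertices and to be separated by quadrilateral-or-larger faces, which supply the required negative charge. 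Tracking the total surplus of $4$ gives the bound, and reading off the equality case of the discharging---where only $3$- and $4$-faces survive, in the alternating pattern dictated by the two transversal vertices---recovers precisely $(\frac{n-2}{2}K_2)\vee K_2$ and hence the extremal characterization.

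The step I expect to be the main obstacle is this local-to-global discharging analysis. I must classify the configurations of mutually adjacent triangular faces and control the behaviour of the high-degree transversal vertices, while the global $C_3\cup C_4$-free condition is only directly available through the single pair $T,Q$. Forcing the discharging to close at the exact surplus $4$, and simultaneously extracting enough rigidity to identify the equality graph, is where the threshold $n\ge 20$ and the bulk of the casework will be spent.
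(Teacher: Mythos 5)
Your lower-bound verification is correct and matches what the paper does: the graph $(\frac{n-2}{2}K_2)\vee K_2$ is planar, has $\lfloor\frac{5n}{2}\rfloor-4$ edges, and is $C_3\cup C_4$-free because every $4$-cycle must use both vertices of the dominating $K_2$. The elementary reductions at the start of your upper bound (discarding the $C_3$-free and $C_4$-free cases via Dowden's bounds, and the Euler-formula reformulation $\sum_{i\ge 3}(10-3i)f_i\le 4$) are also sound.

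The genuine gap is that the discharging argument, which is the entire content of the upper bound, is never actually carried out: you state no discharging rules, you do not verify that every face ends with the right charge, and the equality characterization is asserted rather than derived. The observations you do extract from the hypothesis --- that $V(T)$ is a transversal of all $4$-cycles and $V(Q)$ of all triangles --- are far too weak to control the clustering of $3$-faces; for instance they do not by themselves rule out large patches of mutually adjacent triangular faces all meeting one transversal vertex. The paper's proof shows what is really needed here: it counts \emph{interior edges} (edges lying on two $3$-faces), proves via the identity $f_3\le\frac{e(G)+|E_I(G)|}{3}$ that the theorem reduces to the bound $|E_I(G)|\le\lfloor\frac{n}{2}\rfloor+4$, and then spends two full sections classifying how the $\Theta$-graphs $\Theta_e$ of independent interior edges can pairwise intersect (the graphs $H_0,\dots,H_5$ and their ``generating graphs''), since each such $\Theta_e$ contains both a triangle and a $4$-cycle and hence interacts strongly with every other one. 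An equivalent amount of structural casework would be required to make your discharging close at surplus $4$ and to extract the extremal graph, and none of it is present. Your reduction to the $2$-connected case is also only sketched (the claim that block-by-block induction ``keeps the edge count below the target'' needs an argument, since a single block can carry almost all the edges); the paper avoids this issue entirely by working with the face/interior-edge count on the possibly non-$2$-connected plane graph. As written, the proposal identifies the right target inequality but leaves the proof of it open.
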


If $G=(\frac{n-2}{2}K_2)\vee K_2$, then it is clear that $G$ is $C_3\cup C_4$-free.
Therefore, $ex_\mathcal{P}(n,C_3\cup C_4)\geq e(G)= \left\lfloor\frac{5n}{2}\right\rfloor-4$.
Thus, we only need to prove that for any $C_3\cup C_4$-free planar graph $G$, $e(G)\leq \left\lfloor\frac{5n}{2}\right\rfloor-4$ and the equality holds only for $G=(\frac{n-2}{2}K_2)\vee K_2$.

We will prove the theorem in Section \ref{sec-2}.
In Sections \ref{ind} and \ref{sec-4}, we prove some necessary lemmas that will be used in the proof of Theorem \ref{main-1}.
In Section \ref{sec-5}, we give a lower bound of $ex_{\mathcal{P}}(n,2C_k)$ when $k$ is sufficiently large.

\section{Proof of Theorem \ref{main-1}}\label{sec-2}

Let $G$ be an $n$-vertex $C_3\cup C_4$-free plane graph such that $e(G)$ is as large as possible, where $n\geq 20$.
Then
\begin{align}\label{eq-0}
e(G)\geq \left\lfloor\frac{5n}{2}\right\rfloor-4.
\end{align}
Our aim is to show that $e(G)\leq \left\lfloor\frac{5n}{2}\right\rfloor-4$ and the equality holds only for $G=(\frac{n-2}{2}K_2)\vee K_2$.

If an edge $e$ of $G$ lies in the boundaries of exactly two $3$-faces, then we call $e$ an {\em interior edge} of $G$.
We use $E_I(G)$ to denote the set of all interior edges of $G$.
For an edge $e=xy$ of $E_I(G)$, we call $F_1\cup F_2$ a {\em $\Theta$-graph of $xy$} and denote it by $\Theta_{xy}$ or $\Theta_e$, where $F_1$ and $F_2$ are the two $3$-faces of $G$ whose boundaries contain $e$.
Let $f_\ell(G)$ denote the number of $\ell$-faces in $G$ and let $f(G)$ denote the number of faces of $G$.

Let $\{e_1,\ldots,e_t\}\subseteq E_I(G)$.
For a face $F$ of the plane subgraph $H=\bigcup_{i\in[t]}\Theta_{e_i}$, if $F$ is not a $3$-face of some $\Theta_i$, then we call $F$ a {\em pseudo-face} of $H$.
Suppose $F$ is a pseudo-face of $H$ and $u$ is a vertex (resp. $f$ is an edge) of $G$ lying in $F$, but not in the boundary of $F$. Then we call $u$ an interior vertex of $F$ (resp. call $f$ an interior edge of $F$).
If a vertex $u'$ of $G$ is neither an interior vertex (resp. $f'$ is neither an interior edge) of $F$ nor a vertex (resp. nor an edge) of the boundary of $F$, then we call $u'$ an {\em exterior vertex of $F$} (resp. call $f'$ an {\em exterior edge of $F$}).
For convenience, we say that a face $F$ of $G$ contains an edge $e$ if $e$ is in the boundary of $F$.
If the boundary of $F$ is a cycle $u_1u_2\ldots u_ku_1$, then we also use
$u_1u_2\ldots u_ku_1$ to denote the face $F$.
We also use $F$ to denote its boundary if the is no confusion.
\begin{example}
See Figure \ref{example} (the graphs $G'$ is not necessary $C_3\cup C_4$-free), let $e_1=a_1a_2,e_2=b_1b_2,e_3=c_1c_2$ and $e_4=c_2c_3$.
Then $e_i\in E_I(G')$ for each $i\in[4]$ and $H'=\bigcup_{i\in[4]}\Theta_{e_i}$ is the plane subgraph of $G'$.
Observe that $H'$ has two pseudo-faces $F_1=ya_2b_2c_2y$ and $F_2=ya_1b_1c_1c_3y$.
The vertex $x$ is an interior vertex of $F_2$ (resp. an exterior vertex of $F_1$) and the edges $a_1c_1,a_1x,xc_1$ are interior edges of $F_2$ (resp.  exterior edges of $F_1$).
\end{example}
\begin{figure}[h]
    \centering
    \includegraphics[width=350pt]{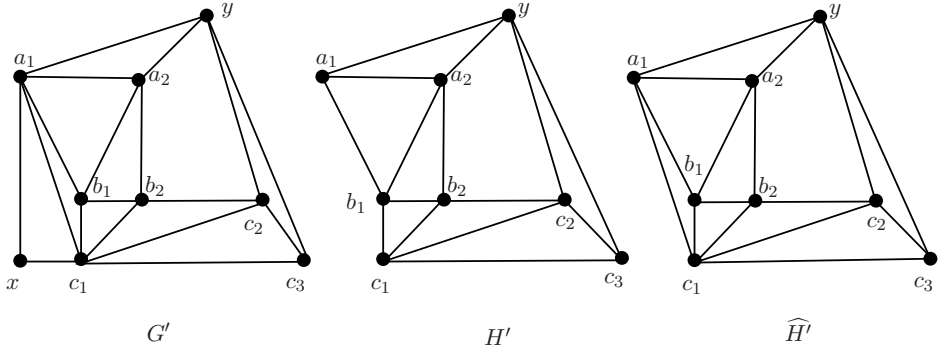}\\
    \caption{The plane graph $G$ and its plane subgraphs.} \label{example}
\end{figure}
It is worth noticing that a pseudo-face of $H$ may also a face of $G$.
For example, see Figure \ref{example}, $F_1$ is a face of $G'$, as well as a pseudo-face of $H'$.
We use $\alpha'(G)$ to denote the size of a maximum matching of $G$.

\begin{lemma}\label{key-lem-1}
If $\alpha'(G[E_I(G)])\leq 1$, then one of the following statements holds.
\begin{enumerate}
  \item $|E_I(G)|\leq 9$.
  \item $f_3(G)+f_4(G)\leq n-1$.
\end{enumerate}
\begin{proof}
If $\alpha'(G[E_I(G)])=0$, then $E_I(G)=\emptyset$ and the first statement holds.
If $\alpha'(G[E_I(G)])=1$, then $G[E_I(G)]$ is either a $K_3$ or a star, and the first statement holds for $G[E_I(G)]=K_3$.
Therefore, suppose $G[E_I(G)]$ is a star with center $u$ and $N_G(u)=\{u_1,u_2,\ldots,u_t\}$.
Without loss of generality, suppose $u_1,u_2,\ldots,u_t$ appear in a clockwise direction.
In addition, let $E_I(G)=\{uu_{c_1},uu_{c_2},\ldots,uu_{c_s}\}$, where $c_1<c_2<\ldots< c_s$.
In order to complete the proof, we only need to show that if $|E_I(G)|=s\geq 10$  then every $\ell$-face contains $u$, where $\ell\in\{3,4\}$.
Suppose to the contrary that there is an $\ell$-face $F$ that does not contain $u$.
If $\ell=3$, then $|V(F)\cap \{u_{c_i}:i\in[s]\}|\leq 3$.
Since $s\geq 10$, it follows that there is an integer $j\in[s]$ such that $|V(F)\cap \{u_{c_{j-1}},u_{c_j},u_{c_{j+1}}\}|=\emptyset$.
Therefore, $F\cup (\Theta_{uu_{c_j}}-uu_{c_j})$ is a $C_3\cup C_4$, a contradiction.
If $\ell=4$, then $|V(F)\cap \{u_{c_i}:i\in[s]\}|\leq 4$.
Since $t\geq 10$, it follows that there is an integer $j\in[s]$ such that $|V(F)\cap \{u_{c_j},u_{c_{j+1}}\}|=\emptyset$.
Therefore, $F\cup (\Theta_{uu_{c_j}}-u_{c_j-1})$ is a $C_3\cup C_4$, a contradiction.
Therefore, every $\ell$-face contains $u$ for $\ell\in\{3,4\}$.
\end{proof}
\end{lemma}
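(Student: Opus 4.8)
The plan is to first reduce to a single structural case using the classical fact that a graph whose maximum matching has size at most $1$ must be empty, a star, or a triangle $K_3$. If $G[E_I(G)]$ is empty then $|E_I(G)|=0$, and if it is a $K_3$ then $|E_I(G)|=3$; in both cases the first alternative $|E_I(G)|\leq 9$ holds immediately. The only remaining case is when $G[E_I(G)]$ is a star, whose center I denote by $u$ and whose leaves are the interior-edge neighbors of $u$. If this star has at most $9$ edges the first alternative holds again, so I may assume $s:=|E_I(G)|\geq 10$ and aim to establish the second alternative $f_3(G)+f_4(G)\leq n-1$.

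The heart of the argument is the claim that, once $s\geq 10$, every $3$-face and every $4$-face of $G$ contains the vertex $u$. I would prove this by contradiction. Suppose $F$ is an $\ell$-face with $\ell\in\{3,4\}$ and $u\notin V(F)$. For each interior edge $uu_{c_j}$, the associated graph $\Theta_{uu_{c_j}}$ is the union of the two $3$-faces sharing that edge; deleting the shared edge leaves a $C_4$, while deleting one apex vertex leaves a $C_3$. The key observation is that each of these small cycles involves only $u$, the endpoint $u_{c_j}$, and the one or two apex vertices of the two triangles at $uu_{c_j}$ (the neighbors of $u$ adjacent to $u_{c_j}$ in the rotation at $u$) — a bounded \emph{window} of neighbors of $u$. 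Since $F$ has only $\ell\leq 4$ vertices and $u\notin V(F)$, the vertices of $F$ can meet the windows of only boundedly many interior edges: a pigeonhole estimate shows that at most $9$ interior edges have their window blocked by $V(F)$. Hence when $s\geq 10$ there is an index $j$ whose window avoids $V(F)$ entirely, and then $F$ together with the appropriate cycle extracted from $\Theta_{uu_{c_j}}$ (a $C_4$ when $\ell=3$, a $C_3$ when $\ell=4$) forms a vertex-disjoint $C_3\cup C_4$, contradicting that $G$ is $C_3\cup C_4$-free.

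The final step is a short counting argument. Having shown that every $3$-face and every $4$-face is incident to $u$, and since in a plane graph the number of distinct faces incident to a fixed vertex is at most its degree (each such face uses at least one of the $\deg_G(u)$ corners at $u$), I conclude $f_3(G)+f_4(G)\leq \deg_G(u)\leq n-1$, which is the second alternative.

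I expect the main obstacle to be making the window/pigeonhole estimate precise — in particular, correctly identifying the apex vertices at each interior edge and verifying that $9$ is exactly the worst case, so that $s\geq 10$ is the right threshold. A secondary care point is confirming that the extracted cycle is genuinely vertex-disjoint from $F$ even when some vertices of $F$ happen to be other neighbors of $u$, which is precisely what the clean window guarantees.
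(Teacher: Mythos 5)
Your proposal is correct and follows essentially the same route as the paper: reduce via the matching condition to the case where $G[E_I(G)]$ is a star with center $u$, show by a pigeonhole argument on the rotation at $u$ that when $s\geq 10$ every $3$-face and $4$-face must contain $u$ (since a face avoiding $u$ can block at most $9$ of the windows, leaving a clean $\Theta_{uu_{c_j}}$ from which to extract the complementary cycle), and conclude $f_3(G)+f_4(G)\leq \deg_G(u)\leq n-1$. Your identification of the apex vertices and the worst-case count of $9$ blocked windows matches the paper's bound exactly.
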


The following lemma is necessary for the proof of Theorem \ref{main-1}, and we will prove it in Section \ref{ind}.
\begin{lemma}\label{key-lem-2}
If $\alpha'(G[E_I(G)])\geq 2$, then $|E_I(G)|\leq \left\lfloor\frac{n}{2}\right\rfloor+4$ and the equality holds only for $G=(\frac{n-2}{2} K_2)\vee K_2$.
\end{lemma}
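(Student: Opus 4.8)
The plan is to exploit the rigidity that $C_3\cup C_4$-freeness imposes on the $\Theta$-graphs of interior edges. Recall that for an interior edge $e=xy$ the graph $\Theta_e$ consists of the two triangular faces $xya$ and $xyb$ sharing $xy$; in particular $\Theta_e$ contains the triangle $xya$ and the $4$-cycle $axby$ simultaneously. The first observation I would record is that no two interior edges can have vertex-disjoint $\Theta$-graphs, since a triangle taken from one together with the $4$-cycle taken from the other would be a $C_3\cup C_4$. More precisely, if $e_1=x_1y_1$ and $e_2=x_2y_2$ are vertex-disjoint interior edges with apex pairs $A_1=\{a_1,b_1\}$ and $A_2=\{a_2,b_2\}$, then requiring each of the two triangles of $\Theta_{e_1}$ to meet the $4$-cycle of $\Theta_{e_2}$ (and symmetrically) forces strong containments between $A_1$, $A_2$ and the two endpoint pairs. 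The hypothesis $\alpha'(G[E_I(G)])\ge 2$ guarantees such a disjoint pair $e_1,e_2$, and the aim of this first step is to extract two \emph{hub} vertices $u,v$ so that $\{u,v\}$ is the common apex pair of $e_1$ and $e_2$, i.e.\ both edges are chords of $4$-cycles of the form $uxvy$.

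With the hubs $u,v$ fixed, the heart of the argument is the structural claim that all but a bounded number of interior edges have $\{u,v\}$ as their apex pair, meaning both endpoints lie in $N(u)\cap N(v)$ and the two $3$-faces at the edge are a triangle through $u$ and a triangle through $v$. I would prove this by testing an arbitrary interior edge $e'$ against the two disjoint hub edges using the same $\Theta$-intersection dichotomy: if $e'$ is disjoint from one of the hub edges, its apex pair is pinned to $\{u,v\}$, so the only interior edges that can escape are those sharing an endpoint with a hub edge. Bounding the number of such exceptional edges is exactly where planarity enters: around each of $u$ and $v$ the triangular faces are cyclically ordered, so only the edges at the two extreme ends of the fan through $N(u)\cap N(v)$ can fail to be hub edges, and I expect to show there are at most five of them, mirroring the edge $uv$ together with the four extreme spokes of the extremal graph.

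The counting is then short. The hub interior edges have both endpoints in $N(u)\cap N(v)$, and the plane structure around $u$ and $v$ forces them to be pairwise disjoint (a shared endpoint would create, around that vertex, a local configuration one checks cannot occur in a $C_3\cup C_4$-free plane graph of this size), so they form a matching on $N(u)\cap N(v)$. Since $u,v$ together with the $2k$ endpoints of these $k$ edges are distinct vertices, $k\le (n-2)/2\le \lfloor n/2\rfloor-1$, and adding the at most five exceptional interior edges yields $|E_I(G)|\le \lfloor n/2\rfloor+4$. For the equality discussion I would trace each inequality backwards: equality forces $n$ even, forces $N(u)\cap N(v)$ to contain all $n-2$ remaining vertices matched perfectly by hub edges, forces $uv\in E(G)$, and forces the five exceptions to be exactly $uv$ and the four extreme spokes; reassembling these constraints reconstructs $G=(\frac{n-2}{2}K_2)\vee K_2$.

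The main obstacle is the bounded-exceptions step: the raw $\Theta$-intersection dichotomy leaves several degenerate overlaps to dispose of (an endpoint of one interior edge serving as an apex of another, or two would-be hub edges sharing a vertex), and eliminating these cleanly while simultaneously establishing the sharp count of five is the delicate part. I anticipate organizing it as an induction on $n$, deleting the two endpoints $x_i,y_i$ of one hub interior edge so that the target bound drops by exactly one; the subtlety is that such a deletion can change which edges are interior and can even destroy the hypothesis $\alpha'(G[E_I(G)])\ge 2$, so the induction must be set up to hand those reduced instances to Lemma \ref{key-lem-1}. This interaction, together with the parity bookkeeping hidden in the floor function, is what I expect to consume most of the effort, which is presumably why the proof is deferred to its own section.
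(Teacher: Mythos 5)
Your picture of the extremal configuration (a hub pair $\{u,v\}$, a matching on $N(u)\cap N(v)$, and exactly five exceptional interior edges, namely $uv$ and the four extreme spokes) is the right one, but the route you propose has a genuine gap at its very first step. From Lemma \ref{base} one only gets $|V(\Theta_{e_1})\cap V(\Theta_{e_2})|\geq 2$ for two independent interior edges; this does \emph{not} force the two apex pairs to coincide. The paper's Lemma \ref{lem-base-1} classifies the possible unions $\Theta_{e_1}\cup\Theta_{e_2}$ into six configurations $H_0,\dots,H_5$, and only in $H_2$ is the shared pair a common apex pair. The other configurations are not pathologies you can wave away: even in the extremal graph $(\frac{n-2}{2}K_2)\vee K_2$ there are independent interior edges (e.g.\ a matching edge together with an extreme spoke) whose $\Theta$-graphs realize $H_0$ or $H_1$ rather than $H_2$, and in those cases the correct ``hub'' is the two-element intersection $V(\Theta_{e_1})\cap V(\Theta_{e_2})$, which mixes an endpoint of one edge with an apex of the other. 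Handling the configurations $H_0,H_1,H_3,H_4,H_5$ --- showing each either forces the hub structure anyway or caps $|E_I(G)|$ well below $\lfloor n/2\rfloor+4$ --- is precisely the content of the generating-graph machinery of Sections \ref{ind} and \ref{sec-4} (Lemmas \ref{lem-base-2} through \ref{app}), and it is the bulk of the work, not a boundary nuisance.

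The induction you sketch for the bounded-exceptions step also does not close as described. First, deleting the two endpoints of a hub matching edge destroys not only that edge but every interior edge lying on a $3$-face through either endpoint (those $3$-faces merge into larger faces), so for an ``extreme'' matching edge you can lose up to five interior edges while the target $\lfloor n/2\rfloor+4$ drops by only one; you would have to know in advance which edges are extreme, which is circular. Second, your fallback when the deletion kills the hypothesis $\alpha'(G[E_I(G)])\geq 2$ is Lemma \ref{key-lem-1}, but its second alternative bounds $f_3(G)+f_4(G)$, not $|E_I(G)|$, so it cannot serve as a base case for an induction whose inductive quantity is $|E_I(G)|$. The paper avoids both problems by arguing globally: it fixes a minimal configuration $H^*$, grows it to a maximal generating graph $H_{max}$, shows the leftover interior edges $E_{out}$ form a matching of apex pair equal to the fixed hub (Lemma \ref{lem-matching}), and bounds the interior edges inside $\widehat{H}_{max}$ by $7$ in the critical cases (Lemma \ref{app}), giving $\lfloor\frac{n-6}{2}\rfloor+7=\lfloor\frac n2\rfloor+4$ directly.
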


{\bf Proof of Theorem \ref{main-1}}
Note that $2e(G)=\sum_{i\geq 3}if_i(G)$.
If $G[E_I(G)]$ is a star and $|E_I(G)|\geq 10$, then by Lemma \ref{key-lem-1}, we have that $f_3(G)+f_4(G)\leq n-1$.
Note that
\begin{align*}
2e(G)&\geq 3f_3(G)+4f_4(G)+5(f(G)-f_3(G)-f_4(G))=5f(G)-2f_3(G)-f_4(G).
\end{align*}
By Euler's formula $f(G)=e(G)+2-n$,
$$e(G)\leq \frac{5n+2(f_3(G)+f_4(G))-10}{3}.$$
Since $n\geq 20$ and $f_3(G)+f_4(G)\leq n-1$, it follows that $e(G)< \left\lfloor\frac{5n}{2}\right\rfloor-4$, which contradicts the inequality (\ref{eq-0}).

If $G[E_I(G)]$ is a not star or $G[E_I(G)]$ is a star but $|E_I(G)|\leq 9$,
then by Lemmas  \ref{key-lem-1} and \ref{key-lem-2}, we get that
$|E_I(G)|\leq \left\lfloor\frac{n}{2}\right\rfloor+4$ since $n\geq 20$, and the equality holds only for $G=(\frac{n-2}{2} K_2)\vee K_2$.
Since
$$2e(G)\geq 3f_3(G)+4(f(G)-f_3(G))=3f_3(G)+4(e(G)+2-n-f_3(G)),$$
it follows that $2e(G)\leq f_3(G)+4n-8$.
Let
$$E'=\{e:e\mbox{ is in the boundary of exactly one 3-face of }G\}.$$
Then $E'\cap E_I(G)=\emptyset$ and hence $|E'|\leq e(G)-|E_I(G)|$.
Since $f_3(G)=\frac{|E'|+2|E_I(G)|}{3}$, it follows that
$f_3(G)\leq \frac{e(G)+|E_I(G)|}{3}$.
Therefore,
$$
e(G)\leq\left\lfloor\frac{|E_I(G)|}{5}+\frac{12n}{5}
-\frac{24}{5}\right\rfloor\leq
\left\lfloor\frac{5n}{2}\right\rfloor-4,$$
and the equality holds if and only if $|E_I(G)|= \left\lfloor\frac{n}{2}\right\rfloor+4$.
So, we can further get that $e(G)\leq\left\lfloor\frac{5n}{2}\right\rfloor-4$ and the equality holds only for $G=(\frac{n-2}{2} K_2)\vee K_2$.
The proof thus complete.

The remaining work is to prove Lemma \ref{key-lem-2}.
In Section \ref{ind}, we will introduce the notion of generating graph and prove Lemma \ref{key-lem-2}; in Section \ref{app}, we will prove a key lemma that will be used in the proof of Lemma \ref{key-lem-2}.

\section{Proof of Lemma \ref{key-lem-2}}\label{ind}

The following result is obvious, for otherwise there is a $C_3\cup C_4$, a contradiction.
\begin{lemma}\label{base}
For any two independent edges $e,f$ of $E_I(G)$, $|V(\Theta_e)\cap V(\Theta_f)|\geq 2$.
\end{lemma}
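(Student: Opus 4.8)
The plan is to argue by contradiction: assuming $|V(\Theta_e)\cap V(\Theta_f)|\le 1$, I would construct a vertex-disjoint copy of $C_3$ and $C_4$ inside $\Theta_e\cup\Theta_f$, contradicting that $G$ is $C_3\cup C_4$-free. First I would record the structure of a single $\Theta$-graph. Writing $e=xy$ and letting $F_1=xya$, $F_2=xyb$ be the two $3$-faces containing $e$, one checks that $a\neq b$ (otherwise the triangle $xya$ would bound both of its incident faces, forcing $G=K_3$ and contradicting $n\ge 20$), so $V(\Theta_e)=\{x,y,a,b\}$ consists of four distinct vertices. The point is that $\Theta_e$ contains \emph{two} triangles, $xya$ and $xyb$, as well as a $4$-cycle $xayb$ spanning all four of its vertices; the same holds for $\Theta_f=\Theta_{x'y'}$ with $V(\Theta_f)=\{x',y',a',b'\}$.

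The case $V(\Theta_e)\cap V(\Theta_f)=\emptyset$ is immediate, since the triangle $xya\subseteq\Theta_e$ and the spanning $4$-cycle of $\Theta_f$ are then vertex-disjoint and form a $C_3\cup C_4$. For the remaining case of exactly one common vertex $v$, the key observation is that $\Theta_e$ contains a triangle avoiding $v$ precisely when $v\notin\{x,y\}$ (use $xyb$ if $v=a$, and $xya$ if $v=b$), and symmetrically $\Theta_f$ contains a triangle avoiding $v$ precisely when $v\notin\{x',y'\}$. Here the independence of $e$ and $f$ is exactly what I need: since $\{x,y\}\cap\{x',y'\}=\emptyset$, the vertex $v$ cannot simultaneously be an endpoint of $e$ and of $f$. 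Hence at least one of the two $\Theta$-graphs, say $\Theta_e$, admits a triangle $T$ with $v\notin V(T)$; I would take $T$ as the $C_3$ and the spanning $4$-cycle of $\Theta_f$ as the $C_4$. As $v$ is the only shared vertex and $v\notin V(T)$, these two subgraphs are vertex-disjoint, again producing a $C_3\cup C_4$.

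Since both cases yield a contradiction, $|V(\Theta_e)\cap V(\Theta_f)|\ge 2$, as claimed. The only delicate point — and the step I expect to require the most care — is the single-intersection case, where one must rule out the configuration in which the shared vertex lies on both edges $e$ and $f$ (there no triangle of $\Theta_e$ or $\Theta_f$ would avoid it); the hypothesis that $e$ and $f$ are independent is precisely what excludes this configuration.
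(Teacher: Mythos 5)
Your argument is correct and is exactly the justification the paper has in mind: Lemma~\ref{base} is stated there with only the one-line remark that otherwise a $C_3\cup C_4$ would appear, and your case analysis (disjoint $\Theta$-graphs give a triangle plus the spanning $4$-cycle; a single shared vertex $v$ cannot be an endpoint of both $e$ and $f$ by independence, so one $\Theta$-graph still contains a triangle avoiding $v$) is a faithful and complete writing-out of that observation. No gaps; the only cosmetic point is that ruling out $a=b$ really follows from $e(G)\ge\lfloor 5n/2\rfloor-4$ rather than literally $G=K_3$, which does not affect the argument.
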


Let $B_0$ be a plane subgraph of $G$, $\{e_1,e_2,\cdots,e_t\}\subseteq E_I(G)$ and $B_i=B_{i-1}\cup \Theta_{e_i}$ for each $0\leq i\leq t$.
If each $e_i$ satisfies that $|V(e_i)\cap V(B_{i-1})|=1$, then we call $B_j$ a {\em generating graph} of $B_0$ for each $0\leq j\leq t$.
Suppose $e=xy,f=ab$ are two independent edges of $E_I(G)$ and $H=\Theta_{xy}\cup \Theta_{ab}$.
Furthermore, assume that $V(\Theta_{xy})=\{x,u_1,y,u_2\}$ and $V(\Theta_{ab})=\{a,v_1,b,v_2\}$.
We now characterize all possible plane subgraphs of $H$ below.
\begin{lemma}\label{lem-base-1}
$H$  is  one of $H_i$, $i\in[0,6]$ $($see Figure \ref{Base=graph}$)$.
\end{lemma}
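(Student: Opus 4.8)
The plan is to reduce the statement to a finite combinatorial enumeration of the ways the two $\Theta$-graphs can overlap. First I would record the two structural facts that drive everything: since $H\subseteq G$ is a plane subgraph, $H$ is itself $C_3\cup C_4$-free, and since $e=xy$ and $f=ab$ are independent we have $\{x,y\}\cap\{a,b\}=\emptyset$. Lemma \ref{base} gives $|V(\Theta_{xy})\cap V(\Theta_{ab})|\ge 2$, and trivially this intersection has size at most $4$, so $|V(H)|\le 6$. In particular $H$ has at most six vertices, so (as a copy of $C_3\cup C_4$ needs seven) the $C_3\cup C_4$-free requirement is automatically satisfied and plays no further role beyond Lemma \ref{base}. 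The entire content of the lemma is therefore the enumeration of the \emph{planar} ways in which two copies of $K_4-e$ (which is exactly what each $\Theta$-graph is) can be glued along at least two vertices.

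Next I would classify the gluings. Each $\Theta$-graph has two \emph{pole} vertices (the endpoints of its defining interior edge, namely $x,y$ for $\Theta_{xy}$ and $a,b$ for $\Theta_{ab}$) and two \emph{apex} vertices ($u_1,u_2$ and $v_1,v_2$). Because $e,f$ are independent, a pole of one $\Theta$-graph can never be identified with a pole of the other, so the only admissible identifications are pole--apex and apex--apex. I would therefore encode an overlap by the triple
\[
\alpha=|\{a,b\}\cap\{u_1,u_2\}|,\qquad \beta=|\{x,y\}\cap\{v_1,v_2\}|,\qquad \gamma=|\{u_1,u_2\}\cap\{v_1,v_2\}|,
\]
subject to $\alpha+\gamma\le 2$ and $\beta+\gamma\le 2$ (each apex is used at most once) together with $\alpha+\beta+\gamma\ge 2$ (Lemma \ref{base}). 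Swapping the roles of $\Theta_{xy}$ and $\Theta_{ab}$ interchanges $\alpha$ and $\beta$ and fixes $\gamma$, so it suffices to enumerate these triples up to that swap. A short check then shows there are exactly seven such triples, namely $(2,2,0)$, $(2,1,0)$, $(1,1,1)$, $(2,0,0)$, $(1,1,0)$, $(1,0,1)$, $(0,0,2)$, which I expect to correspond bijectively to $H_0,\dots,H_6$.

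For each of the seven triples I would reconstruct $H$ explicitly by performing the indicated identifications and taking the union of the two edge sets, and then verify that the result is planar and coincides with the corresponding $H_i$ in Figure \ref{Base=graph}. For example $(2,2,0)$ forces $\{a,b\}=\{u_1,u_2\}$ and $\{x,y\}=\{v_1,v_2\}$, whose union is $K_4$; the triples $(1,1,1)$ and $(2,1,0)$ give five-vertex graphs which one recognises as $K_5$ minus one, respectively two, edges; and the four size-$2$ overlaps give six-vertex graphs built on a common $K_{2,4}$-type core together with the two pole edges. In each case the graph has at most $3|V(H)|-6$ edges, and I would exhibit a plane embedding directly, so that no type is excluded on planarity grounds.

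The main obstacle I anticipate is not the enumeration itself but the bookkeeping of the plane structure: I must confirm that, within each combinatorial type, the plane subgraph $H$ inherited from $G$ is forced (up to reflection and relabelling) to be precisely the embedding drawn as $H_i$, rather than some inequivalent planar embedding of the same abstract graph. Handling this cleanly requires tracking, for each shared vertex, the cyclic order in which the faces of $\Theta_{xy}$ and $\Theta_{ab}$ appear around it, and checking that the two $3$-faces of each $\Theta$-graph survive as faces of $H$. This is where the case analysis becomes delicate, and I would organise it by the overlap size $|V(H)|\in\{4,5,6\}$ so that the three-connected small cases (where the embedding is unique by Whitney's theorem) are dispatched first and the less rigid six-vertex cases are treated last.
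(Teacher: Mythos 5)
Your proposal follows essentially the same route as the paper: both proofs reduce the lemma to an exhaustive case analysis of how $\Theta_{xy}$ and $\Theta_{ab}$ can share vertices, using only Lemma \ref{base} and the independence of $e$ and $f$ (which forbids pole--pole identifications). The paper splits on $|V(\Theta_e)\cap V(\Theta_f)|\in\{2,3\}$ and, within size $2$, on whether the shared pair spans an edge of $\Theta_e$ and/or $\Theta_f$; your triple $(\alpha,\beta,\gamma)$ is a cleaner certificate that no case is missed, and six of your types, namely $(1,1,0),(1,0,1),(0,0,2),(2,0,0),(1,1,1),(2,1,0)$, reproduce exactly the six configurations the paper derives. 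The one substantive divergence is your seventh type $(2,2,0)$, which you retain on the grounds that ``no type is excluded on planarity grounds.'' It should in fact be excluded, though not by planarity of $H$ alone: in that configuration the four triangles $xyu_1$, $xyu_2$, $u_1u_2x$, $u_1u_2y$ are all $3$-faces of $G$ (each is a face of one of the two $\Theta$-graphs), so all four faces of the plane $K_4$ they form are faces of $G$, leaving no region in which any further vertex of $G$ could lie; hence $G=K_4$, contradicting $n\geq 20$. You need this observation, since the rest of the paper only ever works with $H_0,\ldots,H_5$ (the definition of $\ell^*$ and every subsequent lemma range over $[0,5]$; the ``$[0,6]$'' in the statement appears to be a slip), and the paper's own one-line justification for $|V(\Theta_e)\cap V(\Theta_f)|\leq 3$ is exactly the step your write-up would otherwise leave unsupported. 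On the embedding-rigidity question you are more careful than the paper, and the fact you single out --- that the two $3$-faces of each $\Theta$-graph are faces of $G$ and hence survive as faces of $H$ --- is precisely what forces the drawings in Figure \ref{Base=graph}.
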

\begin{proof}
By Lemma \ref{base}, we have that $|V(\Theta_e)\cap V(\Theta_f)|\geq 2$.
Since $e$ and $f$ are independent edges, $f\notin E(\Theta_e)$.
Therefore, $2\leq |V(\Theta_e)\cap V(\Theta_f)|\leq 3$.

Suppose $|V(\Theta_e)\cap V(\Theta_f)|=2$ (say $V(\Theta_e)\cap V(\Theta_f)=\{w_1,w_2\}$).
If $\{w_1w_2\}=E(\Theta_e)\cap E(\Theta_f)$, then $w_1w_2\in E(\Theta_e)-xy$ and $w_1w_2\in E(\Theta_f)-ab$. By symmetry, let $u_2y=av_2$. Then $H=H_0$.
If $w_1w_2\notin E(\Theta_e)$ and $w_1w_2\in E(\Theta_f)$, then $\{w_1,w_2\}=\{u_1,u_2\}$ and $w_1w_2\in \{v_1b,ab\}$ by symmetry.
If $w_1w_2=v_1b$, then $H=H_1$; if $w_1w_2=ab$, then $H=H_3$.
If $w_1w_2\notin E(\Theta_e)$ and $w_1w_2\notin E(\Theta_f)$, then $\{w_1,w_2\}=\{u_1,u_2\}=\{v_1,v_2\}$ and hence $H=H_2$.

Suppose $|V(\Theta_e)\cap V(\Theta_f)|=3$. Then exact one of $x,y$ is in $V(\Theta_{ab})$, say $x\in V(\Theta_{ab})$ and $x=v_2$. By symmetry, either $\{u_1,u_2\}=\{a,b\}$ or $\{u_1,u_2\}=\{b,v_1\}$.
If $\{u_1,u_2\}=\{a,b\}$, then $H=H_5$; if $\{u_1,u_2\}=\{b,v_1\}$, then $H=H_4$.
\end{proof}

\begin{figure}[ht]
    \centering
    \includegraphics[width=420pt]{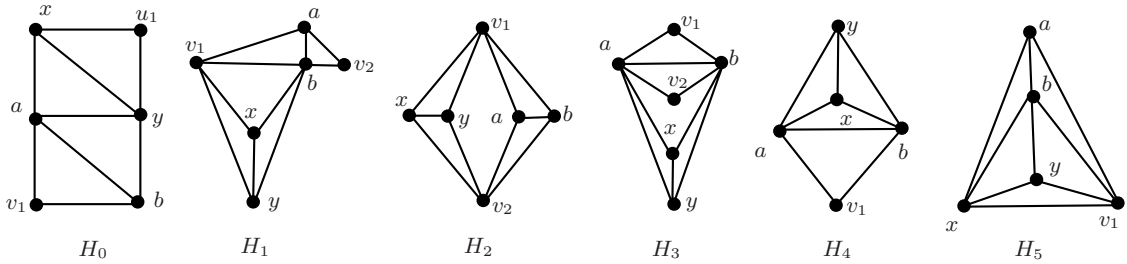}\\
    \caption{All possible plane graphs of $H$.} \label{Base=graph}
\end{figure}

\begin{remark}
If we use $\mathcal{F}_i$ to denote the set of all pseudo-faces of $H_i$, $i\in[0,5]$, then $\mathcal{F}_0=\{axu_1ybv_1a\}$,
$\mathcal{F}_1=\{v_1xbv_1,byv_1av_2b\}$, $\mathcal{F}_2=\{av_2yv_1a,bv_2xv_1b\}$, $\mathcal{F}_3=\{av_2bxa,av_1bya\}$, $\mathcal{F}_4=\{av_1bya\}$ and $\mathcal{F}_5=\{av_1xa,yv_1by\}$.
\end{remark}

Let $\ell^*$ denote the minimum integer of $[0,5]$ such that there are two independent edges of $E_I(G)$, say $e^*$ and $f^*$, such that $\Theta_{e^*}\cup \Theta_{f^*}$  is the plane graph $H_{\ell^*}$ when relabeling vertices (let us just say $\Theta_{e^*}\cup \Theta_{f^*}$ is  an $H_{\ell^*}$ for convenience).
In other word, for each independent edges $e',f'$ of $E_I(G)$, if $\Theta_{e'}\cup \Theta_{f'}$  is an $H_i$, then $i\geq \ell^*$.
We use $H^*$ to denote $\Theta_{e^*}\cup \Theta_{f^*}$.
Without loss of generality, let $e^*=xy$ and $f^*=ab$.

\begin{definition}\label{def-1}
Let $H_{max}$ be a maximal generating graph of $H^*$ and let $\widehat{H}_{max}$ be the plane subgraph induced by $V(H_{max})$.
Let $E_{out}=\{e\in E_I(G):V(e)\cap V(\widehat{H}_{max})=\emptyset\}$.
\end{definition}
It is obvious that if $e\in E(\widehat{H}_{max})-E(H_{max})$, then there is a pseudo-face $F$ of $H_{max}$ such that $e$ is an interior edge of $F$ and $V(e)\subseteq V(F)$.
By the definition of generating graph, we have  that $\partial(\widehat{H}_{max})\cap E_I(G)=\emptyset$, where $\partial(\widehat{H}_{max})$ is the set of edges of $G$ whose one endpoint is in $V(\widehat{H}_{max})$ and the other endpoint is in $V(G)-V(\widehat{H}_{max})$.
Therefore,
\begin{align}
|E_I(G)|=|E_I(G)\cap E(\widehat{H}_{max})|+|E_{out}|.
\end{align}

\begin{lemma}\label{lem-base-2}
If $\ell^*\in \{2,4,5\}$, then any generating graph of $H^*$ is itself.
\end{lemma}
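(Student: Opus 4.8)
The plan is to prove the statement by contradiction, reducing it to the non-existence of a single extending $\Theta$-graph. By the definition of a generating graph, $H^*$ fails to be maximal precisely when there is an interior edge $e=pq\in E_I(G)$ with $|V(e)\cap V(H^*)|=1$; say $p\in V(H^*)$ and $q\notin V(H^*)$. Let $pqr$ and $pqs$ be the two $3$-faces whose union is $\Theta_e$. So the whole argument reduces to showing that no such $e$ can exist when $\ell^*\in\{2,4,5\}$, since then the only generating graph of $H^*$ is $H^*$ itself.

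The engine of the proof is a location statement for the apexes $r,s$. Because $x,y,a,b$ are four distinct vertices and $p$ is a single vertex, $p$ is an endpoint of at most one of $e^*=xy$ and $f^*=ab$; hence $e$ is independent from at least one of them, say from $g\in\{e^*,f^*\}$. By Lemma~\ref{base}, $|V(\Theta_e)\cap V(\Theta_g)|\geq 2$, and since $q\notin V(H^*)\supseteq V(\Theta_g)$ I conclude that at least two of $p,r,s$ lie in $V(\Theta_g)\subseteq V(H^*)$; in particular at least one apex, say $r$, lies in $V(H^*)$. Thus the triangle $pqr$ is a $3$-cycle using only the single new vertex $q$.

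With this in hand I would run a case analysis organised by the attachment point (whether $p\in\{x,y,a,b\}$ or $p$ is one of the apexes of $H^*$) and by which apexes of $\Theta_e$ fall in $V(H^*)$. For each configuration I apply Lemma~\ref{lem-base-1} to the independent pair $(e,g)$ to identify $\Theta_e\cup\Theta_g$ as some $H_i$; minimality of $\ell^*$ then forces $i\geq\ell^*$, which pins down the relative embedding of $q,r,s$ against the explicit six- or five-vertex structure of $H_{\ell^*}$ read off from Figure~\ref{Base=graph} and the Remark. Two contradiction routes close each case: either I exhibit a $C_3\cup C_4$ outright, taking $pqr$ (or $pqs$) as the $C_3$ together with a disjoint $C_4$ inside the dense graph $H_{\ell^*}$; or, when no disjoint $C_4$ is available, I produce a pair of independent interior edges whose $\Theta$-union is some $H_j$ with $j<\ell^*$, contradicting the choice of $\ell^*$.

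The \textbf{main obstacle} is the five-vertex cases $\ell^*\in\{4,5\}$. There $H_{\ell^*}$ has only five vertices, so after deleting the three vertices $p,q,r$ of the new triangle at most two vertices of $H^*$ survive and the direct extraction of a disjoint $C_4$ fails. In these cases I must instead lean on minimality: using that $e$ is independent from \emph{both} $e^*$ and $f^*$ when $p$ is an apex, and pairing $\Theta_e$ with whichever of $\Theta_{e^*},\Theta_{f^*}$ makes the intersection pattern realise a forbidden $H_j$ with $j<\ell^*$. Carefully tracking which pseudo-face of $H^*$ the new vertex $q$ occupies, so that the claimed independent edges really do assemble into the lower-indexed $H_j$ rather than a legitimate $H_i$ with $i\geq\ell^*$, is the delicate bookkeeping that the formal proof has to carry out for every attachment point.
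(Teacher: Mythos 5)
Your strategy is exactly the one the paper uses: assume a generating graph properly extends $H^*$ via some $e=pq\in E_I(G)$ with $p\in V(H^*)$, $q\notin V(H^*)$; use Lemma~\ref{base} on $e$ paired with whichever of $e^*,f^*$ it is independent from to force the apexes of $\Theta_e$ into $V(H^*)$; then case on the attachment point $p$ and close each case either by exhibiting a $C_3\cup C_4$ or by producing an independent pair of interior edges whose $\Theta$-union is an $H_j$ with $j<\ell^*$, contradicting the choice of $\ell^*$. You have also correctly located the hard part: for $\ell^*\in\{4,5\}$ the host has only five vertices, so the direct $C_3\cup C_4$ extraction often fails and one must instead fall back on minimality of $\ell^*$.

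The genuine gap is that you never carry out the case analysis, and for this lemma the case analysis \emph{is} the proof. Nothing you have written rules out the possibility that some configuration admits neither a $C_3\cup C_4$ nor a lower-indexed $H_j$ --- that is precisely what must be checked, attachment point by attachment point, and it is not automatic. The paper's proof runs through each $z'$ in each pseudo-face of $H_2$, $H_4$ and $H_5$ (e.g.\ for $\ell^*=2$ it separately handles $z'=v_1$ and $z'=x$, splitting further on whether the apexes hit $\{x,b\}$, contain $v_2$, or land in the interior of the pseudo-face, with each branch ending in either an explicit $C_3\cup C_4$ such as $abv_2a\cup zxyv_1z$ or an $H_0$ or $H_1$); note also that in these explicit contradictions the new vertex typically sits on the $C_4$ with the $C_3$ drawn entirely inside $H^*$, rather than the new triangle $pqr$ serving as the $C_3$ as you suggest. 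Until you execute that bookkeeping --- including verifying which pseudo-face the new vertex occupies so that the claimed lower-indexed $H_j$ is actually realized as a plane subgraph --- the argument remains a plausible plan rather than a proof.
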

\begin{proof}
The proof proceeds by contradiction.
Suppose there is an edge $e'=zz'$ of $E_I(G)$ such that $z'\in V(H^*)$ and $z\notin V(H^*)$.
Let $V(\Theta_{e'})=\{z,z',c,c'\}$.

\setcounter{case}{0}
\begin{case}
$\ell^*=2$.
\end{case}

By symmetry, let $z'\in \{v_1,x\}$ and $e'$ be an interior edge of the face $xv_2bv_1x$ of $H_2$.

Suppose $z'=v_1$. Then $e'=v_1z$.
Since $e'$ and $xy$, as well as $e'$ and $ab$ are independent edges of $E_I(G)$, it follows from Lemma \ref{base} that $|V(\Theta_{e'})\cap V(\Theta_{xy})|\geq 2$ and $|V(\Theta_{e'})\cap V(\Theta_{ab})|\geq 2$.
Therefore, either $\{c,c'\}=\{x,b\}$ or $v_2\in\{c,c'\}$.
If the former holds, then $\Theta_{e'}\cup \Theta_{ab}$ is an $H_0$, which contradicts the maximality of $\ell^*$.
If the latter holds, without loss of generality, suppose $c'=v_2$.
Since $e'$ is an interior edge of the face $xv_2bv_1x$ of $H_2$, either $c\in\{x,b\}$ or $c$ is an interior vertex of the face $xv_2bv_1x$ of $H_2$.
If $c=x$ (resp. $c=b$), then $zx,zv_1\in E(G)$ (resp. $zb,zv_1\in E(G)$), and hence $abv_2a\cup zxyv_1z$ (resp. $xyv_2x\cup zbav_1z$) is a $C_3\cup C_4$ of $G$, a contradiction.
If $c$ is an interior vertex of the face $xv_2bv_1x$ of $H_2$, then $\Theta_{e'}\cup \Theta_{xy}$  is an $H_1$, which contradicts the minimality of $\ell^*$.

Suppose $z'=x$. Then $e'=xz$.
Since $e'$ and $ab$ are independent edges of $E_I(G)$ and $V(e')\cap V(\Theta_{ab})=\emptyset$, it follows from Lemma \ref{base} that $c,c'\in V(\Theta_{ab})$.
Since $a$ is an exterior vertex of the face $xv_2bv_1x$ of $H_2$, we have that $c,c'\in \{v_1,v_2,b\}$, and hence $\{c,c'\}\cap \{v_1,v_2\}\neq \emptyset$.
Without loss of generality, suppose $c'=v_1$, then $abv_2a\cup yxzv_1y$ is a $C_3\cup C_4$, a contradiction.

\begin{case}
$\ell^*=4$.
\end{case}

The face $av_1bya$  is the unique pseudo-face of $H_4$, and $e'$ is an interior edge of the face $av_1bya$ of $H_4$.
By symmetry, we can assume that $z'\in\{v_1,a,y\}$.

Suppose $z'=v_1$. Then $e'=v_1z$.
Since $e'$ and $xy$ are independent edges of $E_I(G)$ and $v_1,z\notin V(\Theta_{xy})$, it follow from Lemma \ref{base} that $c,c'\in V(\Theta_{x,y})$.
Furthermore, $c,c'\in \{a,b,y\}$.
By symmetry, we can assume that either $\{c,c'\}=\{a,b\}$ or $\{c,c'\}=\{a,y\}$.
If the former holds, then $\Theta_{xy}\cup \Theta_{e'}$  is an $H_2$;
if the latter holds, then $\Theta_{ab}\cup \Theta_{e'}$  is an $H_0$.
Both contradict the minimality of $\ell^*$.

Suppose $z'=a$. Then  $e'=xz$.
Since $a$ is an exterior vertex of the pseudo-face $av_1bya$, $a\notin\{c,c'\}$.
In fact, $b\notin \{c,c'\}$ also.
Otherwise, either $v_1$ or $x$ is an interior vertex of the $3$-face $abza$ of $G$, a contradiction.
Since $e'$ and $xy$ are independent edges of $E_I(G)$, we have that $y\in \{c,c'\}$ (say $y=c'$).
Then $\Theta_{e'}\cup \Theta_{xy}$  is an $H_0$, which contradicts the minimality of $\ell^*$.

Suppose $z'=y$. Then $e'=yz$.
Since $e'$ and $ab$ are independent edges of $E_I(G)$, we can get that $c,c'\in\{a,b,v_1\}$ by Lemma \ref{base}.
If $\{c,c'\}=\{a,b\}$, then $\Theta_{e'}\cup \Theta_{ab}$  is an $H_3$;
if $\{c,c'\}=\{a,v_1\}$ (resp. $\{c,c'\}=\{b,v_1\}$), then $ayza$ (resp. $byzb$) is a $3$-face of $G$ and hence $\Theta_{e'}\cup \Theta_{xa}$ (resp. $\Theta_{e'}\cup \Theta_{xb}$)  is an $H_0$.
Both contradict the minimality of $\ell^*$.

\begin{case}
$\ell^*=5$.
\end{case}

Recall that $bv_1yb,axv_1a$ are all pseudo-faces of $H$.
By symmetry, we can assume that $e'$ is an interior edge of the pseudo-face $axv_1a$ of $H_5$.
Thus, $z'\in\{a,x,v_1\}$.

Suppose $z'=a$. Then $e'=az$.
Since $e'$ and $xy$ are independent edges of $E_I(G)$ and $a,z\notin V(\Theta_{xy})$, we have that $c,c'\in V(\Theta_{xy})$.
Furthermore, we have that $\{c,c'\}=\{x,v_1\}$.
Then $\Theta_{e'}\cup \Theta_{xb}$  is an $H_0$, a contradiction.

Suppose $z'=x$. Then $e'=az$.
Since $e'$ and $ab$ are independent edges of $E_I(G)$, we have that at least one of $a,v_1$ belongs to $\{c,c'\}$.
If $\{c,c'\}=\{a,v_1\}$, then $xv_1\in E_I(G)$ and $\Theta_{xv_1}\cup \Theta_{ab}$  is an $H_3$, a contradiction.
If exact one of $a,v_1$ is in $\{c,c'\}$ (say $c'\in \{a,v_1\}$), then $c$ is an interior vertex of the face $axv_1a$ of $H_5$. Thus, $\Theta_{e'}\cup \Theta_{ab}$  is an $H_0$ if $c'=a$, and $\Theta_{e'}\cup \Theta_{ab}$  is an $H_1$ if $c'=v_1$, a contradiction.

Suppose $z'=v_1$. Then $e'=v_1z$.
Since $e'$ and $xy$ are independent edges of $E_I(G)$, then $x\in \{c,c'\}$ (say $x=c'$).
Thus, $xv_1\in E_I(G)$ and $\Theta_{xv_1}\cup \Theta_{ab}$  is an $H_3$, a contradiction.
\end{proof}

\subsection{generating graphs of $H^*$ when ${\ell}^*=0$}

Suppose $H^*=H_0$  and the labels of vertices are shown in
Figure \ref{Base=graph}.
See Figure \ref{H4}, each $H_{0,i}$, $i\in[5]$, is a generating graph of $H_0$. Moreover,
\begin{enumerate}
  \item $H_{0,1}=H_0\cup \Theta_{yz}$, where $V(\Theta_{yz})=\{c,z,b,y\}$;
  \item $H_{0,2}=H_0\cup \Theta_{yz}$, where $V(\Theta_{yz})=\{c,z,v_1,y\}$;
  \item $H_{0,3}=H_{0,2}\cup \Theta_{yp}$, where $V(\Theta_{yp})=\{p,q,b,y\}$;
  \item $H_{0,4}=H_{0,2}\cup \Theta_{yp}$, where $V(\Theta_{yp})=\{p,q,v_1,y\}$;
  \item $H_{0,5}=H_{0,3}\cup \Theta_{yw}$, where $V(\Theta_{yp})=\{y,u,w,v_1\}$.
\end{enumerate}
Note that $H_{0,1}$ has one pseudo-face $F_A^1=xav_1bzcyu_1x$, $H_{0,2}$ has two pseudo-faces $F_A^2=axu_1yczv_1a$ and $F_B^2=ybv_1y$, $H_{0,3}$ has two pseudo-faces $F_A^3=axu_1yczv_1a$ and $F_B^3=yqpbv_1y$, $H_{0,4}$ has two pseudo-faces $F_A^4=axu_1yczv_1a$ and $F_B^4=ybv_1pqy$, and $H_{0,5}$ has two pseudo-faces $F_A^5=axu_1yczv_1a$ and $F_B^5=yqpbv_1wuy$.
\begin{figure}[ht]
    \centering
    \includegraphics[width=350pt]{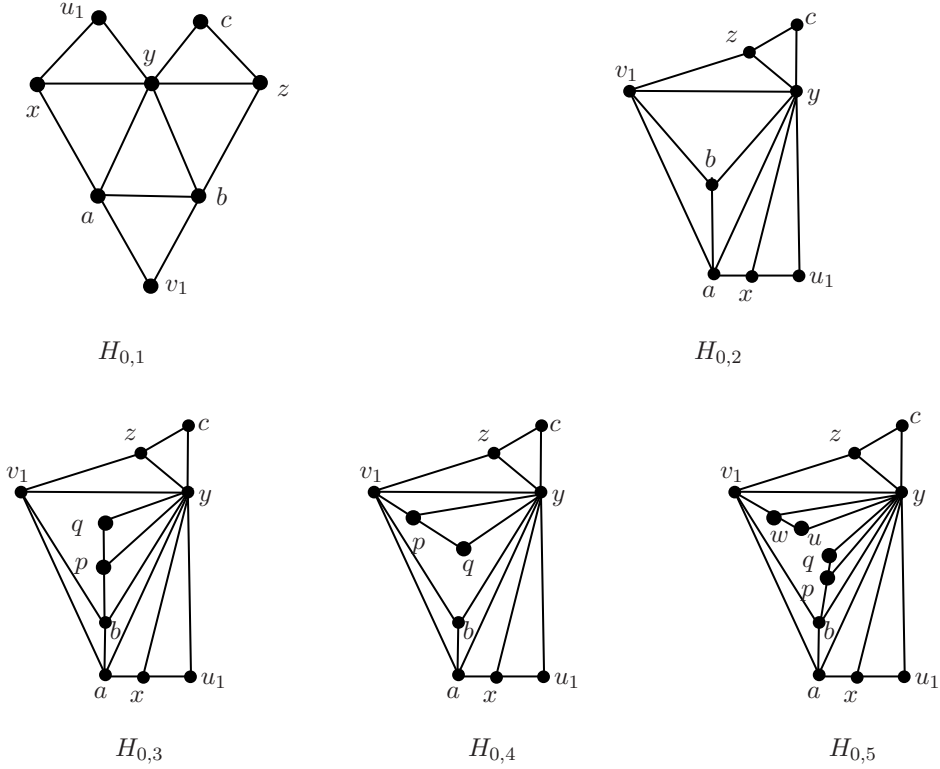}\\
    \caption{generating graphs of $H_0$.} \label{H4}
\end{figure}

We prove that $H_0$ and $H_{0,i}$, $i\in[5]$, are generating graphs of $H_0$ below.

\begin{lemma}\label{lem-H4-1}
Suppose $e=zz'$ is an edge of $E_I(G)$ $($say $V(\Theta_e)=\{z,z',c,c'\})$, $z'\in V(H_0)$ and $z\notin V(H_0)$.
Then $H_0\cup \Theta_{e}$ is a generating graph of $H_0$.
Moreover, $z'\in \{a,y\}$ and $H_0\cup \Theta_{e}$  is either $H_{0,1}$ or $H_{0,2}$.
\end{lemma}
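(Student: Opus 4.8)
The plan is to combine two ingredients throughout: the elementary intersection bound of Lemma~\ref{base}, and the fact that the three \emph{spine} edges $xy$, $ay$, $ab$ of $H_0$ each already lie on two $3$-faces of $G$, so no further face (in particular no triangle of $\Theta_e$) may contain them. First I would locate $z$: the four triangles $xu_1y$, $xay$, $ayb$, $abv_1$ are $3$-faces of $G$ and hence contain no vertex in their interior, so $z$ must lie in the unique pseudo-face $\mathcal{F}_0=axu_1ybv_1a$ of $H_0$; consequently $z'$ is one of the six hexagon vertices. Since $|V(e)\cap V(H_0)|=1$, the graph $H_0\cup\Theta_e$ is a generating graph of $H_0$ by definition, which settles the first assertion. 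It remains to pin down $z'$ and the apex pair $\{c,c'\}$. I would exploit the reflection $\sigma\colon x\leftrightarrow b,\ u_1\leftrightarrow v_1,\ y\leftrightarrow a$, an automorphism of the embedded graph $H_0$, to reduce the six candidate vertices to $z'\in\{x,u_1,y\}$.

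Because $\ell^*=0$ is already minimal, no ``smaller $H_i$'' contradiction is available here, so every forbidden configuration must be exhibited as an actual copy of $C_3\cup C_4$. For $z'\in\{x,u_1\}$ the edge $e$ is independent of $ab$, so Lemma~\ref{base} forces both apexes $c,c'$ into $V(\Theta_{ab})=\{a,v_1,b,y\}$ (and for $z'=u_1$, independence from $xy$ forces one of them into $\{a,y\}$; for $z'=x$ the saturated edge $xy$ rules out the apex $y$). In each of the finitely many resulting apex pairs I would produce a triangle and a vertex-disjoint $4$-cycle: the triangle is one of the two ``ear'' faces $xu_1y$ or $abv_1$, and the $4$-cycle runs through $z$ and edges of $H_0$, e.g.\ $C_4=zabv_1$ with $C_3=xu_1y$, or $C_4=zav_1b$ with $C_3=xu_1y$. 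This rules out $z'\in\{x,u_1\}$, and by $\sigma$ also $z'\in\{b,v_1\}$, leaving $z'\in\{a,y\}$.

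For $z'=y$, note first that the apex $a$ is impossible, since $ay$ already lies on the two $3$-faces $xay$ and $ayb$; Lemma~\ref{base} applied to the independent pair $e,ab$ then forces at least one apex into $\{v_1,b\}$. The remaining task is to show the other apex is new. I would eliminate the three ``both old'' pairs $\{b,v_1\}$, $\{b,u_1\}$, $\{v_1,u_1\}$ by a $C_3\cup C_4$: whenever $u_1$ is an apex the edges $u_1z$, $zy$ give $C_4=xu_1zy$ disjoint from $C_3=abv_1$, and for $\{b,v_1\}$ one takes $C_4=zbav_1$ disjoint from $C_3=xu_1y$. What survives is exactly one old apex, $b$ or $v_1$, together with one new apex $c$, and the planar rotation at $y$ forces these two configurations to be precisely $H_{0,1}$ (apexes $b,c$) and $H_{0,2}$ (apexes $v_1,c$, the new edge $yv_1$ cutting off the pseudo-face $ybv_1$). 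The case $z'=a$ is then handled verbatim under $\sigma$, yielding a relabeling of $H_{0,1}$ or $H_{0,2}$.

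The main obstacle is not any single step but the bookkeeping of the case analysis: one must (i) correctly separate which apex placements are forced by Lemma~\ref{base} from those excluded by edge saturation, and (ii) in each admissible placement select the \emph{right} disjoint pair $C_3,C_4$ — the systematic choice being an ear triangle ($xu_1y$ or $abv_1$) avoiding $z$ together with a $4$-cycle through $z$. A secondary point requiring care is verifying that the two surviving cases reproduce not merely the correct apex multiset but the exact plane subgraphs $H_{0,1},H_{0,2}$ of Figure~\ref{H4}, including their pseudo-faces.
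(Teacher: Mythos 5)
Your proposal is correct and follows essentially the same route as the paper's proof: reduce to one representative of each symmetry orbit of $z'$, constrain the apexes $c,c'$ via Lemma~\ref{base} together with the saturation of the spine edges $xy,ay,ab$, and eliminate every remaining placement except the two yielding $H_{0,1}$ and $H_{0,2}$ by exhibiting an explicit $C_3\cup C_4$. The only immaterial differences are your choice of orbit representatives ($\{x,u_1,y\}$ versus the paper's $\{u_1,y,b\}$) and the auxiliary interior edge used in the intersection bound ($ab$ where the paper sometimes invokes $\Theta_{ay}$).
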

\begin{proof}
By symmetry, we can assume that $z'\in\{u_1,y,b\}$.

Suppose $z'=u_1$. Then $e=u_1z$.
Note that $abv_1a\cup xyzu_1x$ is a $C_3\cup C_4$ of $G$ if $y\in\{c,c'\}$, and $abv_1a\cup xyu_1zx$ is a $C_3\cup C_4$ of $G$ if $x\in\{c,c'\}$.
Therefore, $x,y\notin \{c,c'\}$.
Since $ay$ and $u_1z$ are independent edges of $E_I(G)$ and $u_1,z\notin V(\Theta_{ay})$, it follows from Lemma \ref{base} that $c,c'\in V(\Theta_{ay})$.
Thus $\{c,c'\}=\{a,b\}$, which implies $za,zb\in E(G)$. Then $xyu_1x\cup zbv_1az$ is a $C_3\cup C_4$ of $G$, a contradiction.

Suppose $z'=b$. Then $e=bz$ and $a\notin\{c,c'\}$.
Otherwise, $abza$ is a $3$-face of $G$.
However, one of $v_1$ and $y$ is an interior vertex of the $3$-face $abza$, a contradiction.
Since $xy$ and $bz$ are  independent edges of $E_I(G)$ and $b,z\notin V(\Theta_{xy})$, it follows from Lemma \ref{base} that $c,c'\in V(\Theta_{xy})$.
Since $a\notin\{c,c'\}$, $c,c'\in\{x,y,u_1\}$.
If $\{c,c'\}=\{x,u_1\}$, then $zx,zu_1\in E(G)$ and hence $zxu_1z\cup aybv_1a$ is a $C_3\cup C_4$ of $G$, a contradiction.
If $\{c,c'\}=\{x,y\}$, then $zx,zy\in E(G)$ and hence $abv_1a\cup xu_1yzx$ is a $C_3\cup C_4$ of $G$, a contradiction.
If $\{c,c'\}=\{u_1,y\}$, then $zu_1,zy\in E(G)$ and hence $abv_1a\cup xu_1zyx$ is a $C_3\cup C_4$ of $G$, a contradiction.

Suppose $z'=y$. Then $e=yz$ and $a,x\notin\{c,c'\}$.
Since $ab$ and $yz$ are  independent edges of $E_I(G)$, it follows from Lemma \ref{base} that one of $c,c'$ belongs to $\{v_1,b\}$.
If $\{c,c'\}=\{v_1,b\}$, then $zbv_1z\cup xu_1yax$ is a $C_3\cup C_4$ of $G$, a contradiction.
Note that $u_1\notin \{c,c'\}$.
Otherwise,  $abv_1a\cup xyzu_1x$ is a $C_3\cup C_4$, a contradiction.
Therefore, one vertex of $\{c,c'\}$ belongs to $\{v_1,b\}$ and the other vertex is not in $V(H_0)$.
 Without loss of generality, suppose $c'\in\{v_1,b\}$ and $c\notin V(H_0)$.
If  $c'=b$, then $H_0\cup \Theta_e=H_{0,1}$; if $c'=v_1$, then $H_0\cup \Theta_e=H_{0,2}$.
\end{proof}

\begin{lemma}\label{lem-H4-2}
Let $H\in\{H_{0,1},H_{0,2}\}$.
Suppose $e=pp'$ is an edge of $E_I(G)$ $($say $V(\Theta_e)=\{p,p',q,q'\})$, $p'\in V(H)$ and $p\notin V(H)$.
Then $H\cup \Theta_{e}$ is a generating graph of $H$.
Moreover,
\begin{enumerate}
  \item if $H=H_{0,1}$, then $p'=y$ and $H_1\cup \Theta_{e}$  is $H_{0,3}$ when relabelling vertices;
  \item if $H=H_{0,2}$, then $p'=y$, $e$ is an interior edge of the face $F_B^2$ and $H_1\cup \Theta_{e}$  is  either $H_{0,3}$ or $H_{0,4}$.
\end{enumerate}
\end{lemma}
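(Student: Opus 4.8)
The plan is to mirror the case analysis of Lemma \ref{lem-H4-1}, exploiting the minimality of $\ell^*$ together with the $C_3\cup C_4$-freeness of $G$ to rule out every placement of the new $\Theta$-graph except those producing the claimed generating graphs. First I would fix $H\in\{H_{0,1},H_{0,2}\}$ and run over the possible attachment vertices $p'\in V(H)$, using symmetry to cut the list down. For each candidate $p'$, Lemma \ref{base} forces the two ``far'' vertices $q,q'$ of $\Theta_e$ to lie in $V(\Theta_{f})$ for every edge $f$ of $E_I(G)$ that is independent from $e$; since $H$ already contains the two independent edges $e^*=xy$ and $f^*=ab$ (and, in the case $H=H_{0,1}$, also $yz$ with $V(\Theta_{yz})=\{c,z,b,y\}$), this pins down $\{q,q'\}$ to a very short list of vertices of $H$. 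The bulk of the work is then to exhibit, for each forbidden placement, an explicit $C_3\cup C_4$ subgraph (one triangle from the $\Theta$-structure already present, one $4$-cycle through $z$ or $p$), exactly as in the proof of Lemma \ref{lem-H4-1}.

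Concretely, for $H=H_{0,1}$ the recorded pseudo-face is $F_A^1=xav_1bzcyu_1x$, so any interior edge $e=pp'$ must have $p'$ on this boundary; by symmetry I would reduce to checking $p'\in\{u_1,a,v_1,b,c,y\}$. For each choice I use that $q,q'$ must jointly meet both $V(\Theta_{xy})$ and $V(\Theta_{ab})$ (and $V(\Theta_{yz})$). In every case except $p'=y$ I expect to either (i) produce two disjoint cycles directly, or (ii) realize $\Theta_e$ together with one of the existing $\Theta$-graphs as an $H_i$ with $i<\ell^*=0$, which is vacuous here, so in fact for $\ell^*=0$ the contradictions will all be of type (i), i.e.\ genuine $C_3\cup C_4$'s. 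When $p'=y$ the only consistent option is $\{q,q'\}$ meeting $\{v_1,b\}$ with the remaining vertex new, giving $H_{0,1}\cup\Theta_e=H_{0,3}$ after relabelling; the alternative $c'$-type attachment must be excluded by a disjoint-cycle argument to force the branch producing $H_{0,3}$ rather than a spurious graph.

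For $H=H_{0,2}$ the two pseudo-faces are $F_A^2=axu_1yczv_1a$ and $F_B^2=ybv_1y$. I would first argue that $e$ cannot be interior to $F_A^2$: attaching there would, by Lemma \ref{base} and the presence of $\Theta_{xy}$ and $\Theta_{yz}$, either create a $C_3\cup C_4$ or realize an $H_1$ from $\Theta_e\cup\Theta_{ab}$, contradicting the minimality of $\ell^*$ in the same way the $z'=v_1$ subcase did in Lemma \ref{lem-base-2}. Hence $e$ is interior to $F_B^2$ and $p'\in\{y,b,v_1\}$; the placements $p'=b$ and $p'=v_1$ are eliminated by exhibiting a triangle--quadrilateral pair, leaving $p'=y$, and then the two admissible choices for the $v_1$-side versus $b$-side of the new triangle give exactly $H_{0,3}$ and $H_{0,4}$. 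The main obstacle I anticipate is bookkeeping: I must track which vertex of $\{q,q'\}$ is forced to be a specific old vertex and which is genuinely new, and confirm that the resulting attachment satisfies the single-shared-vertex condition $|V(e)\cap V(B_{i-1})|=1$ so that $H\cup\Theta_e$ is indeed a \emph{generating} graph rather than merely a plane subgraph. Getting the relabelling right so that the output matches the pictures in Figure \ref{H4} is where the argument is most error-prone, but it is routine once the forbidden cases are cleared.
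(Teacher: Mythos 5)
Your overall skeleton — enumerate the candidate attachment vertices $p'$, use Lemma \ref{base} to pin down $\{q,q'\}$, and kill the bad placements with explicit $C_3\cup C_4$'s — is the same as the paper's, and your final classification ($p'=y$; $H_{0,3}$ from $H_{0,1}$; $H_{0,3}$ or $H_{0,4}$ from $H_{0,2}$ via $F_B^2$) is correct. The paper gets the candidate list more cheaply by applying Lemma \ref{lem-H4-1} to the copies of $H_0$ sitting inside $H$, which yields $p'\in\{c,z,y,a\}$ at once; your full boundary enumeration is redundant but not wrong.

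There is, however, a genuine gap in your mechanism for excluding attachments interior to $F_A^2$ (and the analogous $H_{0,1}$ cases). You propose to derive a contradiction by ``realizing an $H_1$ from $\Theta_e\cup\Theta_{ab}$, contradicting the minimality of $\ell^*$,'' but this lemma lives in the subsection where $H^*=H_0$, i.e.\ $\ell^*=0$: finding a pair of independent edges realizing $H_1$ contradicts nothing, since $\ell^*$ is already the minimum possible value. You even note this vacuity in your $H_{0,1}$ discussion, yet you lean on it for $H_{0,2}$. The exclusions you need are not all ``type (i)'' disjoint-cycle contradictions either: in the paper, $p'=z$, $p'=a$, and $p'=y$ interior to $F_A^2$ are ruled out because Lemma \ref{base} forces $\{q,q'\}$ into a set that is impossible for plane-graph reasons (the edges $yz$, $ya$, $yv_1$, $yb$ already bound two $3$-faces each, so cannot bound a third), with only $p'=c$ yielding an explicit $C_3\cup C_4$, namely $xyu_1x\cup aczv_1a$. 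Your sketch supplies neither these face-incidence arguments nor substitutes for them, so the $F_A^2$ branch is not actually closed. A related slip: for $H=H_{0,1}$, $p'=y$, you allow $\{q,q'\}$ to meet $\{v_1,b\}$, but $b\in\{q,q'\}$ is impossible for the same face-incidence reason ($yb$ already lies on $yab$ and $yzb$); only $v_1$ survives, which is exactly what forces the outcome to be $H_{0,3}$ rather than a spurious graph.
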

\begin{proof}
By Lemma \ref{lem-H4-1}, we have that $p'\in \{c,z,y,a\}$.
Suppose $H=H_{0,1}$, then by symmetry, we have that $p'=y$. It is clear that $a,b,x,z\notin\{q,q'\}$.
Since $e$ and $ab$ are independent edges of $E_I(G)$, it follows that $v_1\in \{q,q'\}$ (say $v_1=q'$).
If $u_1=q$ or $c=q$, then $G$ has a $C_3\cup C_4$, a contradiction.
Thus, $c\notin V(H_{0,1})$ and the resulting graph is $H_{0,3}$ when relabelling vertices.

Suppose $H=H_{0,2}$. Recall that $p'\in \{c,z,y,a\}$.
If $p'\in \{z,c\}$, then $e$ is an interior edge of $F_A^2$.
Since $e$ and $xy$, as well as $e$ and $ab$, are independent edges of $E_I(G)$, we have
$\{q,q'\}\in \{a,y\}$.
If $p'=z$, then $y\notin \{q,q'\}$, a contradiction;
if $p'=c$, then $ca\in E(G)$ and $xyu_1x\cup aczv_1a$ is a $C_3\cup C_4$ of $G$, a contradiction.
Thus, assume that $p'\in\{a,y\}$.
If $p'=a$, then $e$ is an interior edge of $F_A^2$. Since $e$ and $yz$, as well as $e$ and $xy$ are independent edges of $E_I(G)$, we have
$y\in\{q,q'\}$, a contradiction.
If $p'=y$ and $e$ is an  interior edge of $F_A^2$, then $v_1,a,b\notin\{q,q'\}$.
However, since $e$ and $ab$ are independent edges of $E_I(G)$, we have that $|V(\Theta_{ab})\cap V(\Theta_e)|\geq 2$ by Lemma \ref{base}, a contradiction.
Now we assume that $e$ is an interior edge of $F_B^2$.
Since $e$ and $ab$ are independent edges of $E_I(G)$, it follows that at least one of
$q,q'$ belongs to $\{v_1,b\}$ (say $q'\in \{v_1,b\}$).
If $\{q,q'\}=\{v_1,b\}$, then $czyz\cup pbav_1p$ is a $C_3\cup C_4$ of $G$, a contradiction.
If $v_1\in \{q,q'\}$ and $b\notin \{q,q'\}$, then  the resulting plane graph is $H_{0,4}$.
If $b\in \{q,q'\}$ and $v_1\notin \{q,q'\}$, then the resulting plane graph is $H_{0,3}$.
\end{proof}
\begin{lemma}\label{lem-H4-3}
If $H\in\{H_{0,3},H_{0,4}\}$, then any generating graph of $H$ is either itself or $H_{0,5}$.
In addition, any generating graph of $H_{0,5}$ is itself.
\end{lemma}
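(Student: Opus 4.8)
The plan is to follow the incremental case analysis of Lemmas \ref{lem-H4-1} and \ref{lem-H4-2}. A generating graph of $H$ is built by attaching one $\Theta$-graph at a time, each new interior edge meeting the current graph in exactly one vertex, so it suffices to understand a single attachment step $H\mapsto H\cup\Theta_e$ with $e=pp'$, $V(e)\cap V(H)=\{p'\}$ and $V(\Theta_e)=\{p,p',q,q'\}$, and then iterate. I will show that for $H\in\{H_{0,3},H_{0,4}\}$ the only admissible step yields $H_{0,5}$ (up to relabelling), and that $H_{0,5}$ admits no admissible step; the statement about all generating graphs then follows by iteration.

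Two standing constraints are available at every step. First, Lemma \ref{base}: $e$ is independent from each generating edge not incident to $p'$ (among $xy,ab,yz,yp$, and also $yw$ for $H_{0,5}$), hence $|V(\Theta_e)\cap V(\Theta_f)|\ge 2$ for each such $f$; because $p$ and at least one of $q,q'$ avoid $V(H)$, this confines the old vertices of $\Theta_e$ to a short list in every case. Second, planarity: $p$ is interior to the pseudo-face $F\in\{F_A,F_B\}$ whose boundary contains $p'$, so the triangles $p'qp$ and $p'q'p$ must be insertable into the angular gap of $F$ at $p'$; thus $q,q'$ are either new or lie on $\partial F$, and an old side vertex $s$ can serve at $p'$ only if the edge $p's$ still has a side facing into $F$ (a ``free side'').

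The engine producing contradictions is the $C_3\cup C_4$-free hypothesis, with the unique non-$y$ triangle $av_1b$ in the starring role. For an attachment at $y$, the new vertex $p$ together with an available boundary-neighbour $s$ of $y$ and a fan-neighbour $s'$ of $s$ that is adjacent to $y$ closes a $4$-cycle on $\{p,s,s',y\}$; this $4$-cycle is disjoint from $av_1b$ as soon as $s\notin\{a,v_1,b\}$ (the relevant $s'$ then also avoiding the triangle), so such an attachment is forbidden unless $s$ is the unique vertex of $\{a,v_1,b\}$ whose edge to $y$ still has a free side facing a pseudo-face, namely $v_1$ for $H_{0,3}$ and $b$ for $H_{0,4}$. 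Attaching through that vertex is the only surviving step, and one checks it equals $H_{0,5}$ up to relabelling; a direct inspection shows it closes no $4$-cycle avoiding $v_1$ (the two new vertices have too few incidences) and no $4$-cycle avoiding $y$, so it creates no $C_3\cup C_4$. In $H_{0,5}$ none of the boundary-neighbours of $y$ in either pseudo-face lies in $\{a,v_1,b\}$, so every attachment at $y$ closes a $4$-cycle disjoint from $av_1b$; this is precisely why $H_{0,5}$ is terminal. Finally, attachments at a boundary vertex $p'\neq y$, and all attachments into the ``locked'' face $F_A$, are eliminated by the first two constraints alone: the old side vertices forced by Lemma \ref{base} sit across used triangles and cannot be reached from $p$ inside $F$, a planarity contradiction.

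The main obstacle is the volume and bookkeeping of the case analysis rather than a single hard idea. Each pseudo-face has several boundary vertices, each choice of $p'$ leaves several candidate pairs $\{q,q'\}$, and the cases must be organised by the reflection symmetry of $H_0$ (the involution $x\leftrightarrow b$, $u_1\leftrightarrow v_1$, $y\leftrightarrow a$ already used in Lemma \ref{lem-H4-1}) to keep their number finite and the argument honest. The two most delicate points are the planarity step that a freshly added vertex can only ``see'' the boundary of its own pseudo-face, which forbids long-range chords and thereby pins down the side vertices, and the verification for the surviving step to $H_{0,5}$ that no $4$-cycle of the forbidden type exists under any pairing; this is the one place where one must prove the absence of a configuration rather than exhibit one.
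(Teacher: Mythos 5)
Your proposal takes essentially the same route as the paper: reduce to a single attachment step $H\mapsto H\cup\Theta_e$, use Lemma \ref{base} and planarity to confine the old vertices of $\Theta_e$ to the pseudo-face $F_B$, conclude that the only admissible attachment is at $y$ with side vertex $v_1$ (for $H_{0,3}$) or $b$ (for $H_{0,4}$) yielding $H_{0,5}$, and show $H_{0,5}$ admits none. The only real difference is in one sub-step: where you eliminate unwanted side vertices by exhibiting a $4$-cycle through the new vertex that is disjoint from the triangle $av_1b$, the paper instead inherits the restriction $w'\in\{y,p,q\}$ from Lemma \ref{lem-H4-2} (since $H_{0,2}$ is a subgraph of $H_{0,3}$ and $H_{0,4}$) and lets Lemma \ref{base} applied to $ab$, together with the fact that both triangles at $ya$ and $yb$ (resp.\ $ya$ and $yv_1$) are already present, force the side vertex to be $v_1$ (resp.\ $b$) -- your explicit $C_3\cup C_4$ constructions are a valid, if more laborious, substitute.
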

\begin{proof}
Fix $i\in\{3,4\}$ and let $H=H_{0,i}$.
Suppose $e=ww'$ is an edge of $E_I(G)$ (say $V(\Theta_e)=\{w,w',u,u'\}$), $w'\in V(H)$ and $w\notin V(H)$.
We prove $H\cup \Theta_e$ is $H_{0,5}$ below.
Note that $H_{0,2}$ is a plane subgraph of $H_{0,3}$ and $H_{0,4}$.
By Lemma \ref{lem-H4-2}, $w'\in\{y,p,q\}$ and $e$ is an interior edge of $F_B^i$.
If $w'\in\{p,q\}$, then  $e$ and $yx$ are independent edge of $E_I(G)$, and $|V(\Theta_e)\cap V(\Theta_{yx})|\geq 2$ by Lemma \ref{base}.
However, $|V(\Theta_e)\cap V(\Theta_{yx})|\leq 1$, a contradiction.
Thus, $w'=y$ and $e=yw$.
If $i=3$ (resp. $i=4$), then since $e$ and $ab$ are independent edges of $E_I(G)$, we have that $v_1\in\{u,u'\}$ (resp. $b\in\{u,u'\}$). Without loss of generality, say $u'=v_1$ (resp. $u'=b$). Then $w$ is an interior vertex of $F_B^3$ (resp. $F_B^4$), and the resulting graphs is $H_{0,5}$ (resp. the resulting graphs is $H_{0,5}$ when relabelling some vertices).

Now we prove that any generating graph of $H_{0,5}$ is itself.
Otherwise, there is an edge $f=vv'$ of $E_I(G)$ (say $V(\Theta_e)=\{v,v',d,d'\}$), such that $v'\in V(H_{0,5})$ and $v\notin V(H_{0,5})$.
Then $e$ is an interior edge of $F_B^5$ and $v'\in\{p,q,u,w,y\}$.
If $v'\in\{p,q,u,w\}$, then since $e$ and $yx$ are independent edges of $E_I(G)$, we can get a contradiction by Lemma \ref{base}.
If $v'=y$, then since $e$ and $ab$ are independent edges of $E_I(G)$, we have that $|V(\Theta_e)\cap \{v_1,a,b\}|\geq 1$.
However, $v_1,a,b\notin \{d,d'\}$, a contradiction.
\end{proof}

By Lemmas \ref{lem-H4-1}, \ref{lem-H4-2} and \ref{lem-H4-3}, we have the following result.

\begin{observation}\label{obser-H0}
If $H^*=H_0$ and $H$ is a generating graph of $H^*$, then $H\in \{H_0,H_{0,1},\ldots,H_{0,5}\}$.
\end{observation}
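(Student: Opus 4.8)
The plan is to prove the Observation by tracing a generating sequence for $H$ step by step and invoking Lemmas \ref{lem-H4-1}, \ref{lem-H4-2} and \ref{lem-H4-3} to constrain each addition. Since $H$ is a generating graph of $H^{*}=H_{0}$, there is a sequence $B_{0}=H_{0},B_{1},\ldots,B_{t}=H$ with $B_{i}=B_{i-1}\cup\Theta_{e_{i}}$ and $|V(e_{i})\cap V(B_{i-1})|=1$ for each $i$; after possibly truncating I may assume $H=B_{t}$. The point to notice first is that the single-step condition $|V(e_{i})\cap V(B_{i-1})|=1$ is precisely the hypothesis ``$z'\in V(H)$ and $z\notin V(H)$'' under which each of the three lemmas is stated, so every individual step $B_{i-1}\to B_{i}$ is an instance already analysed. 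Throughout I would treat the graphs $H_{0},H_{0,1},\ldots,H_{0,5}$ as isomorphism types, which is consistent with the ``relabelling vertices'' phrasing of the lemmas.

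I would then distinguish according to the length $t$. If $t=0$ then $H=H_{0}$. If $t\geq 1$, Lemma \ref{lem-H4-1} applied to the first step gives $B_{1}\in\{H_{0,1},H_{0,2}\}$, settling the case $t=1$. If $t\geq 2$, Lemma \ref{lem-H4-2} applied to the step $B_{1}\to B_{2}$ gives $B_{2}\in\{H_{0,3},H_{0,4}\}$, settling $t=2$. Finally, for $t\geq 3$ the tail $B_{2},B_{3},\ldots,B_{t}$ is itself a valid generating sequence, so $H=B_{t}$ is a generating graph of $B_{2}\in\{H_{0,3},H_{0,4}\}$; Lemma \ref{lem-H4-3} then forces $H\in\{H_{0,3},H_{0,4},H_{0,5}\}$ in one stroke (that lemma already covers generating graphs of arbitrary length, and it also records that any generating graph of $H_{0,5}$ is $H_{0,5}$ itself). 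Collecting the cases yields $H\in\{H_{0},H_{0,1},\ldots,H_{0,5}\}$.

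Because the three lemmas carry all the combinatorial weight, the remaining work is essentially bookkeeping, and the only point I would treat with care---the main obstacle, such as it is---is verifying that each prefix $B_{i}$ really is isomorphic to one of the six listed graphs before applying the next lemma, so that no generating sequence can jump outside the chain $H_{0}\to\{H_{0,1},H_{0,2}\}\to\{H_{0,3},H_{0,4}\}\to\{H_{0,3},H_{0,4},H_{0,5}\}$ or produce an unlisted intermediate graph. A pleasant feature of the setup is that Lemma \ref{lem-H4-3} is phrased for generating graphs of any length, so I do not need a genuine induction beyond the first two steps: two applications of Lemmas \ref{lem-H4-1} and \ref{lem-H4-2} bring the sequence into $\{H_{0,3},H_{0,4}\}$, after which Lemma \ref{lem-H4-3} closes off every remaining possibility at once.
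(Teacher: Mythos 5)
Your proposal is correct and matches the paper exactly: the paper derives the observation directly from Lemmas \ref{lem-H4-1}, \ref{lem-H4-2} and \ref{lem-H4-3}, and your write-up merely makes explicit the routine bookkeeping (truncating the generating sequence, applying the three lemmas to successive steps, and treating the $H_{0,i}$ as isomorphism types) that the paper leaves implicit.
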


\subsection{generating graphs of $H^*$ when ${\ell}^*=1$}

Suppose $H^*=H_1$.
Then $G$ does not contain $H_0$ as a plane subgraph.
Note that $H_1$ has two pseudo-faces, say $A=ybv_2av_1y$ and $B=xbv_1x$.

\begin{figure}[ht]
    \centering
    \includegraphics[width=380pt]{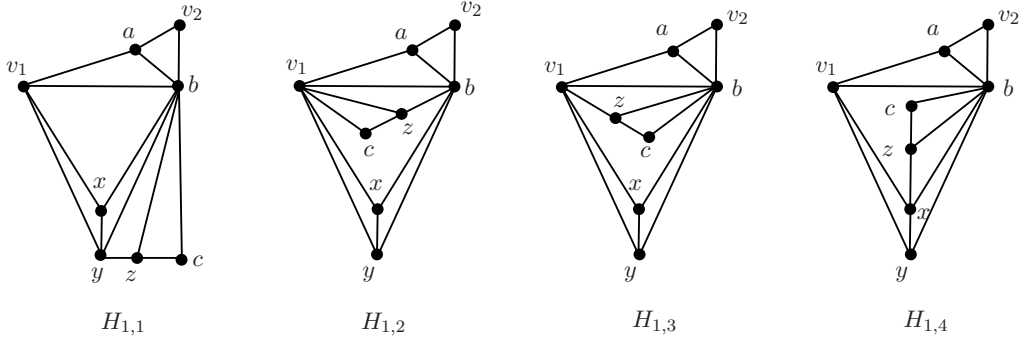}\\
    \caption{generating graphs of $H_1$.} \label{H-1-i}
\end{figure}

\begin{lemma}\label{lem-H11-1}
Suppose $e=zz'$ is an edge of $E_I(G)$ $($say $V(\Theta_e)=\{z,z',c,c'\}$$)$, $z'\in V(H_1)$ and $z\notin V(H_1)$.
Then the generating graph $H_1\cup \Theta_e$   is  $H_{1,3}$ $($see Figure \ref{H-1-i}$)$.
In addition, any generating graph of $H_{1,3}$ is itself.
\end{lemma}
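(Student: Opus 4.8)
The plan is to follow the template of Lemmas \ref{lem-H4-1}--\ref{lem-H4-3}. Since $z\notin V(H_1)$ and $e\in E_I(G)$, the edge $e$ cannot lie on the boundary of $H_1$, so both $3$-faces of $\Theta_e$ must sit inside a single pseudo-face of $H_1$; hence $z'$ is a boundary vertex of either $A=ybv_2av_1y$ or $B=xbv_1x$, the vertex $z$ is interior to that pseudo-face, and the two apexes $c,c'$ of $\Theta_e$ lie in the closure of that pseudo-face. The engine of the proof is the interplay of three facts: Lemma \ref{base}, which (applied to $e$ paired with whichever of $xy,ab$ is independent of it) forces $\{c,c'\}$ into a short list of vertices of $\Theta_{xy}$ or $\Theta_{ab}$; the hypothesis that $G$ is $C_3\cup C_4$-free; and the minimality of $\ell^*$, which forbids any two independent edges of $E_I(G)$ from spanning an $H_0$.

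I would organise the argument as a case analysis on the vertex $z'$. Note first that $H_1$ has essentially no embedding symmetry (the vertices $b,v_1,v_2$ have pairwise distinct degrees, and the only nontrivial graph automorphism, $x\leftrightarrow y$, does not fix the outer face), so the possible choices of $z'$ must be treated more or less directly rather than collapsed by symmetry. For each $z'$ and each admissible placement of $\{c,c'\}$, I would check the two failure modes: either two incident triangles plus an existing edge of $H_1$ close up into a $4$-cycle disjoint from one of the genuine $3$-faces (for instance $xzv_1yx$ together with $av_2b$ when one tries to attach at $x$ inside $B$ with $\{c,c'\}=\{v_1,b\}$), producing a forbidden $C_3\cup C_4$; or $\Theta_e$ together with $\Theta_{xy}$ or $\Theta_{ab}$ realises an $H_0$, contradicting the choice of $\ell^*=1$. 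After discarding every inconsistent placement, the configurations that survive are all isomorphic, after relabeling, to the graph $H_{1,3}$ of Figure \ref{H-1-i}; this is the first assertion.

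For the maximality statement I would rerun the same three-tool analysis on the pseudo-faces of $H_{1,3}$: a further interior edge $f=vv'$ with $v\notin V(H_{1,3})$ would again have to lie in one of these pseudo-faces, and in each position Lemma \ref{base} combined with $C_3\cup C_4$-freeness (and, where those do not suffice, the absence of $H_0$) yields a contradiction, so no generating graph properly contains $H_{1,3}$.

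I expect the main difficulty to be the $5$-face $A$, whose boundary has five vertices and which admits several internal chords; there the list of apex pairs permitted by Lemma \ref{base} is longest, and one must pin down, for each pair, an explicit $C_3\cup C_4$ or $H_0$ that kills it. A secondary but genuine subtlety is that a pseudo-face of $H_1$ need not be a face of $G$, so some placements introduce a \emph{new} interior apex; one must verify that all such surviving placements still collapse to the single isomorphism type $H_{1,3}$, and that the claim that both $3$-faces of $\Theta_e$ occupy the same pseudo-face is never violated, so that no configuration is overlooked.
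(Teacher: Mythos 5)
Your plan coincides with the paper's proof: the paper likewise splits on whether $e$ lies inside the pseudo-face $A=ybv_2av_1y$ or $B=xbv_1x$, runs through each boundary vertex $z'$, and eliminates every placement of $\{c,c'\}$ permitted by Lemma \ref{base} either by exhibiting an explicit $C_3\cup C_4$ or by producing an $H_0$ contradicting the minimality of $\ell^*$ (your sample case at $x$ inside $B$ with $\{c,c'\}=\{v_1,b\}$ is exactly the paper's), leaving only the attachment at $b$ inside $B$ with apexes $v_1$ and a new interior vertex, i.e.\ $H_{1,3}$, and then reruns the same analysis on the pseudo-faces of $H_{1,3}$ for the maximality claim. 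The one caveat is that your write-up defers the case-by-case verification itself, which is where essentially all the content of the lemma lives.
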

\begin{proof}

We first show that $H_1\cup \Theta_e$  is  $H_{1,3}$.

\setcounter{case}{0}
\begin{case}
$e=zz'$ is an interior edge of $A$.
\end{case}

It is obvious that $c,c'$ are not exterior vertices of $A$.
Since $e=zz'$ is an interior edge of $A$ and $z'\in V(H_1)$, it follows that $z'\in \{v_2,a,v_1,y,b\}$.

Suppose $z'=v_2$. By Lemma \ref{base}, $c,c'\in V(\Theta_{xy})$ since $zz'$ and $xy$ are independent edges of $E_I(G)$.
Since $x$ is an exterior vertex of $A$, it follows that $x\notin \{c,c'\}$.
Therefore, we have that $c,c'\in \{y,v_1,b\}$.
If $v_1\in \{c,c'\}$, then $v_1z\in E(G)$ and hence $xybx\cup v_1av_2zv_1$ is a $C_3\cup C_4$, a contradiction.
Thus, $\{c,c'\}=\{y,b\}$, which implies that $zb\in E(G)$.
Note that $xyv_1x\cup bav_2zb$ is a $C_3\cup C_4$, a contradiction.

Suppose $z'=a$. Then $b,x\notin \{c,c'\}$.
Since $az$ and $xy$ are independent edges of $E_I(G)$, by Lemma \ref{base}, $\{c,c'\}=\{v_1,y\}$ and hence $zv_1,zy\in E(G)$.
Therefore, $abv_2a\cup zv_1xyz$ is a $C_3\cup C_4$, a contradiction.

Suppose $z'=v_1$. Then $b,x\notin \{c,c'\}$. Since $v_1z$ and $xy$ are independent edge of $E_I(G)$, by Lemma \ref{base}, $|V(\Theta_{v_1z})\cap V(\Theta_{xy})|\geq 2$.
Hence $y\in \{c,c'\}$ and $zy\in E(G)$.
So, $abv_2a\cup yxv_1zy$ is a $C_3\cup C_4$ of $G$, a contradiction.

Suppose $z'=y$. Since $yz$ and $ab$ are independent edge of $E_I(G)$, by Lemma \ref{base}, $c,c'\in\{a,b,v_1,v_2\}$.
If $v_1\in\{c,c'\}$, then $abv_2a\cup yxv_1zy$ is  a $C_3\cup C_4$ of  $G$, a contradiction.
If $v_2\in \{c,c'\}$, then $v_2y,v_2z\in E(G)$ and hence $zyv_2z\cup xv_1abx$ is a $C_3\cup C_4$, a contradiction.
Therefore, $\{c,c'\}=\{a,b\}$, which implies that $zb,za\in E(G)$.
Hence $xyv_1x\cup zav_2bz$ is a $C_3\cup C_4$, a contradiction.

Suppose $z'=b$. Then $a,v_1,x\notin \{c,c'\}$.
Since $bz$ and $xy$ are independent edges of $E_I(G)$, by Lemma \ref{base}, $y\in\{c,c'\}$.
Without loss of generality, suppose $c'=y$.
If $c=v_2$, then $xyv_1x\cup bzv_2ab$ is a $C_3\cup C_4$ of $G$, a contradiction.
Therefore,  $c$ is an interior vertex of $A$ and $H_1\cup \Theta_{zb}=H_{1,1}$ (see Figure \ref{H-1-i}).
However, $\Theta_{xy}\cup \Theta_{zb}$ is an $H_0$, a contradiction.

\begin{case}
$e=zz'$ is an interior edge of $B$.
\end{case}

Suppose $z'=x$. Since $xz$ and $ab$ are independent edges of $E_I(G)$, by Lemma \ref{base}, $c,c'\in \{v_1,a,v_2,b\}$.
Note that $a$ and $v_2$ are exterior vertices of $B$.
So, $\{c,c'\}=\{v_1,b\}$ and $abv_2a\cup yxzv_1y$ is a $C_3\cup C_4$ of $G$, a contradiction.

Suppose $z'=v_1$. Since $v_1z$ and $ab$ are independent edges of $E_I(G)$, it follows that $b\in\{c,c'\}$ (without loss of generality, suppose $b=c'$).
If $c=x$, then there is a $C_3\cup C_4$ in $G$, a contradiction.
Thus, $c$ is an interior vertex of $B$.
Therefore, $H_1\cup V(\Theta_e)=H_{1,2}$ (see Figure \ref{H-1-i}).
However, $\Theta_{v_1z}\cup \Theta_{ab}$ is an $H_0$ of $H_{1,2}$, a contradiction.

Suppose $z'=b$. Since $bz$ and $xy$ are independent edges of $E_I(G)$, it follows that either $x\in \{c,c'\}$ or $v_1\in\{c,c'\}$.
Note that $\{x,v_1\}\neq \{c,c'\}$.
Otherwise there is a $C_3\cup C_4$, a contradiction.
If $x\in \{c,c'\}$ (say $x=c'$), then $c$ is an interior vertex of $B$, and hence $H_1\cup \Theta_e=H_{1,4}$ (see Figure \ref{H-1-i}).
However, $\Theta_{xy}\cup \Theta_{bz}$ is an $H_0$, a contradiction.
Thus, $v_1\in \{c,c'\}$ (say $v_1=c'$) and $c$ is an interior vertex of $B$, and hence $H_1\cup \Theta_e=H_{1,3}$.

By above discussion, $H_1\cup \Theta_e$  is an $H_{1,3}$.
Now we prove that any generating graph of $H_{1,3}$ is itself.
Suppose to the contrary that there is an edge of  $E_I(G)$, say $pp'$, such that $p'\in V(H_{1,3})$ and $p\notin V(H_{1,3})$.
Let $V(\Theta_{pp'})=\{p,p',q,q'\}$.
Then $pp'$ is an interior edge of the face $bxv_1zcb$ of $H_{1,3}$ and $p'\in\{z,c,b\}$.
If $p'\in\{z,c\}$, then $pp'$ and $xy$, as well as $pp'$ and $ab$, are independent edges of $E_I(G)$.
Since $p,p'\notin V(\Theta_{xy})$ and $p,p'\notin V(\Theta_{ab})$, it follow from that $\{q,q'\}=V(\Theta_{xy})\cap V(\Theta_{ab})=\{v_1,b\}$.
Since $b\notin \{q,q'\}$ when $p'=z$, we have that $p'=c$ and hence $pv_1\in E(G)$.
Thus, $abv_2a\cup pczv_1p$ is a $C_3\cup C_4$ of $G$, a contradiction.
Therefore, $p'=b$ and hence $v_1,z\notin \{q,q'\}$.
Since $pp'$ and $xy$ are independent edges of $E_I(G)$, it follows that $x\in \{q,q'\}$.
Therefore, $\Theta_{xy}\cup \Theta_{pp'}$ is an $H_0$, a contradiction.
\end{proof}

By Lemma \ref{lem-H11-1}, we have the following result.
\begin{observation}\label{obser-H1}
If $H^*=H_1$ and $H$ is a generating graph of $H^*$, then either $H=H_1$ or $H=H_{1,3}$.
\end{observation}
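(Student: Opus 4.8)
The plan is to read the result off from Lemma \ref{lem-H11-1} by unwinding the definition of a generating graph one step at a time. Recall that a generating graph $B_j$ of $B_0=H^*=H_1$ is produced as a chain $B_i=B_{i-1}\cup\Theta_{e_i}$ in which every $e_i\in E_I(G)$ satisfies $|V(e_i)\cap V(B_{i-1})|=1$. I would therefore argue by induction on the length $t$ of this chain. If $t=0$, the generating graph is $B_0=H_1$ and there is nothing to prove, which yields the first alternative of the statement.

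For $t\geq 1$ I would inspect the very first appended edge $e_1=zz'$. Its defining condition $|V(e_1)\cap V(B_0)|=1$ forces exactly one endpoint, say $z'$, to lie in $V(H_1)$, with $z\notin V(H_1)$. This is precisely the hypothesis of Lemma \ref{lem-H11-1}, so that lemma immediately gives $B_1=H_1\cup\Theta_{e_1}=H_{1,3}$ after relabeling vertices. Hence after a single extension step the generating graph is already $H_{1,3}$.

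To finish, I would invoke the second assertion of Lemma \ref{lem-H11-1}, namely that any generating graph of $H_{1,3}$ is itself. Concretely this says that no edge $e_2\in E_I(G)$ has exactly one endpoint in $V(B_1)=V(H_{1,3})$, so the chain cannot be prolonged beyond $B_1$. Consequently $t=1$ and the generating graph equals $H_{1,3}$, which is the second alternative; combining the two cases proves the Observation.

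The only delicate point, and the step I expect to be the main (mild) obstacle, is a bookkeeping one: I must confirm that the copy of $H_{1,3}$ produced by the first extension really is the configuration $H_{1,3}$ figuring in the second assertion of Lemma \ref{lem-H11-1} (up to the vertex relabeling already built into that lemma), so that its non-extendability transfers verbatim. Once this identification is made, no additional case analysis is needed, since all of the substantive work --- excluding the outcomes $H_{1,1}$, $H_{1,2}$, $H_{1,4}$ and forcing termination at $H_{1,3}$ --- has already been discharged inside Lemma \ref{lem-H11-1}.
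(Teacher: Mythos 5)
Your proposal is correct and matches the paper's intent exactly: the paper derives Observation \ref{obser-H1} directly from Lemma \ref{lem-H11-1}, and your induction on the length of the generating chain is just the explicit unwinding of that citation (first extension forced to be $H_{1,3}$, second assertion of the lemma blocking any further extension). No substantive difference from the paper's argument.
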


\subsection{generating graphs of $H^*$ when ${\ell}^*=3$}

Suppose $H^*=H_3$.
Then $G$ does not contain $H_0,H_1$ or $H_2$ as plane subgraphs.
\begin{figure}[ht]
    \centering
    \includegraphics[width=130pt]{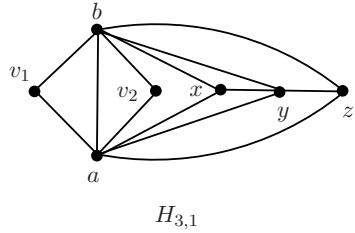}\\
    \caption{An generating graph $H_3$.} \label{H31}
\end{figure}

\begin{lemma}\label{lem-H3}
Each generating graph of $H_3$ is either itself or  $H_{3,1}$ $($see Figure \ref{H31}$)$.
Moreover, any generating graph of $H_{3,1}$ is itself.
\end{lemma}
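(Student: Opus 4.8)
The plan is to follow exactly the template established in Lemmas \ref{lem-base-2}, \ref{lem-H11-1}, and \ref{lem-H4-1}--\ref{lem-H4-3}: argue by contradiction, introduce a candidate growth edge $e=zz'$ of $E_I(G)$ with $z'\in V(H_3)$ and $z\notin V(H_3)$, and run through the possible positions of $z'$ case by case, in each case either forcing an $H_i$ with $i<3$ (contradicting the minimality of $\ell^*$) or directly exhibiting a $C_3\cup C_4$ in $G$. Recall from the remark that $H_3$ has the two pseudo-faces $\mathcal{F}_3=\{av_2bxa,\ av_1bya\}$. So I would first fix which pseudo-face $e$ is an interior edge of; by the symmetry of $H_3$ (swapping the two pseudo-faces) it suffices to treat, say, the face $av_1bya$, and then $z'\in\{a,v_1,b,y\}$.

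For each choice of $z'$ I would use Lemma \ref{base}: whenever $e$ is independent from $xy$ (resp.\ from $ab$) and $V(e)$ misses $V(\Theta_{xy})$ (resp.\ $V(\Theta_{ab})$), the two remaining vertices $c,c'$ of $\Theta_e$ must lie in $V(\Theta_{xy})$ (resp.\ $V(\Theta_{ab})$). Combined with the fact that the exterior vertices of the chosen pseudo-face cannot be $c$ or $c'$, this pins down $\{c,c'\}$ to very few possibilities. In most subcases the forced location of $c,c'$ produces either an $H_0,H_1$ or $H_2$ (contradicting minimality of $\ell^*$), or two edges $zc,zc'$ that close up into a triangle disjoint from a $4$-cycle, giving a $C_3\cup C_4$. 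The one surviving subcase should be the configuration that grows $H_3$ into $H_{3,1}$: here $z$ lands as an interior vertex of the pseudo-face and the resulting plane graph is exactly $H_{3,1}$ of Figure \ref{H31}, which is the only allowed nontrivial generating graph.

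Finally, for the stability half of the statement I would show that $H_{3,1}$ admits no further growth. Taking a hypothetical edge $f=vv'$ with $v'\in V(H_{3,1})$, $v\notin V(H_{3,1})$, I would note that $f$ must be an interior edge of the (unique remaining) pseudo-face of $H_{3,1}$, restrict $v'$ to the few boundary vertices of that face, and again apply Lemma \ref{base} against both $\Theta_{xy}$ and $\Theta_{ab}$ to derive a contradiction in every case; since $G$ by hypothesis contains no $H_0,H_1,H_2$ as plane subgraphs, any configuration yielding one of these is immediately excluded, and every other configuration yields a $C_3\cup C_4$. I expect the main obstacle to be bookkeeping: correctly reading off, for each placement of $z'$ (and later $v'$), which vertices are exterior to the relevant pseudo-face and which pairs $\{c,c'\}$ actually embed in the plane without forcing a forbidden $H_i$, since a wrong identification of the exterior vertices is exactly where such case analyses go wrong. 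Exploiting the full symmetry group of $H_3$ at the outset to cut the number of subcases should keep this manageable.
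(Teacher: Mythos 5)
Your proposal follows essentially the same route as the paper's proof: argue by contradiction via a growth edge $e=zz'$, do a case analysis on the position of $z'$ in a pseudo-face of $H_3$ (reduced by symmetry), use Lemma \ref{base} to pin down the remaining two vertices of $\Theta_e$, invoke the minimality of $\ell^*$ to exclude configurations forming $H_0$, $H_1$ or $H_2$, and observe that the single surviving case yields $H_{3,1}$, with the stability of $H_{3,1}$ handled identically. The only caveats are that your write-up defers the actual case-by-case verification to ``bookkeeping'' (which is where the substance of the paper's proof lies), and that $H_{3,1}$ in fact still has two pseudo-faces rather than a unique one (the paper again disposes of the second by symmetry), but the plan matches the paper's argument.
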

\begin{proof}
We first prove that each generating graph of $H_3$ is either itself or $H_{3,1}$.
Suppose the generating graph of $H_3$ is not itself. Then there is an edge $e=zz'$  of $E_I(G)$ (say $V(\Theta_e)=\{z,z',c,c'\}$) such that $z'\in V(H_3)$ and $z\notin V(H_3)$.
By symmetry, suppose $e$ is an interior edge of the face $aybv_1a$ of $H_3$ and $z'\in\{y,a,v_1\}$.

Suppose $z'=y$. Then $e=yz$.
Since $yz$ and $ab$ are independent edges of $E_I(G)$ and $y,z\notin V(\Theta_{ab})$, we have that $c,c'\in\{a,b,v_1\}$.
If $\{c,c'\}=\{a,b\}$, then $H_3\cup \Theta_e=H_{3,1}$.
If $\{c,c'\}=\{a,v_1\}$ (resp. $\{c,c'\}=\{b,v_1\}$), then $\Theta_{yz}\cup \Theta_{ab}$  is an $H_1$, a contradiction.

Suppose $z'=v_1$. Then $e=v_1z$. Since $zv_1$ and $xy$ are independent edges of $E_I(G)$ and $v_1,z\notin V(\Theta_{xy})$, we have that  $c,c'\in\{a,b,y\}$.
If $\{c,c'\}\in\{a,b\}$, then $\Theta_{yz}\cup \Theta_{xy}$  is an $H_2$, a contradiction.
If $\{c,c'\}=\{a,y\}$ (resp. $\{c,c'\}=\{b,y\}$), then $\Theta_{v_1z}\cup \Theta_{ab}$  is an $H_0$, a contradiction.

Suppose $z'=a$. Then $e=az$ and $b\notin\{c,c'\}$.
Since $az$ and $xy$ are independent edges of $E_I(G)$, we have that $y\in\{c,c'\}$ (say $y=c'$).
Therefore,  $\Theta_{az}\cup \Theta_{xy}$  is an $H_0$, a contradiction.

Now we prove that any generating graph of $H_{3,1}$ is itself.
Suppose to the contrary that there is an edge $f=pp'$  of $E_I(G)$ (say $V(\Theta_f)=\{p,p',q,q'\}$) such that $p'\in V(H_{3,1})$ and $p\notin V(H_{3,1})$.
By symmetry, suppose $f$ is an interior edge of the face $azbv_1a$ of $H_{3,1}$.
From above discussion, we have that $p'=z$.
Observe that $\{a,b\}=\{q,q'\}$ since $f=zp$ and $xy$ are independent edges of $E_I(G)$.
Therefore, $\Theta_f\cup \Theta_{xy}$  is an $H_2$, a contradiction.
Hence, any generating graph of $H_{3,1}$ is itself.
\end{proof}

\subsection{Counting $|E_I(G)|$}

By Lemmas \ref{lem-base-1}, \ref{lem-base-2} and \ref{lem-H3}, and Observations \ref{obser-H0} and \ref{obser-H1}, we have that $$H_{max}\in\{H_i:i\in[0,5]\}\cup\{H_{0,i}:i\in[5]\}\cup \{H_{1,3},H_{3,1}\}.$$

\begin{lemma}\label{lem-matching}
If $H_{max}\in\{H_i:i\in[0,4]\}\cup\{H_{1,3}\}$, then $E_{out}$ is a matching of $G$;
if $H_{max}\in\{H_{0,i}:i\in[5]\}\cup\{H_5,H_{3,1}\}$, then $E_{out}=\emptyset$.
\end{lemma}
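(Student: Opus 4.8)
The whole argument turns on one reduction, so I would establish it first. Set $S:=V(\Theta_{e^*})\cap V(\Theta_{f^*})$; by Lemma \ref{lem-base-1} we have $|S|\in\{2,3\}$. I claim that every $e=uv\in E_{out}$, writing $V(\Theta_e)=\{u,v,c,c'\}$ with apex vertices $c,c'$, satisfies $\{c,c'\}\subseteq S$. Since $e\in E_{out}$, the endpoints $u,v$ lie outside $V(\widehat H_{max})$, whereas $V(\Theta_{e^*})\cup V(\Theta_{f^*})\subseteq V(H^*)\subseteq V(\widehat H_{max})$; hence $e$ is independent from both $e^*$ and $f^*$, and neither $u$ nor $v$ lies in $V(\Theta_{e^*})\cup V(\Theta_{f^*})$. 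Applying Lemma \ref{base} to the independent pairs $(e,e^*)$ and $(e,f^*)$ then forces $c,c'\in V(\Theta_{e^*})$ and $c,c'\in V(\Theta_{f^*})$, i.e. $\{c,c'\}\subseteq S$. This single fact drives both halves of the lemma.

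For the matching claim I would argue by contradiction, noting that the argument works for every $H_{max}$. Suppose $e_1=uw_1$ and $e_2=uw_2$ are distinct edges of $E_{out}$ sharing the vertex $u$. Their apex pairs both lie in $S$ and each has size $2$; since $|S|\le 3$, two such pairs meet, say in $s$. Then $uw_1s$ and $uw_2s$ are $3$-faces of $G$ on opposite sides of $us$, so $us\in E_I(G)$ with $V(\Theta_{us})=\{u,s,w_1,w_2\}$. As $u,w_1,w_2\notin V(\widehat H_{max})$, we get $V(\Theta_{us})\cap V(\Theta_{e^*})\subseteq\{s\}$ and likewise for $f^*$; and because $e^*,f^*$ are independent, $\{x,y\}\cap\{a,b\}=\emptyset$, so $s$ misses at least one of $V(e^*),V(f^*)$ and $us$ is independent from the corresponding $g\in\{e^*,f^*\}$. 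This gives $|V(\Theta_{us})\cap V(\Theta_g)|\le 1$, contradicting Lemma \ref{base}. Hence no two edges of $E_{out}$ share a vertex and $E_{out}$ is a matching.

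It then remains to upgrade ``matching'' to ``empty'' when $H_{max}\in\{H_{0,i}:i\in[5]\}\cup\{H_5,H_{3,1}\}$, for which I would rule out even a single $e=uv\in E_{out}$. When $\ell^*\in\{0,3\}$ (the families $H_{0,i}$ and $H_{3,1}$) one has $|S|=2$, so the apex pair is forced to equal $S=\{c,c'\}$, and $\Theta_e$ contributes the $3$-face $uvc$ whose only vertex in $V(\widehat H_{max})$ is $c$. The plan is to exhibit inside $H_{max}$ a $4$-cycle avoiding $c$ (equivalently, a triangle avoiding $\{c,c'\}$ to pair with the outer $4$-cycle $cuc'v$): together with $uvc$ this yields a $C_3\cup C_4$, a contradiction. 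The vertices created in passing from $H_0$ to $H_{0,i}$ and from $H_3$ to $H_{3,1}$ provide exactly such a cycle; for instance $H_{0,2}$ contains the $4$-cycle through $y,z,v_1,b$ avoiding $a$, and $H_{3,1}$ contains a $4$-cycle avoiding $a$ through its added vertex, whereas $H_0$ and $H_3$ possess no $4$-cycle avoiding either vertex of $S$ and no triangle avoiding both, which is precisely why those bases give only a matching. For $H_5$ ($\ell^*=5$, $S=\{a,b,x\}$) I would instead use minimality: each candidate apex pair $\{c,c'\}\subseteq\{a,b,x\}$ spans an edge $cc'\in E_I(G)$ (its two incident triangles are $3$-faces of $G$, each belonging to $\Theta_{e^*}$ or $\Theta_{f^*}$), whence $\Theta_e\cup\Theta_{cc'}$ is an $H_1$ or an $H_3$, contradicting $\ell^*=5$. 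In every case $E_{out}=\emptyset$.

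The reduction and the matching argument are routine once the key fact is in place; the real work is the empty case. The main obstacle is its case-dependence: for each of $H_{0,1},\dots,H_{0,5}$ and $H_{3,1}$ I must read off from the figures a concrete $4$-cycle avoiding the prescribed apex and check disjointness from the relevant triangle, while also verifying that the matching graphs ($H_0,H_3$, and the $|S|=3$ graphs $H_2,H_4,H_{1,3}$) genuinely contain no such cycle. A secondary point requiring care is the $|S|=3$ situation, where one must confirm that two size-$2$ apex pairs always share a vertex and that $H_5$ is handled by minimality rather than by a direct $C_3\cup C_4$.
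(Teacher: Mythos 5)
Your reduction---every $e=uv\in E_{out}$ has both apexes in $S:=V(\Theta_{e^*})\cap V(\Theta_{f^*})$---is correct, and your matching argument is sound and genuinely different from the paper's: the paper pins down a common apex pair $\{r,r'\}$ for all of $E_{out}$ (equation (\ref{eq-app-1})) and, given adjacent $s_1s_2,s_2s_3\in E_{out}$, pairs the $4$-cycle $rs_1s_2s_3r$ with a $3$-face of $H_{max}$ containing $r'$ but not $r$; you instead extract from the shared apex $s$ a new interior edge $us$ with $V(\Theta_{us})=\{u,s,w_1,w_2\}$ and contradict Lemma~\ref{base} directly, which is uniform over all $H_{max}$ and avoids choosing the face $F$. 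Your treatment of $H_{0,i}$ and $H_{3,1}$ in the second claim (apex pair forced to equal the two-element $S$, then a $4$-cycle of $H_{max}$ avoiding one apex paired with the $3$-face on the other apex) is also correct; it coincides with the paper's argument for $H_{3,1}$, while for $H_{0,i}$ the paper instead observes that the apexes would have to lie in $V(\Theta_{xy})\cap V(\Theta_{ab})\cap V(\Theta_{yz})$, a single vertex.

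The genuine gap is the $H_5$ case. You assert that every candidate apex pair $\{c,c'\}\subseteq S$ spans an edge $cc'\in E_I(G)$ because ``its two incident triangles are $3$-faces of $G$ belonging to $\Theta_{e^*}$ or $\Theta_{f^*}$.'' In $H_5$ as the paper actually uses it (pseudo-faces $axv_1a$ and $ybv_1b$, hence $S=\{x,v_1,b\}$, and by equation (\ref{eq-app-2}) the pairs that occur are $\{x,v_1\}$ and $\{b,v_1\}$), the edges $xv_1$ and $bv_1$ each lie on only \emph{one} $3$-face of $\Theta_{e^*}\cup\Theta_{f^*}$; the other face incident to such an edge is a pseudo-face which, precisely because it contains $uv$ in its interior, is not a face of $G$, and nothing forces the face of $G$ on that side of $cc'$ to be a triangle. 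So $cc'\in E_I(G)$ is unjustified and the appeal to the minimality of $\ell^*$ does not go through. (Your $S=\{a,b,x\}$ follows the wording of the proof of Lemma~\ref{lem-base-1}, which is inconsistent with Figure~\ref{Base=graph} and with equation (\ref{eq-app-2}); the later sections follow the figure, so this slip is partly the paper's.) The repair is exactly your own device from the other cases, and is what the paper does: for apex pair $\{x,v_1\}$ the $3$-face $uvx$ together with the $4$-cycle $yv_1aby$ of $H_5$ gives a $C_3\cup C_4$, and for $\{b,v_1\}$ use $uvb$ with the $4$-cycle $av_1yxa$.
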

\begin{proof}

Suppose $H_{max}\in(\bigcup_{i\in[0,4]}H_i)\cup\{H_{1,3}\}$.
If $ss'$ is an edge of  $E_{out}$ (say $V(\Theta_{ss'})=\{s,s',r,r'\}$), then by Lemma \ref{base}, we have
\begin{align}\label{eq-app-1}
\{r,r'\}=\left\{
\begin{array}{ll}
V(\Theta_{xy})\cap V(\Theta_{ab}), &  \mbox{ if }H_{max}\in(\bigcup_{i\in[0,3]}H_i)\cup\{H_{1,3}\},\\
\{a,b\}, & \mbox{ if }H_{max}=H_4.
\end{array}
\right.
\end{align}
Furthermore, there is a $3$-face $F$ of $H_{max}$ such that $r'\in V(F)$ and $r\notin V(F)$.
If there are to adjacent edges of $E_{out}$, say $s_1s_2$ and $s_2s_3$, then $F\cup rs_1s_2s_3r$ is a $C_3\cup C_4$, a contradiction.
Therefore, $E_{out}$ is a matching of $G$.

Suppose $H_{max}\in\{H_{0,i}:i\in[5]\}\cup\{H_5,H_{3,1}\}$ and $E_{out}\neq \emptyset$ (say $ss'\in E_{out}$ and $V(\Theta_{ss'})=\{s,s',r,r'\}$).
If  $H_{max}\in\{H_{0,i}:i\in[5]\}$, then by Lemma \ref{base}, we have
$\{r,r'\}\subseteq V(\Theta_{ab})\cap V(\Theta_{xy})\cap V(\Theta_{yz})$.
However, $|V(\Theta_{ab})\cap V(\Theta_{xy})\cap V(\Theta_{yz})|=1$, a contradiction.
For $H_{max}\in\{H_5,H_{3,1}\}$,
\begin{align}\label{eq-app-2}
\{r,r'\}=\left\{
\begin{array}{ll}
\{x,v_1\}, & \mbox{ if }H_{max}=H_5\mbox{ and }ss'\mbox{ is an interior edge of the pseudo-face }axv_1a;\\
\{b,v_1\}, & \mbox{ if }H_{max}=H_5\mbox{ and }ss'\mbox{ is an interior edge of the pseudo-face }byv_1b;\\
\{a,b\}, & \mbox{ if }H_{max}=H_{3,1}.
\end{array}
\right.
\end{align}
Note that there is a $4$-cycle $C$ of $H_{max}$ containing exact one of $r,r'$ (say $r'$).
Thus, $rss'r\cup C$ is a $C_3\cup C_4$ of $G$, a contradiction.
Therefore, $E_{out}=\emptyset$.
\end{proof}

The following conclusion is necessary for the proof of Lemma \ref{key-lem-2}.
For easy of reading, we prove it in the next section.
\begin{lemma}\label{app}
The following statements hold.
\begin{enumerate}
  \item If $H_{max}\in\{H_i:i\in[0,5]\}$, then either $|E_I(G)|< \left\lfloor\frac{n}{2}\right\rfloor+4$ or
      \begin{enumerate}
        \item [1.1.] if $H_{max}\in\{H_0,H_1,H_2\}$, then $|E(\widehat{H}_{\max})\cap E_I(G)|\leq 7$, and the equality holds only if
\begin{align*}
\widehat{H}_{max}=\left\{
\begin{array}{ll}
ay\vee \{xu_1,v_1b\}, & \mbox{ if }H_{max}=H_0;\\
v_1b\vee \{av_2,xy\}, & \mbox{ if }H_{max}=H_1;\\
v_1v_2\vee\{ab,xy\}, & \mbox{ if }H_{max}=H_2.
\end{array}
\right.
\end{align*}
        \item [1.2.] if $H_{max}\in\{H_3,H_4\}$, then $|E_I(G)\cap E(\widehat{H}_{\max})|\leq \left\lfloor\frac{|H_{max}|}{2}\right\rfloor+3$;
        \item [1.3.] if $H_{max}=H_5$, then $|E_I(G)\cap E(\widehat{H}_{\max})|\leq 9$.
      \end{enumerate}
  \item If $H_{max}=H_{1,3}$, then $|E_I(G)\cap E(\widehat{H}_{\max})|\leq \left\lfloor\frac{|H_{max}|}{2}\right\rfloor+3$.
  \item If $H_{max}\in\{H_{0,i}:i\in[5]\}\cup\{H_{3,1}\}$, then $E(\widehat{H}_{\max})\cap E_I(G)|\leq 12$.
\end{enumerate}
\end{lemma}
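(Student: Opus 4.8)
The plan is to combine the identity $|E_I(G)|=|E_I(G)\cap E(\widehat{H}_{max})|+|E_{out}|$ (recorded just after Definition~\ref{def-1}) with Lemma~\ref{lem-matching}, and to reduce the whole statement to a finite case analysis over the list $H_{max}\in\{H_i:i\in[0,5]\}\cup\{H_{0,i}:i\in[5]\}\cup\{H_{1,3},H_{3,1}\}$ that was established immediately before the statement. The structural observation driving the argument is that, because $H_{max}$ is a \emph{maximal} generating graph and $\widehat{H}_{max}$ is induced on the fixed, bounded vertex set $V(H_{max})$, no interior edge of $G$ crosses out of $V(\widehat{H}_{max})$ (this is exactly $\partial(\widehat{H}_{max})\cap E_I(G)=\emptyset$). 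Hence every edge counted by $E_I(G)\cap E(\widehat{H}_{max})$ is either an edge already present in $H_{max}$ or a chord lying inside one of the few explicitly listed pseudo-faces of $H_{max}$ and joining two of its boundary vertices. Thus the quantity to be bounded is a count of admissible chords inside a constant number of cycles.

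First I would, for each $H_{max}$, read off its pseudo-faces from the Remark following Lemma~\ref{lem-base-1} (and from the face lists $F_A^i,F_B^i$ in the subsections), and enumerate the candidate interior edges: the interior edges of $H_{max}$ together with every chord of every pseudo-face. Then I would test each candidate chord $z_1z_2$ against three constraints. Lemma~\ref{base} forces $\Theta_{z_1z_2}$ to meet both $\Theta_{xy}$ and $\Theta_{ab}$ in at least two vertices; the minimality of $\ell^*$ forbids $\Theta_{z_1z_2}$ from forming an $H_i$ with $i<\ell^*$ together with any already present interior edge; and $G$ being $C_3\cup C_4$-free eliminates any chord that completes a triangle disjoint from a $4$-cycle of $\widehat{H}_{max}$ (or a $4$-cycle disjoint from a triangle). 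In each case these three tests kill all but a bounded set of chords, and summing the survivors gives the stated numerical bound.

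The bounds are calibrated to feed directly into Lemma~\ref{key-lem-2}. When $E_{out}$ is a matching (the cases $H_{max}\in\{H_i:i\in[0,4]\}\cup\{H_{1,3}\}$) I bound $|E_{out}|\le\lfloor(n-|H_{max}|)/2\rfloor$, so a local bound of the shape $\lfloor|H_{max}|/2\rfloor+c$ on $|E_I(G)\cap E(\widehat{H}_{max})|$ produces $|E_I(G)|\le\lfloor n/2\rfloor+c$; the value $c=4$ for $H_0,H_1,H_2$ permits equality, while $c=3$ for $H_3,H_4,H_{1,3}$ forces a strict inequality and so rules these out as extremal. When $E_{out}=\emptyset$ (the cases $H_{max}\in\{H_{0,i}:i\in[5]\}\cup\{H_5,H_{3,1}\}$) we have $|E_I(G)|=|E_I(G)\cap E(\widehat{H}_{max})|$, a constant, so a crude bound ($\le 12$ for the $H_{0,i}$ and $H_{3,1}$, $\le 9$ for $H_5$) already beats $\lfloor n/2\rfloor+4$ for $n\ge 20$ and no sharp count is needed. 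The escape clause ``either $|E_I(G)|<\lfloor n/2\rfloor+4$'' is meant to absorb those sub-configurations of a large pseudo-face in which the surviving chords fall short of the stated local bound but in which the global structure already drives $|E_I(G)|$ below the target.

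The main obstacle I anticipate is the large $5$-cycle pseudo-face ($A=ybv_2av_1y$ in $H_1$, and its analogues in $H_0$ and $H_2$), where several chords can coexist. There one must simultaneously avoid creating a $C_3\cup C_4$ with a triangle already present in $H_{max}$, respect the minimality of $\ell^*$, and---most delicately---pin down exactly which chord configuration realizes equality $|E_I(G)\cap E(\widehat{H}_{max})|=7$. Verifying that equality forces $\widehat{H}_{max}$ to be precisely $ay\vee\{xu_1,v_1b\}$ (and its counterparts for $H_1,H_2$), which is the local appearance of the extremal graph $(\frac{n-2}{2}K_2)\vee K_2$, is the step demanding the most careful bookkeeping and is where the structural characterization in Theorem~\ref{main-1} is actually earned.
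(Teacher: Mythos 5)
Your overall architecture matches the paper's: the same identity $|E_I(G)|=|E_I(G)\cap E(\widehat{H}_{max})|+|E_{out}|$, the same finite list of candidates for $H_{max}$, and the same calibration of the local bounds against $\lfloor n/2\rfloor+4$. But there is a genuine gap in the mechanism you propose for eliminating chords, and it traces back to a misreading of the disjunction ``either $|E_I(G)|<\lfloor n/2\rfloor+4$ or \dots''. You treat that clause as a passive escape hatch that ``absorbs'' awkward sub-configurations. In the paper it is the engine of the whole case analysis: one assumes $|E_I(G)|\geq\lfloor n/2\rfloor+4$, deduces $|E_{out}|\geq 2$ (since $e(\widehat{H}_{max})$ is bounded by a constant and $n\geq 20$), and thereby obtains two \emph{independent} edges $ss',pp'\in E_{out}$ whose $\Theta$-graphs are anchored at the pair $\{r,r'\}$ forced by Lemma~\ref{base} (e.g.\ $\{a,y\}$ for $H_0$, $\{v_1,b\}$ for $H_1$, $\{a,b\}$ for $H_3,H_4$). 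The triangles $rss'r$ supply the disjoint $C_3$ in essentially every forbidden-configuration argument, and the path $r$--$s$--$r'$ drawn inside the big pseudo-face gives a planarity separation (Claim~\ref{app-clm-1}) that kills chords outright. Having two such edges, not just one, also matters: in the $H_4$ case one must pick the external triangle avoiding the fourth vertex of $\Theta_{ya}$.

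Without this ingredient your three tests do not close the argument. For $H_0,H_1,H_2$ the graph $\widehat{H}_{max}$ has only six vertices, so no $C_3\cup C_4$ can ever be found inside it; a chord such as $xv_1$ in the pseudo-face of $H_0$ creates the $4$-cycle $xv_1bax$, but the disjoint triangle needed to reach a contradiction must come from outside $V(\widehat{H}_{max})$, i.e.\ from $E_{out}$. Moreover, a chord of a pseudo-face need not lie in $E_I(G)$ at all, so neither Lemma~\ref{base} nor the minimality of $\ell^*$ applies to it directly; yet such chords must still be excluded, because they change $\widehat{H}_{max}$ and would wreck the equality characterization $\widehat{H}_{max}=ay\vee\{xu_1,v_1b\}$ (and its analogues) that you correctly identify as the delicate step. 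To repair the proposal, make the contrapositive assumption explicit at the outset of part 1, extract the two independent $E_{out}$-edges, pin down $\{r,r'\}$ via Lemma~\ref{base}, and use the resulting external triangles and separating paths as the primary tool in the chord elimination; the paper additionally splits $E_I(G)\cap E(\widehat{H}_{max})$ into edges whose $\Theta$-graph leaves $V(\widehat{H}_{max})$ and those whose $\Theta$-graph stays inside, which is what lets it decide, edge by edge, which of the two elimination mechanisms applies.
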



\begin{figure}[ht]
    \centering
    \includegraphics[width=300pt]{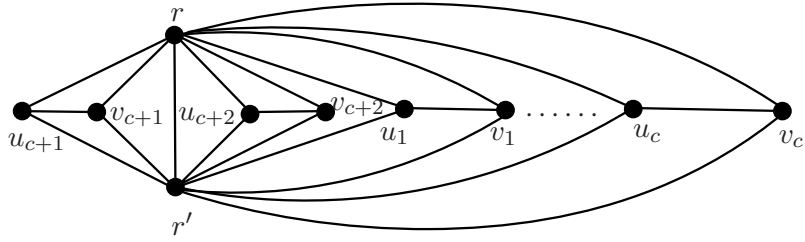}\\
    \caption{A planar embedding of $G^*$.} \label{H-G}
\end{figure}

{\bf Proof of Lemma \ref{key-lem-2}:}
Suppose $H_{max}\in\{H_{0,i}:i\in[5]\}\cup\{H_5,H_{3,1}\}$. By Lemmas \ref{lem-matching} and \ref{app}, we have that $|E_I(G)|=|E_I(G)\cap E(\widehat{H}_{max})|\leq 12< \left\lfloor\frac{n}{2}\right\rfloor+4$ since $n\geq 20$, the result follows.

Suppose $H_{max}=\{H_3,H_4,H_{1,3}\}$.
By Lemmas \ref{lem-matching} and \ref{app}, we have that
$$|E_I(G)|=|E_{out}| +|E_I(G)\cap E(\widehat{H}_{max})|\leq \left\lfloor\frac{n-|H_{max}|}{2}\right\rfloor
+\left\lfloor\frac{|H_{max}|}{2}\right\rfloor+3
<\left\lfloor\frac{n}{2}\right\rfloor+4.$$
The result follows.

Suppose $H_{max}\in\{H_i:i\in[0,2]\}$.
By Lemmas \ref{lem-matching} and \ref{app}, we have that
$$|E_I(G)|=|E_{out}| +|E_I(G)\cap E(\widehat{H}_{max})|\leq \left\lfloor\frac{n-|H_{max}|}{2}\right\rfloor+7
=\left\lfloor\frac{n}{2}\right\rfloor+4.$$
Moreover, if the equality holds, then  $|E_{out}|=\left\lfloor\frac{n-|H_{max}|}{2}\right\rfloor
=\left\lfloor\frac{n-6}{2}\right\rfloor$ and
\begin{align*}
\widehat{H}_{max}=\left\{
\begin{array}{ll}
ay\vee \{xu_1,v_1b\}, & \mbox{ if }H_{max}=H_0;\\
v_1b\vee \{av_2,xy\}, & \mbox{ if }H_{max}=H_1;\\
v_1v_2\vee\{ab,xy\}, & \mbox{ if }H_{max}=H_2.
\end{array}
\right.
\end{align*}
Let $c=\lfloor(n-6)/2\rfloor$.
Since $n\geq 20$, it follows that $c\geq 7$.
Relabel each vertex of $G$ such that
\begin{align*}
\{r,r'\}=\left\{
\begin{array}{ll}
\{a,y\}, & \mbox{ if }H_{max}=H_0;\\
\{v_1,b\}, & \mbox{ if }H_{max}=H_1;\\
\{v_1,v_2\}, & \mbox{ if }H_{max}=H_2,
\end{array}
\right.
\end{align*}
$E_{out}=\{u_1v_1,u_2v_2,\ldots, u_cv_c\}$ and
$\widehat{H}_{max}=rr'\vee\{u_{c+1}v_{c+1},u_{c+2}v_{c+2}\}$.
Let $V(\Theta_{u_iv_i})=\{u_i,v_i,r_i,r'_i\}$ for each $e_i$.
For each $i\in[c]$, since $u_iv_i$ and $u_{c+1}v_{c+1}$, as well as $u_iv_i$ and $u_{c+2}v_{c+2}$ are independent edges of $E_I(G)$, it follows from Lemma \ref{base} that $\{r,r'\}=\{r'_i,r'_i\}$.
Therefore, $G^*=H_{max}\cup (\bigcup_{i\in[c]}\Theta_{u_iv_i})$ is a
$(\left\lfloor\frac{n-2}{2}\right\rfloor K_2)\vee K_2$.
Suppose $U=V(H_{max})\cup(\bigcup_{i\in[c]}V(\Theta_{u_iv_i}))$.
Then $G^*$ is a spanning subgraph of $G[U]$.
If $G^*$ is a proper subgraph of $G[U]$, then $G[U]$ is a graph obtained from $G^*$ by adding some edges $u_iv_j$, where $i\neq j$.
By symmetry, suppose $v_1u_2\in E(G[U])-E(G^*)$.
Then $r'u_3v_3r'\cup ru_1v_1u_2r$ is a $C_3\cup C_4$ of $G[U]$, a contradiction. Therefore, $G[U]=G^*$ is a $(\left\lfloor\frac{n-2}{2}\right\rfloor K_2)\vee K_2$.
If $n$ is even, then $G=G^*=(\frac{n-2}{2}K_2)\vee K_2$.
If $n$ is odd, then there is a vertex $u^*$ of $G$ such that $\{u^*\}=V(G)-V(G^*)$.
Since $e(G)\geq \left\lfloor\frac{n}{2}\right\rfloor-4$, it follows that $d_G(u^*)\geq 2$.
Without loss of generality, suppose $w_1,w_2\in N_G(u^*)$.
By symmetry, we can assume that
$$\{w_1,w_2\}\in \{ \{r,r'\},\{r,u_1\},\{u_1,v_2\},\{u_1,v_1\}\}.$$
If $\{w_1,w_2\}\in \{ \{r,u_1\},\{u_1,v_2\},\{u_1,v_1\}\}$, then it is easy to verify that there is a $C_3\cup C_4$ in $G$, a contradiction.
Therefore, $N_G(u^*)=\{r,r'\}$ and $G$ is a $(\frac{n-2}{2}K_2)\vee K_2$.

\section{Proof of Lemma \ref{app}}\label{sec-4}

We partition $E(\widehat{H}_{max})$ into three parts $E_{diff}$, $Int(H_{max})$ and $Ext(H_{max})$, where $E_{diff}=E(\widehat{H}_{max})-E(H_{max})$,
$Int(H_{max})=E(H_{max})-Ext(H_{max})$ and
$$Ext(H_{max})=\{e:e\mbox{ is an edge of a pseudo-face }F\mbox{ of }H_{max}\mbox{ and }|F|\geq 4\}.$$
It is obvious that each edge of $Int(H_{max})$ is either an interior edge of $H_{max}$ or an edge of a pseudo $3$-face of $H_{max}$.

We divide $E(\widehat{H}_{max})\cap E_I(G)$ into two parts
$$E_A=\{e\in E(\widehat{H}_{max})\cap E_I(G):V(\Theta_e)-V(\widehat{H}_{max})\}\neq\emptyset$$
and
$$E_B=\{e\in E(\widehat{H}_{max})\cap E_I(G):V(\Theta_e)-V(\widehat{H}_{max})\}=\emptyset.$$
For an edge $e\in E_I(G)\cap E(H_{max})$, if $e$ is an interior edge of $H_{max}$, then $e\in Int(H_{max})$; otherwise, exact one $3$-face of $\Theta_e$ is not a $3$-face of $H_{max}$, and we denote it by $F_e$.

\begin{lemma}
Suppose $H_{max}\in\{H_i:i\in[0,5]\}$. Then either $|E_I(G)|<  \left\lfloor\frac{n}{2}\right\rfloor+4$, or the following statements hold.
\begin{enumerate}
  \item If $H_{max}=H_i$, $0\leq i\leq 2$, then $|E_I(G)\cap E(\widehat{H}_{max})|\leq 7$ and the equality holds only if
\begin{align*}
\widehat{H}_{max}=\left\{
\begin{array}{ll}
ay\vee \{xu_1,v_1b\}, & \mbox{ if }H_{max}=H_0;\\
v_1b\vee \{av_2,xy\}, & \mbox{ if }H_{max}=H_1;\\
v_1v_2\vee\{ab,xy\}, & \mbox{ if }H_{max}=H_2.
\end{array}
\right.
\end{align*}
  \item If $H_{max}=H_3$, then $|E_I(G)\cap E(\widehat{H}_{max})|\leq 6$.
  \item If $H_{max}=H_4$, then $|E_I(G)\cap E(\widehat{H}_{max})|=4$.
  \item If $H_{max}=H_5$, then $|E_I(G)\cap E(\widehat{H}_{max})|\leq 9$.
\end{enumerate}
\end{lemma}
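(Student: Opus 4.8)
The plan is to bound $|E_I(G)\cap E(\widehat H_{max})|$ by analyzing, for each small base graph $H_{max}=H_i$ with $i\in[0,5]$, exactly which edges of $\widehat H_{max}$ can belong to $E_I(G)$. The key structural input is the partition $E(\widehat H_{max})=E_{diff}\cup Int(H_{max})\cup Ext(H_{max})$ introduced just above, together with the further split $E(\widehat H_{max})\cap E_I(G)=E_A\cup E_B$. Because each $H_i$ has a bounded number of vertices and a completely explicit face structure (recorded in the Remark listing all pseudo-faces $\mathcal F_i$), the whole argument is a finite case check: for each candidate edge $e$ of $\widehat H_{max}$ one asks whether $e$ can lie on the boundary of two $3$-faces of $G$ without creating either a forbidden $C_3\cup C_4$ or a plane subgraph $H_j$ with $j<\ell^*$, which would violate the minimality of $\ell^*$.

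First I would handle the edges in $E_A$, i.e.\ those interior edges $e$ whose $\Theta_e$ reaches a vertex outside $\widehat H_{max}$. Since $H_{max}$ is a \emph{maximal} generating graph, any such $e$ forces its two neighbours in $V(\Theta_e)$ to lie in $V(\widehat H_{max})$ (by Lemma~\ref{base}), and the generating-graph lemmas (\ref{lem-base-2}, \ref{lem-H4-1}--\ref{lem-H4-3}, \ref{lem-H11-1}, \ref{lem-H3}) already pin down exactly where such an edge may attach; typically no such $e$ exists, or it would itself extend the generating graph, contradicting maximality. Then I would treat $E_B$, the interior edges whose $\Theta_e$ stays inside $\widehat H_{max}$: for each pseudo-face $F$ of $H_{max}$ I enumerate the possible chords/interior configurations and check, using Lemma~\ref{base} and the $C_3\cup C_4$-freeness, how many can coexist. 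The $\Theta$-graph of any such chord must meet every $\Theta_{e'}$ of the base edges in at least two vertices, which is extremely restrictive and collapses the count quickly.

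The cases split naturally by the size and shape of the pseudo-faces. For $H_0,H_1,H_2$ the base graph has six vertices and the counting yields at most $7$ interior edges, with equality forcing the rigid structure $rr'\vee\{\cdots\}$ displayed in the statement; the equality analysis is where I must be most careful, tracking precisely which edge is $r r'$ and verifying no extra edge can be added. For $H_3$ (six vertices, two triangular pseudo-faces) and $H_4$ (the single $4$-face $av_1bya$) the bounds $6$ and $4$ come from the same template but with fewer available chords. For $H_5$, whose two pseudo-faces $axv_1a$ and $byv_1b$ are both triangles sharing $v_1$, the bound $9$ reflects the larger interior room, and here I would invoke the alternative that $|E_I(G)|<\lfloor n/2\rfloor+4$ outright when the local count cannot be controlled.

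The main obstacle will be the equality characterisation in part~1: it is not enough to bound the count by $7$, one must show that attaining $7$ leaves \emph{no freedom} beyond the stated graph. This requires ruling out every alternative way of placing seven interior edges among six vertices consistent with planarity and $C_3\cup C_4$-freeness, and then matching the survivor against the explicit $\widehat H_{max}$ in each of the three cases. I expect most individual edge-exclusions to be short (exhibit a concrete $C_3\cup C_4$ or an $H_j$ with $j<\ell^*$), so the difficulty is organisational — keeping the finite case analysis exhaustive and consistent across $H_0,H_1,H_2$ — rather than conceptually deep.
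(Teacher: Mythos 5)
Your overall framework (the partition into $E_{diff}$, $Int(H_{max})$, $Ext(H_{max})$, the split into $E_A\cup E_B$, and a finite edge-by-edge exclusion) matches the paper's, but you miss the single idea that actually powers the case analysis. The paper does not treat the alternative $|E_I(G)|<\left\lfloor\frac{n}{2}\right\rfloor+4$ as a fallback to ``invoke when the local count cannot be controlled'' (and certainly not only for $H_5$, where the bound $9$ is simply $e(H_5)=9$). Rather, for $H_{max}\in\{H_0,\ldots,H_4\}$ it assumes $|E_I(G)|\geq\left\lfloor\frac{n}{2}\right\rfloor+4$ and deduces from $e(\widehat{H}_{max})\leq 12$ and $n\geq 20$ that $|E_{out}|\geq 2$. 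By Lemma \ref{lem-matching} these two outside interior edges $ss'$ and $pp'$ are independent, and by equality (\ref{eq-app-1}) each of their $\Theta$-graphs attaches to the \emph{fixed} pair $\{r,r'\}$ of $H_{max}$. The resulting triangle $rss'r$ (or $r'ss'r'$), which avoids most of $V(H_{max})$, is then paired with candidate $4$-cycles inside $\widehat{H}_{max}$ to manufacture a $C_3\cup C_4$; this is what forbids almost every chord in $E_{diff}$ (e.g.\ the claim that no vertex of $\{x,u_1\}$ is adjacent to a vertex of $\{v_1,b\}$ when $H_{max}=H_0$) and every problematic edge of $E_A$ (e.g.\ $ya\in E_I(G)$ for $H_{max}=H_4$ yields $bss'b\cup yxaw'y$). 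Without this device most of your individual exclusions have no engine, and the role of the disjunction in the statement is left unexplained.

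A second concrete gap: you dispose of $E_A$ by appealing to the maximality of the generating graph, claiming such an edge ``would itself extend the generating graph.'' It would not. A generating step adds $\Theta_{e_i}$ only for an interior edge $e_i$ with \emph{exactly one} endpoint in the current graph, whereas an edge of $E_A$ has both endpoints in $V(\widehat{H}_{max})$ and merely has the apex of one of its two $3$-faces outside; maximality says nothing about it. The paper eliminates these edges by exhibiting explicit $C_3\cup C_4$'s, again typically using the outside matching or Lemma \ref{base}. Finally, the equality characterisation in part~1 is not a search over all placements of seven interior edges: the exclusions shrink $E_I(G)\cap E(\widehat{H}_{max})$ to one explicit $7$-edge set, and attaining $7$ forces two specific edges into $E_B$, whose $3$-faces pin $\widehat{H}_{max}$ down to the stated join $rr'\vee\{\cdot\,,\cdot\}$.
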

\begin{proof}
If $H_{max}=H_5$, then $|E_I(G)\cap E(\widehat{H}_{max})|\leq e(H_5)=9$.
Thus, suppose $H_{max}\in\{H_i:0\leq i\leq 4\}$.
Assume that $|E_I(G)|\geq\left\lfloor\frac{n}{2}\right\rfloor+4$ below.
Since $|H_{max}|\leq6$ and $n\geq 20$, it follows that
$14\leq|E_I(G)|\leq  e(\widehat{H}_{max})+|E_{out}|\leq 12+|E_{out}|$.
Therefore, $|E_{out}|\geq 2$.
Without loss of generality, let $ss',pp'\in E_{out}$ and $V(\Theta_{ss'})=\{s,s',r,r'\}$.
By Lemma \ref{lem-matching}, $ss'$ and $pp'$ are independent edges of $E_I(G)$.

\setcounter{case}{0}
\begin{case}
$H_{max}=H_0$.
\end{case}

Recall equality (\ref{eq-app-1}), we have that $\{r,r'\}=\{a,y\}$.
Thus, $sa,sy\in E(G)$.
Moreover, $sa,sy$ are interior edges of the pseudo-face $byu_1xav_1b$ of $H_0$.
Therefore, the following conclusion is obviously.
\begin{claim}\label{app-clm-1}
Each vertex of $\{x,u_1\}$ and each vertex of $\{v_1,b\}$ are nonadjacent in $G$.
\end{claim}

By Claim \ref{app-clm-1}, $E_{diff}\subseteq \{au_1,bv_1\}$.
\begin{claim}\label{app-clm-2}
Each edge of $E_{diff}$ does not belong to $E_I(G)$.
\end{claim}
\begin{proof}
Suppose to the contrary that one of $au_1,bv_1$ belongs to $E_I(G)$.
By symmetry, suppose $au_1\in E_I(G)$.
Let $V(\Theta_{au_1})=\{a,u_1,d,d'\}$.

Suppose $d,d'\notin V(H_{max})$.
If  $|E_{out}|=2$, then
$$|E_I(G)|\leq e(\widehat{H}_{max})+2\leq e(H_{max})+4=13<\left\lfloor\frac{n}{2}\right\rfloor+4,$$
which contradicts the assumption $|E_I(G)|\geq \left\lfloor\frac{n}{2}\right\rfloor+4$.
If $|E_{out}|>2$, then since $E_{out}$ is a matching of $G$, there is an edge $g\in E_{out}$ such that $d,d'\notin V(e)$.
Note that $y\notin V(\Theta_{au_1})$.
Therefore, $yss'y\cup (\Theta_{au_1}-au_1)$ is a $C_3\cup C_4$ of $G$, a contradiction.

Suppose at most one of $\{d,d'\}$ does not belongs to $V(H_{max})$.
Since $ss',pp'$ are two independent edges of $E_{out}$, we can assume that $d,d'\notin \{s,s'\}$. Then $yss'y\cup (\Theta_{au_1}-au_1)$ is a $C_3\cup C_4$ of $G$, a contradiction.
\end{proof}

It is easy to verify that $xu_1,u_1y,av_1,v_1b\notin E_A$; otherwise there is a $C_3\cup C_4$ in $G$, a contradiction.
By Claim \ref{app-clm-1}, we have that $u_1y\notin E_B$ (resp. $v_1a\notin E_B$); otherwise $u_1v_1$ or $u_1b$ belongs to $E(G)$ (resp. $v_1x$ or $v_1u_1$ belongs to $E(G)$), a contradiction.
Therefore, $u_1y,v_1a\notin E_I(G)$.
Combining  with Claim \ref{app-clm-2},
$E_I(G)\cap E(\widehat{H}_{max})\subseteq E(H_0)-\{u_1y,v_1a\}$.
Furthermore, $E_I(G)\cap E(\widehat{H}_{max})=E(H_0)-\{u_1y,v_1a\}$ indicates that $xu_1,v_1b\in E_I(G)$.
Since $xu_1,v_1b\notin E_A$, it follows that $xu_1,v_1b\in E_B$.
By Claim  \ref{app-clm-1}, $xu_1ax$ and $yv_1by$ are $3$-faces of $G$, and hence $\widehat{H}_{max}=ay\vee \{xu_1,v_1b\}$.
Since $|E(H_0)-\{u_1y,v_1a\}|=7$, we have  that $|E_I(G)\cap E(\widehat{H}_{max})|\leq 7$ and the equality holds only if $\widehat{H}_{max}=ay\vee \{xu_1,v_1b\}$.

\begin{case}
$H_{max}=H_1$.
\end{case}

Note that $E_{diff}\subseteq\{v_1v_2,ay,v_2y\}$.
Recall equality (\ref{eq-app-1}), we have that $\{r,r'\}=\{v_1,b\}$.
If $ay\in E_{diff}$ (resp. $v_2y\in E_{diff}$), then $v_1ss'v_1\cup abxya$ (resp. $v_1ss'v_1\cup v_2bxyv_2$) is a $C_3\cup C_4$ of $G$, a contradiction.
Therefore, $ay,v_2y\notin E(G)$ and $E_{diff}\subseteq\{v_1v_2\}$.
Notice that if $bv_2\in E_B$ then $F_{bv_2}=byv_2b$, and if $v_1y\in E_B$ then $F_{v_1y}$ is either $v_1ayv_1$ or $v_1v_2yv_1$.
Since $ay,v_2y\notin E(G)$, it follows that $bv_2,v_1y\notin E_B$.
In addition,
it is easy to verify that $av_2,bv_2,v_1y\notin E_A$; otherwise there is a $C_3\cup C_4$, a contradiction.
Therefore, $bv_2,v_1y\notin E_I(G)$.

If $v_1v_2\in E(\widehat{H}_{max})\cap E_I(G)$, then $b,x,y\notin V(\Theta_{v_1v_2})$ and hence $bxyb\cup(\Theta_{v_1v_2}-v_1v_2)$ is a $C_3\cup C_4$ of $G$, a contradiction.
Therefore, $v_1v_2\notin E_I(G)$.
Recall that $bv_2,v_1y\notin E_I(G)$.
We have that $E(\widehat{H}_{max})\cap E_I(G)\subseteq E(H_1)-\{bv_2,yv_1\}$.
Moreover, $E(\widehat{H}_{max})\cap E_I(G)=E(H_1)-\{bv_1,yv_1\}$ implies that $av_2\in E_I(G)$.
Since $av_2\notin E_A$, it follows that $av_2\in E_B$ and hence $F_{av_2}=av_1v_2a$.
Therefore, $\widehat{H}_{max}=v_1b\vee \{av_2,xy\}$.
Since $|V(H_1)-\{bv_1,yv_1\}|=7$, we have that $|E(\widehat{H}_{max})\cap E_I(G)|\leq 7$ and the equality holds only if $\widehat{H}_{max}=v_1b\vee \{av_2,xy\}$.

\begin{case}
$H_{max}=H_2$.
\end{case}

Recall equality (\ref{eq-app-1}), we have that $\{r,r'\}=\{v_1,v_2\}$.
Note that $E_{diff}\subseteq\{xb,ya,v_1v_2\}$.
If $xb\in E(\widehat{H}_{max})$ (resp. $ya\in E(\widehat{H}_{max})$), then $ss'v_2s\cup abxv_1a$ (resp. $ss'v_2s\cup xyav_1x$) is a $C_3\cup C_4$, a contradiction.
Therefore, $E_{diff}\subseteq\{v_1v_2\}$.
It is easy to verify that each edge of $E(H_{max})-\{xy,ab\}$ does not belong to $E_A$, for otherwise there is a $C_3\cup C_4$, a contradiction.
By symmetry, we can assume that $v_1v_2$ is an interior edge of the face $av_1yv_2a$ of $H_2$ if $v_1v_2\in E(G)$.
Therefore, whenever $v_1v_2\in E(G)$ or not,  $xv_1,xv_2,bv_1,bv_2\notin E_I(G)$.
Hence
$E(\widehat{H}_{max})\cap E_I(G)\subseteq E(H_{max})\cup\{v_1v_2\}-\{xv_1,xv_2,bv_1,bv_2\}$.
Since $|V(H_{max})\cup\{v_1v_2\}-\{xv_1,xv_2,bv_1,bv_2\}|=7$, we have that $|E(\widehat{H}_{max})\cap E_I(G)|\leq 7$ and the equality holds only if $\widehat{H}_{max}=v_1v_2\vee\{ab,xy\}$.

\begin{case}
$H_{max}=H_3$.
\end{case}

Recall equality (\ref{eq-app-1}), we have that $\{r,r'\}=\{a,b\}$.
Obverse that $E_{diff}\subseteq\{v_2x,v_1y\}$.
If $v_2x\in E(\widehat{H}_{max})$ (resp. $v_1y\in E(\widehat{H}_{max})$),
then $ass'a\cup yxv_2by$ (resp. $ass'a\cup xyv_1bx$) is a $C_3\cup C_4$ of $G$, a contradiction.
Therefore, $E_{diff}=\emptyset$.
Let $D_1=\{av_1,av_2\}, D_2=\{bv_1,bv_2\},D_3=\{bx,by\}$ and $D_4=\{ax,ay\}$.
We prove that $|E(\widehat{H}_{max})\cap E_I(G)|\leq 6$ below.
Suppose to the contrary that $|E(\widehat{H}_{max})\cap E_I(G)|\geq 7$.
Then $|(\bigcup_{i\in[4]}D_i)\cap E_I(G)|\geq 5$.
Thus, either $|(D_1\cup D_3)\cap E_I(G)|\geq 3$ or $|(D_2\cup D_4)\cap E_I(G)|\geq 3$.
By symmetry, suppose $|(D_1\cup D_3)\cap E_I(G)|\geq 3$.
Then we can choose an edge $g_1\in D_1\cap E_I(G)$ and an edge $g_2\in D_3\cap E_I(G)$ such that $g_1,g_2$ are not in the same pseudo-face of $H_{max}$.
Without loss of generality, suppose $g_1=av_1$ and $g_2=bx$.
Since $E_{diff}=\emptyset$, it follows that $E_B=\emptyset$.
Then $av_1,bx\in E_A$ and hence we can find a $C_3\cup C_4$ in $G$, a contradiction.

\begin{case}
$H_{max}=H_4$.
\end{case}

Recall equality (\ref{eq-app-1}), we have that $\{r,r'\}=\{a,b\}$.
Note that $E_{diff}\subseteq\{yv_1\}$.
If $yv_1\in E(\widehat{H}_{max})$, then $ass'a\cup xyv_1bx$ is a $C_3\cup C_4$ of $G$, a contradiction.
Thus, $E_{diff}=\emptyset$ and $H_{max}=\widehat{H}_{max}=H_4$.
Therefore, for each $g\in\{ya,yb,v_1a,v_1b\}$, if $g\in E_I(G)$, then $g\in E_A$.
Next, we prove that for each $g\in \{ya,yb,v_1a,v_1b\}$, $g\notin E_I(G)$.
If $ya\in E_I(G)$, then there is a vertex $w'$ of $V(G)-V(H_{max})$ such that $V(\Theta_{ya})=\{y,a,x,w'\}$.
Since $ss'$ and $pp'$ are independent edges of $E_{out}$, we can assume that $w'\notin \{s,s'\}$.
Thus, $bss'b\cup yxaw'y$ is a $C_3\cup C_4$ of $G$, a contradiction.
So, $ya\notin E_I(G)$.
Similarly, $v_1b\notin E_I(G)$.
If $v_1a\in E_I(G)$ (resp. $v_1b\in E_I(G)$), then $\Theta_{av_1}\cup \Theta_{xb}$ (resp. $\Theta_{ax}\cup \Theta_{v_1b}$)  is  $H_0$.
Since $H_{max}=H_4$ implies $\ell=4$, by the maximality of $\ell$, we get a contradiction.
So, for each $g\in \{ya,yb,v_1a,v_1b\}$, $g\notin E_I(G)$.
Since $\widehat{H}_{max}=H_{max}$, it follows that $E(\widehat{H}_{max})\cap E_I(G)=\{xy,xa,xb,ab\}$ and hence $|E(\widehat{H}_{max})\cap E_I(G)|=4$.
\end{proof}

\begin{lemma}
If $H_{max}=H_{1,3}$, then $|E_I(G)\cap E(\widehat{H}_{max})|\leq7$.
\end{lemma}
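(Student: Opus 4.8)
My plan is to first read off the combinatorial structure of $H_{1,3}=H_1\cup\Theta_{bz}$, where $V(\Theta_{bz})=\{b,z,c,v_1\}$ with $3$-faces $bzc,bzv_1$, alongside $\Theta_{xy}$ (faces $xv_1y,xby$) and $\Theta_{ab}$ (faces $av_1b,av_2b$). Then $|V(H_{1,3})|=8$, $e(H_{1,3})=14$, and $H_{1,3}$ has the two pentagonal pseudo-faces $A=ybv_2av_1y$ and $Q=bxv_1zcb$, whose ten boundary edges are exactly $Ext(H_{1,3})$. I would check that $Int(H_{1,3})=\{xy,ab,bz,bv_1\}$ and that all four lie in $E_I(G)$ (the first three by construction, and $bv_1$ because it borders the two $3$-faces $av_1b$ and $bzv_1$). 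Since $\lfloor|H_{1,3}|/2\rfloor=4$, it then suffices to show that among the remaining edges of $\widehat H_{\max}$ at most three are interior; in fact I aim to prove that every interior edge of $\widehat H_{\max}$ other than these four equals $yb$ or $bx$, giving $|E_I(G)\cap E(\widehat H_{\max})|\le 6\le 7$.

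\textbf{The engine.} The key object is the single edge $bv_1$, which lies in $E(\Theta_{bz})$ (through $bzv_1$) and in $E(\Theta_{ab})$ (through $av_1b$). Consider an interior edge $e$ with $V(e)\subseteq V(H_{1,3})$ lying on the boundary of $A$ and not incident to $b$; then $e$ is independent from $bz$, and Lemma~\ref{base} demands that $V(\Theta_e)$ meet $\{b,z,c,v_1\}$ in at least two vertices, necessarily $b$ and $v_1$ since $z,c\notin V(A)$. If moreover $bv_1\in E(\Theta_e)$, then $\Theta_e$ and $\Theta_{bz}$ meet in exactly the edge $bv_1$, so $\Theta_e\cup\Theta_{bz}$ is an $H_0$, contradicting the minimality $\ell^*=1$. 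A short case analysis of the three $b$-free boundary edges $yv_1,v_1a,av_2$ of $A$ then rules each out: $yv_1$ cannot reach $b$ at all (its only candidate apex $b$ would force $bv_1$ to lie inside $A$), so it fails the demand of Lemma~\ref{base}; $v_1a$ carries the face $av_1b$, hence $bv_1\in E(\Theta_{v_1a})$ and we get an $H_0$; and $av_2$ can become interior only through the triangle $av_2v_1$, which simultaneously makes $v_1a$ interior with $bv_1\in E(\Theta_{v_1a})$, again an $H_0$. The mirror argument, with $\Theta_{ab}$ replacing $\Theta_{bz}$ and $Q$ replacing $A$, disposes of the $b$-free boundary edges $xv_1,v_1z,zc$ of $Q$.

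\textbf{The $b$-edges and conclusion.} It remains to treat the boundary edges through $b$, namely $bv_2,yb$ on $A$ and $bc,bx$ on $Q$. For $bv_2$ (resp.\ $bc$) I would apply Lemma~\ref{base} against $\Theta_{xy}=\{x,v_1,y,b\}$: the pair is independent, and the only way for $\Theta_{bv_2}$ (resp.\ $\Theta_{bc}$) to acquire a second vertex of $\{x,v_1,y,b\}$ is through the triangle $bv_2y$ (resp.\ $bcx$), which makes it share the edge $by$ (resp.\ $bx$) with $\Theta_{xy}$, producing an $H_0$. By contrast $yb$ and $bx$ each share a vertex with every one of $xy,ab,bz,bv_1$, so Lemma~\ref{base} never applies to them and they are not excluded; this is exactly why the count cannot be pushed below $6$. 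Collecting the surviving possibilities gives $E_I(G)\cap E(\widehat H_{\max})\subseteq\{xy,ab,bz,bv_1,yb,bx\}$, and the bound $\le 7$ follows.

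\textbf{Remarks and the hard part.} A pleasant feature of this route is that it is uniform: it never invokes $E_{out}$ or $C_3\cup C_4$-freeness beyond what is already packaged into Lemma~\ref{base} and the choice of $\ell^*$, so the case $E_{out}=\emptyset$ needs no separate treatment, and because Lemma~\ref{base} does not care whether the second apex of $\Theta_e$ is a vertex of $H_{1,3}$ or an interior vertex of a pseudo-face, the $E_A$-type edges are handled at the same time. The step I expect to be most delicate is precisely the bookkeeping that turns ``Lemma~\ref{base} forces the inner face on $e$ to have apex $v_1$'' into ``a neighbouring boundary edge becomes interior and drags the edge $bv_1$ (or $by$, $bx$) into some $\Theta$,'' i.e.\ confirming in each configuration that a genuine $H_0$ --- rather than an $H_1$ or $H_2$ --- is created; doing this reliably calls for a careful reading of the planar faces incident to $b$ and $v_1$ in $H_{1,3}$.
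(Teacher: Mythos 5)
Your analysis of the fourteen edges of $H_{1,3}$ itself is correct, and the part that goes beyond the paper is sound: playing each boundary edge of the pseudo-faces $A=ybv_2av_1y$ and $Q=bxv_1zcb$ off against whichever of $\Theta_{bz},\Theta_{ab},\Theta_{xy}$ would then share the single edge $bv_1$ (or $by$, $bx$) with it does produce an $H_0$, contradicting $\ell^*=1$. In particular you exclude $av_1$ and $zv_1$ from $E_I(G)$ outright, whereas the paper keeps both as candidates and only kills the size-$8$ configuration at the very last step; restricted to $E(H_{1,3})$ your bound of $6$ is a genuine (if minor) strengthening of the paper's argument.

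There is, however, a real gap: the lemma bounds $|E_I(G)\cap E(\widehat{H}_{\max})|$, and $\widehat{H}_{\max}$ is the subgraph of $G$ \emph{induced} by $V(H_{1,3})$, so $E(\widehat{H}_{\max})$ may contain chords of the two pentagons that are not edges of $H_{1,3}$ at all --- up to planarity these are $v_2y$, $ay$, $v_1v_2$ inside $A$ and $xz$, $xc$, $v_1c$ inside $Q$. Your enumeration covers only $Int(H_{1,3})\cup Ext(H_{1,3})=E(H_{1,3})$, so the asserted containment $E_I(G)\cap E(\widehat H_{\max})\subseteq\{xy,ab,bz,bv_1,yb,bx\}$ does not follow from what you prove, and several such chords could a priori push the count past $7$. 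The paper spends roughly half of its proof on exactly this set ($E_{diff}$ in its notation): it first shows $v_2y,ay,xc,xz\notin E(G)$ because each closes a $C_4$ disjoint from one of the triangles $bzc$ or $abv_2$ (for instance $v_2y$ yields $yv_2av_1y\cup bzcb$), and then shows $v_1v_2,v_1c\notin E_I(G)$ via Lemma \ref{base} against $\Theta_{xy}$ and $\Theta_{ab}$. These exclusions are of the same flavour as the arguments you already use, so the gap is fillable, but as written your proof is incomplete.
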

\begin{proof}
Let $B_1=\{av_2,bv_2,v_1x,v_1y,cz,cb\}$.
It is easy to verify that each edge of $B_1$ does not belong to $E_A$; otherwise there is a $C_3\cup C_4$, a contradiction.
Note that $E_{diff}\subseteq \{v_1v_2,v_1c,v_2y,ay,xc,xz\}$.
If one  of  $v_2y,ay,xc,xz$ belongs to $E_{diff}$, then there is obviously a $C_3\cup C_4$ in $G$, a contradiction.
Therefore, $v_2y,ay,xc,xz\notin E(G)$ and $E_{diff}\subseteq \{v_1v_2,v_1c\}$.
Furthermore, each edge of $B_2=\{bv_2,bc,v_1x,v_1y\}$ does not belong to $E_B$.
Since $B_2\subseteq B_1$, we have that each edge of $B_2$ is not in $E_I(G)$.

If $av_1v_2a$ (resp. $czv_1c$) is a $3$-face of $G$, then $\Theta_{av_1}\cup \Theta_{bz}$ (resp. $\Theta_{zv_1}\cup \Theta_{ab}$)  is an $H_0$, a contradiction.
Therefore,  we have that $av_2,cz\notin E_B$.
Since $av_2,cz\in B_1$, it follows that $av_2,cz\notin E_I(G)$.
Note that $B_2\notin E_I(G)$ and $B_1=B_2\cup \{av_2,cz\}$.
Then each edge of $B_1$ is not in $E_I(G)$.

We now prove that $v_1v_2,cz_1\notin E_I(G)$.
By symmetry, we only need to prove that $v_1v_2\notin E_I(G)$.
If $v_1v_2\in E_I(G)$, then since $v_2y\notin E(G)$, it follows that $y\notin V(\Theta_{v_1v_2})$.
Since $x,b\notin V(\Theta_{v_1v_2})$, it follows that $|V(\Theta_{v_1v_2})\cap V(\Theta_{xy})|=1$, a contradiction.
So, $v_1v_2\notin E_I(G)$.

By above discussion, we have that $E_I(G)\cap E(\widehat{H}_{max}) \subseteq E(H_{max})-B_1$.
Hence, $|E_I(G)\cap E(\widehat{H}_{max})|\leq 8$, and the equality holds only if $\{v_1a,v_1z,xb,yb\}\subseteq E_I(G)\cap E(\widehat{H}_{max})$.
So, $|E_I(G)\cap E(\widehat{H}_{max})|=8$ implies that $v_1a,bx\in E_I(G)$.
 Since $ay\notin E(G)$, we have that $y\notin V(\Theta_{av_1})$.
 Therefore, $(\Theta_{av_1}-b)\cup(\Theta_{bx}-bx)$ is a $C_3\cup C_4$ of $G$, a contradiction.
Therefore, $|E_I(G)\cap E(\widehat{H}_{max})|\leq 7$.
\end{proof}

\begin{lemma}
If $H_{max}=H_{3,1}$, then $|E_I(G)\cap E(\widehat{H}_{max})|\leq 12$.
\end{lemma}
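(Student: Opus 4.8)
The plan is to work directly with the plane subgraph $\widehat{H}_{max}=G[V(H_{3,1})]$ on the seven vertices $x,y,a,b,v_1,v_2,z$, recalling from Lemma~\ref{lem-matching} that $E_{out}=\emptyset$, so that $|E_I(G)|=|E_I(G)\cap E(\widehat{H}_{max})|$ and it suffices to bound the latter by $12$. First I would pin down $E_{diff}=E(\widehat{H}_{max})-E(H_{3,1})$. The graph $H_{3,1}$ has exactly two non-triangular pseudo-faces, the quadrilaterals $av_2bxa$ and $azbv_1a$, while its triangular faces ($av_1b$, $av_2b$, $xay$, $xby$, $ayz$, $byz$) are already complete. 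Since no single face of $H_{3,1}$ has both endpoints of any missing adjacency on its boundary except through one of these two quadrilaterals, every edge of $E_{diff}$ must be a diagonal of one of them; as the diagonal $ab$ of each quadrilateral is already an edge of $H_{3,1}$, this forces $E_{diff}\subseteq\{xv_2,v_1z\}$ and hence $e(\widehat{H}_{max})\le 13+2=15$.

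Next I would rule out having both diagonals. If $xv_2,v_1z\in E(G)$, then $av_2xa$ is a triangle and $ybv_1zy$ is a vertex-disjoint $4$-cycle, so together they form a $C_3\cup C_4$, a contradiction; hence $|E_{diff}|\le1$ and $e(\widehat{H}_{max})\le 14$. I would then show that the four ``spoke'' edges $av_1,bv_1,av_2,bv_2$ cannot lie in $E_A$. For instance, if $av_1\in E_A$, then $\Theta_{av_1}$ has a triangle $av_1w$ with $w\notin V(\widehat{H}_{max})$; but $xyzbx$ is a $4$-cycle of $\widehat{H}_{max}$ avoiding $a,v_1,w$, so $av_1w\cup xyzbx$ is a $C_3\cup C_4$, a contradiction. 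The same argument (using $xyzax$ for $bv_1$, and $xyzbx$, $xyzax$ for $av_2$, $bv_2$) rules out all four. Consequently, for each such spoke edge $g$, having $g\in E_I(G)$ forces $g\in E_B$, and the only vertex of $V(H_{3,1})$ able to complete a second triangle inside the relevant quadrilateral is $z$ (for $av_1,bv_1$) or $x$ (for $av_2,bv_2$); thus $av_1$ or $bv_1\in E_I(G)$ requires $v_1z\in E(G)$, and $av_2$ or $bv_2\in E_I(G)$ requires $xv_2\in E(G)$.

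With these facts the count is immediate. If neither diagonal is present, then all four spoke edges are non-interior and both diagonals are absent, so $E_I(G)\cap E(\widehat{H}_{max})$ is contained in the remaining $9$ edges $\{ab,xy,yz,ay,by,ax,bx,az,bz\}$, giving at most $9$. If exactly one diagonal is present, say $xv_2$, then $v_1z\notin E(G)$, whence $av_1,bv_1\notin E_I(G)$; removing these two edges from the at most $14$ edges of $\widehat{H}_{max}$ leaves at most $12$ candidates. The case $v_1z\in E(G)$ is symmetric. In every case $|E_I(G)\cap E(\widehat{H}_{max})|\le 12$, as desired.

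The main obstacle is the exclusion of the spoke edges from $E_A$: one must check that for each of $av_1,bv_1,av_2,bv_2$ any external completing triangle always leaves a vertex-disjoint $4$-cycle inside $\widehat{H}_{max}$, and that the only \emph{internal} completion uses the matching quadrilateral diagonal. The remaining bookkeeping—identifying $E_{diff}$ with the two quadrilateral diagonals and carrying out the final case split—is routine once the embedding of $H_{3,1}$ and its two pseudo-faces $av_2bxa$ and $azbv_1a$ are fixed.
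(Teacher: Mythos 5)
Your proposal is correct and follows essentially the same route as the paper: identify $E_{diff}\subseteq\{xv_2,v_1z\}$, exclude having both diagonals via the $C_3\cup C_4$ formed by $axv_2a$ and $bv_1zyb$, show the spoke edges $av_1,bv_1,av_2,bv_2$ cannot lie in $E_A$ (and lie in $E_B$ only when the matching diagonal is present), and count. Your version is slightly more thorough in treating all four spokes and splitting cases at the end, where the paper argues by symmetry with $v_1z\notin E(G)$, but the underlying argument is the same.
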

\begin{proof}
It is obvious that $E_{diff}\subseteq\{xv_2,v_1z\}$. If $\{xv_2,v_1z\}=E_{diff}$, then $axv_2a\cup bv_1zyb$ is a $C_3\cup C_4$ of $G$, a contradiction.
Therefore, at least one of $xv_2,v_1z$ is not an edge of $G$ (say $v_1z\notin E(G)$ by symmetry).
Then $v_1b,v_1a\notin E_B$.
Moreover, $v_1b,v_1a\notin E_I(G)$. Otherwise, either $v_1b$ or $v_1a$ it is an edge of $E_A$, and hence there is a $C_3\cup C_4$ in $G$, a contradiction.
So, $E_I(G)\cap E(\widehat{H}_{max})\subseteq E(H_{max})\cup\{xv_2\}-\{v_1a,v_1b\}$, which implies that
$|E_I(G)\cap E(\widehat{H}_{max})|\leq 12$.
\end{proof}

\begin{lemma}
If $H_{max}=\{H_{0,i}$, $i\in[5]\}$, then $|E_I(G)\cap E(\widehat{H}_{max})|\leq 8$.
\end{lemma}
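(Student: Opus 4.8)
The plan is to bound the interior edges of $G$ that lie inside $\widehat{H}_{max}$ directly. Since $H_{max}=H_{0,i}$ forces $E_{out}=\emptyset$ by Lemma \ref{lem-matching}, we have $|E_I(G)|=|E_I(G)\cap E(\widehat{H}_{max})|$, so it suffices to count the interior edges of $G$ contained in $\widehat{H}_{max}$. I use the labels of Figure \ref{H4} and recall from Observation \ref{obser-H0} that $H_{max}$ is obtained from $H_0$ by successively attaching $\Theta$-graphs that all pass through the vertex $y$; thus $y$ has large degree, $H_{max}$ is a fan of triangles about $y$, and its only triangle not containing $y$ is $av_1b$.

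First I would show that $E_A=\emptyset$. Suppose some $e=ss'\in E_I(G)\cap E(\widehat{H}_{max})$ had a vertex of $\Theta_e$ outside $V(\widehat{H}_{max})$; then its inside $3$-face is a triangle on three vertices of $\widehat{H}_{max}$. Choosing one of the building graphs $\Theta_{xy}$, $\Theta_{ab}$, or a $\Theta$ through $y$ that is independent from $e$ and contained in $\widehat{H}_{max}$, Lemma \ref{base} would force that $\Theta$ to meet $\Theta_e$ in at least two vertices, whereas the local fan structure lets it meet $\{s,s'\}$ and the third inside vertex in at most one vertex, a contradiction. This is the same mechanism that produced $E_{out}=\emptyset$. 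Hence every interior edge of $\widehat{H}_{max}$ has both of its $3$-faces inside $\widehat{H}_{max}$, i.e. $E_I(G)\cap E(\widehat{H}_{max})=E_B$.

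Next, since each edge of a plane graph borders exactly two faces, an edge is interior exactly when both of its faces are $3$-faces of $G$. The spine edges $xy$, $ab$ and the attaching edges among $yz,yp,yw$, together with the edges lying between two adjacent building triangles such as $ya,yb,yv_1$, have both faces equal to triangles of $H_{max}$; listing these for each $H_{0,i}$ gives at most $8$ such edges, the value $8$ occurring only for $H_{0,5}$ (for $H_{0,2}$ the pseudo $3$-face $F_B^2=ybv_1$ may additionally be a face of $G$, contributing $yb,bv_1,yv_1$ and totalling $7$). Every other edge of $\widehat{H}_{max}$ borders, on one side, a pseudo-face of length $\ge 4$ (namely $F_A^i$, and $F_B^i$ for $i\in\{3,4,5\}$), so it can be interior only if a chord in $E_{diff}$ cuts that pseudo-face into a $3$-face.

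The main work, and the step I expect to be the real obstacle, is ruling out such chords. Here I would exploit the abundance of short cycles around $y$: any chord of $F_A^i$ or $F_B^i$ closes a $4$-cycle $C$ with boundary edges of the pseudo-face (or with two edges incident to $y$ when the chord joins vertices at boundary-distance two through $y$), and because $H_{max}$ carries many triangles one can always select a triangle $T$ of $H_{max}$ vertex-disjoint from $C$; then $C\cup T$ is a $C_3\cup C_4$, a contradiction. Checking that such a disjoint witness triangle exists for every possible chord of $F_A^i$ and $F_B^i$ is routine but lengthy, and it shows that no pseudo-face is subdivided and no boundary edge of a pseudo-face becomes interior. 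Combining the three steps, $E_I(G)\cap E(\widehat{H}_{max})=E_B$ reduces to the listed structural edges (together with at most those of $F_B^2$), whence $|E_I(G)\cap E(\widehat{H}_{max})|\le 8$.
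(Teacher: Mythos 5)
Your overall strategy is the same as the paper's: show that chords of the pseudo-faces of $H_{max}$ are (essentially) impossible, show that a boundary edge of a long pseudo-face cannot acquire a second $3$-face from an interior vertex, and then count the remaining candidates, which are exactly the edges of $Int(H_{max})$ (at most $8$, attained by $H_{0,5}$). The gap is that both of your blanket claims are false as stated, and the ``routine but lengthy'' verification you defer is precisely where they fail. First, $E_A=\emptyset$ cannot be derived from Lemma \ref{base} uniformly: for $H_{max}=H_{0,3}$ and $e=v_1y$ (one $3$-face being $yzv_1$, the other $yv_1w'$ with $w'$ an interior vertex of $F_B^3$), the only building block independent of $e$ is $\Theta_{ab}$, and $V(\Theta_{ab})=\{a,v_1,b,y\}$ already meets $V(\Theta_e)\supseteq\{v_1,y\}$ in two vertices, so Lemma \ref{base} yields no contradiction; for $e=by$ in $H_{0,4}$ no building block is even independent of $e$. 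The paper accordingly excludes exactly these edges in its statements (D) and (E) and admits them into the count. Second, not every chord of a pseudo-face closes a $C_4$ with a vertex-disjoint witness triangle: for $H_{max}=H_{0,1}$ the chord $yv_1$ of $F_A^1$ does not produce a $C_3\cup C_4$ inside $H_{0,1}\cup\{yv_1\}$, since every triangle of $H_{0,1}$ meets $\{y,a,b,v_1\}$; the paper's statement (A) excludes this chord for that reason and then allows $yv_1$ as a possible interior edge.

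Your final bound survives only by luck: the exceptional edges occur precisely in the cases $H_{0,1},H_{0,3},H_{0,4}$, where $|Int(H_{max})|\le 6$, so adding one exceptional edge still gives at most $7\le 8$, while for $H_{0,5}$ (where $|Int|=8$ and no slack remains) the exception-free argument does go through. But your write-up asserts that the exceptions do not exist rather than verifying that they are harmless, so as written the proof is not complete: you must either carry out the case check and discover the exceptional edges $yv_1$ (for $H_{0,1}$), $v_1y$ (for $H_{0,3}$) and $by$ (for $H_{0,4}$), or restructure the count so that it tolerates them, as the paper does.
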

\begin{proof}
For each $H_{0,i}$, $i\in[5]$, let
$$U=\{uv:uv\mbox{ is a chord of some pseudo-face of }H_{max}\mbox{ and }uv\notin E(H_{max})\}.$$
Then $E_{diff}\subseteq U$.
The following results is easy to verify.
\begin{enumerate}
  \item[(A).] If $H_{max}=H_{0,1}$, then for each $e\in U-\{yv_1\}$, $H_{0,1}\cup e$ contains a $C_3\cup C_4$.
  \item[(B).] If $H_{max}=H_{0,i}$, $2\leq i\leq 5$, then for each $e\in U$, $H_{0,1}\cup e$ contains a $C_3\cup C_4$.
\end{enumerate}
By statements (A) and (B), we have that for each $e\in Ext(H_{0,i})$, $i\in[5]$, if $e\in E_I(G)$, then $e\in E_A$.
On the other hand, the following statements hold.
\begin{enumerate}
  \item[(C).] If $H_{max}\in\{H_{0,1},H_{0,2},H_{0,5}\}$ and there is an edge $e\in Ext(H_{max})\cap E_A$, then $G$ contains a $C_3\cup C_4$.
  \item[(D).] If $H_{max}=H_{0,3}$ and there is an edge $e\in (Ext(H_{max})-\{v_1y\})\cap E_A$, then $G$ contains a $C_3\cup C_4$.
  \item[(E).] If $H_{max}=H_{0,4}$ and there is an edge $e\in (Ext(H_{max})-\{by\})\cap E_A$, then $G$ contains a $C_3\cup C_4$.
\end{enumerate}

By statements (C), (D) and (E), we have that
\begin{itemize}
\item if $H_{max}=H_{0,1}$, $E_I(G)\cap E(\widehat{H}_{max})\subseteq Int(H_{0,1})\cup\{yv_1\}$;
  \item if $H_{max}\in\{H_{0,2},H_{0,5}\}$, then $E_I(G)\cap E(\widehat{H}_{max})\subseteq Int(H_{max})$;
  \item if $H_{max}=H_{0,3}$, $E_I(G)\cap E(\widehat{H}_{max})\subseteq Int(H_{0,3})\cup\{yv_1\}$;
 \item if $H_{max}=H_{0,4}$, $E_I(G)\cap E(\widehat{H}_{max})\subseteq Int(H_{0,4})\cup\{by\}$.
\end{itemize}
Therefore, $|E_I(G)\cap E(\widehat{H}_{max})|\leq 8$ for each $H_{max}\in\{H_{0,i}:i\in[5]$.
\end{proof}

\section{Concluding remark}\label{sec-5}

For $ex_{\mathcal{P}}(n,2C_k)$, Lan, Shi and Song \cite{LSS-5} gave the following lower bound.
\begin{lemma}[\cite{LSS-5}]
Let $n$ and $k$ be positive integers with $n\geq 2k\geq 14$. Let $\varepsilon_1$ and $\varepsilon_2$ be the remainder of
$n-(2k-1)$ when divided by $k-4+\frac{k-1}{2}$ ($k$ is odd) and $k-6+\frac{k}{2}$ ($k$ is even), respectively.
\begin{enumerate}
  \item If $n$ is odd, then $ex_{\mathcal{P}}(n,2C_k)=3n-6$ for all $n\leq 3k-4$, and
  $$ex_{\mathcal{P}}(n,2C_k)\geq \left(3-\frac{1}{k-4+\lfloor\frac{k}{2}\rfloor}\right)n+
\frac{5+\varepsilon_1}{k-4+\lfloor\frac{k}{2}\rfloor}
-\frac{17}{3}+\max\{1-\varepsilon_1,0\}$$ for all $n\geq 3k-3$.
  
\item If $n$ is even, then $ex_{\mathcal{P}}(n,2C_k)=3n-6$ for all $n\leq 3k-7$, and
  $$ex_{\mathcal{P}}(n,2C_k)\geq \left(3-\frac{1}{k-6+\lfloor\frac{k}{2}\rfloor}\right)n+
\frac{7+\varepsilon_2}{k-6+\lfloor\frac{k}{2}\rfloor}
-\frac{17}{3}+\max\{1-\varepsilon_2,0\}$$ for all $n\geq 3k-6$.
\end{enumerate}
\end{lemma}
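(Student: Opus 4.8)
The statement is a lower bound on a maximum, so the plan is to prove it \emph{constructively}: for each admissible $n$ I will exhibit a planar graph $G$ on $n$ vertices that contains no two vertex-disjoint copies of $C_k$, and then count $e(G)$ to match the displayed expression. I would run the two parities of $k$ in parallel, writing $m=k-4+\lfloor k/2\rfloor$ in the odd case and $m=k-6+\lfloor k/2\rfloor$ in the even case for the denominators that appear; the fact that these two differ by $2$ is a signal that the basic building block must be assembled slightly differently according to the parity of $k$, and keeping track of this is what ultimately separates the two displayed bounds.

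For the small ranges ($n\le 3k-4$ with $n$ odd, and $n\le 3k-7$ with $n$ even) I would simply take $G$ to be a \emph{triangulation}, so that $e(G)=3n-6$ is already the maximum possible for any planar graph and nothing needs to be optimised; the only content is to produce one triangulation that is $2C_k$-free in this range. The natural candidate is a triangulation in which every copy of $C_k$ is forced to occupy a long contiguous block of vertices, so that two vertex-disjoint copies would need strictly more than $n$ vertices whenever $n$ lies below the threshold. Pinning down such a triangulation and certifying that two $k$-cycles cannot be packed disjointly is exactly what fixes the cutoffs $3k-4$ and $3k-7$, and here the parities of \emph{both} $n$ and $k$ enter the accounting.

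For the main range I would build $G$ in the spirit of the extremal graph $(\tfrac{n-2}{2}K_2)\vee K_2$ of Theorem~\ref{main-1}: two apex-type vertices $a,b$, together with a planar ``rim'' that is a chain of identical gadgets, each gadget a near-triangulated planar piece on $m$ new vertices contributing exactly $3m-1$ edges to the running total. Summing over the $\bigl\lfloor (n-(2k-1))/m\bigr\rfloor$ full gadgets produces the leading density $\bigl(3-\tfrac1m\bigr)n$; the fixed core around $a,b$ and the two boundary pieces account for the constant $-\tfrac{17}{3}$; and the $\varepsilon_1$ (resp.\ $\varepsilon_2$) leftover vertices are absorbed by one partial gadget, which is precisely the source of the correction terms $\tfrac{5+\varepsilon_1}{m}$ and $\max\{1-\varepsilon_1,0\}$ (resp.\ with $7+\varepsilon_2$). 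Planarity is immediate, since each gadget is planar and they are amalgamated along faces in the double-wheel pattern.

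The crux, and the step I expect to be the main obstacle, is verifying $2C_k$-freeness. The clean way is to arrange that $G-a$ and $G-b$ are \emph{both} $C_k$-free; then every $C_k$ of $G$ must contain both apices, so two vertex-disjoint $k$-cycles are impossible because they would have to share $a$. Establishing this reduces to a structural (girth) analysis of a single gadget capped by one apex: one must show that the longest ``rim path'' inside a gadget, once closed through the hub, never has length exactly $k$, so that no $C_k$ can form without using both apices. Carrying out this analysis is delicate, because I must simultaneously (i) make the gadget as large and dense as possible to push the density up to $3-\tfrac1m$, and (ii) keep every hub-closed cycle away from length $k$; the maximal admissible gadget turns out to have size $m=k-4+\lfloor k/2\rfloor$ or $k-6+\lfloor k/2\rfloor$ depending on the parity of $k$, which is exactly where the two cases diverge. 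Once $2C_k$-freeness is secured, the edge count is a routine summation that I would organise by the residue of $n-(2k-1)$ modulo $m$, matching the four displayed formulas.
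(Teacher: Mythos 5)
This lemma is not proved in the paper at all: it is quoted verbatim from \cite{LSS-5} as a known background result, so there is no in-paper argument to measure your attempt against. Judged on its own terms, what you have written is a strategy outline rather than a proof, and the gap is precisely the part you yourself flag as ``the crux''. You never exhibit the gadget: the entire bound rests on producing an explicit near-triangulated planar piece on $m=k-4+\lfloor k/2\rfloor$ (resp.\ $k-6+\lfloor k/2\rfloor$) vertices contributing $3m-1$ edges, together with a verification that no $k$-cycle can be assembled from rim paths closed through an apex. Without that object in hand, the claims that the leading density is $3-\tfrac{1}{m}$, that the fixed core and boundary pieces produce exactly the constant $-\tfrac{17}{3}$, and that the partial gadget yields the correction terms $\tfrac{5+\varepsilon_1}{m}$ and $\max\{1-\varepsilon_1,0\}$ are unverifiable placeholders rather than computations. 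The same applies to the small range: asserting that ``the natural candidate is a triangulation in which every copy of $C_k$ is forced to occupy a long contiguous block'' does not identify a triangulation, and the cutoffs $3k-4$ and $3k-7$ cannot be certified without one.

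Two further remarks. First, your plan to force every $C_k$ through \emph{both} apices (by making $G-a$ and $G-b$ both $C_k$-free) is stronger than necessary and correspondingly harder to arrange: it suffices that all $k$-cycles share a single vertex, which is exactly the mechanism in the author's own related construction in Section~\ref{sec-5}, where Lemma~\ref{2k-free} shows that every $k$-cycle of a graph in $\mathcal{G}_{k,\ell}$ passes through the hub $u$. Second, the statement splits according to the parity of $n$ (not only of $k$), and your outline gives no mechanism by which the parity of $n$ changes either the construction or the count; this is another sign that the decisive combinatorial details have not been confronted. As it stands the proposal is a reasonable plan, consistent in spirit with the constructions of \cite{LSS-5} and of Section~\ref{sec-5}, but it does not establish the lemma.
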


\begin{figure}[ht]
    \centering
    \includegraphics[width=350pt]{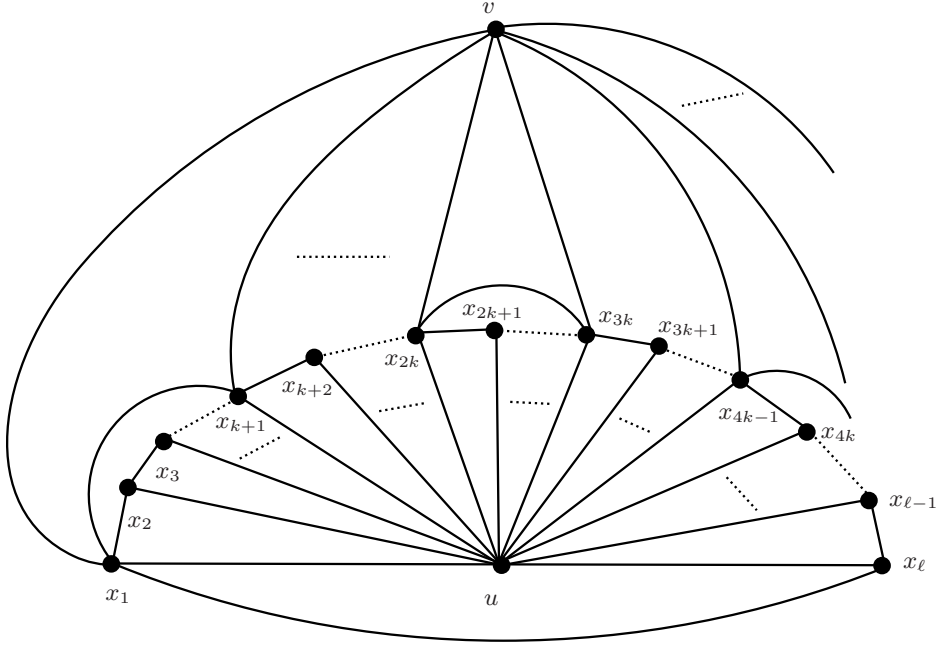}\\
    \caption{The planar graph $G_{k,\ell}$.} \label{2ck}
\end{figure}

We will improve the lower bound of $ex_{\mathcal{P}}(n,2C_k)$ when $n$ and $k$ are sufficiently large.
We first construct a family $\mathcal{G}_{k,\ell}$ of $2C_k$-free planar graphs below, where $\ell \geq 2k$.
Let $C=x_1x_2\ldots x_\ell x_1$ be a cycle and let $W=u\vee C$.
For $1\leq i\leq \ell/(2k-1)$, let $T_i$ be a triangle with $V(T_i)=\{v,x_{m_i},x_{n_i}\}$, where $m_i=(2k-1)(i-1)+1$ and $n_i=m_i+k$.
Let $G_{k,\ell}$ be a plane graph of $(\bigcup_{1\leq i\leq \ell/(2k-1)}T_i)\cup W$ (see Figure \ref{2ck}) such that each $T_i$ and $S_j=x_jx_{j+1}ux_j$ are $3$-faces, where $1\leq i\leq \ell/(2k-1)$ and $j\in[\ell]$.
It is clear that all but at most one face of $G_{k,\ell}$ is either a $3$-face or a $(k+1)$-face.
Suppose $T'$ and $T''$ are triangulations such that
\begin{enumerate}
  \item the maximum length of cycles in $T'$ is at most $k-1$;
  \item the maximum length of cycles in $T''$ is at most $k$.
  Moreover, if $T''$ contains $k$-cycles, then there exists a vertex $u^*$ of $T''$ such that any $k$-cycle of $T''$ contains $u^*$.
\end{enumerate}
Let $\mathcal{G}_{k,\ell}$ be the set of plane graphs obtained from $G_{k,\ell}$ by replacing each $3$-face $T_i$ with $T'$ and replacing each $3$-face $S_i$ with $T''$ (if the special vertex $u^*$ exists, then let $u=u^*$).
For example, let $k=4$, $T'$ be a triangle and $T''$ be a planar embedding of $K_4$.
The resulting graph $G_0\in \mathcal{G}_{4,\ell}$ is shown in Figure \ref{shi-counter-example-1}.

\begin{figure}[ht]
    \centering
    \includegraphics[width=220pt]{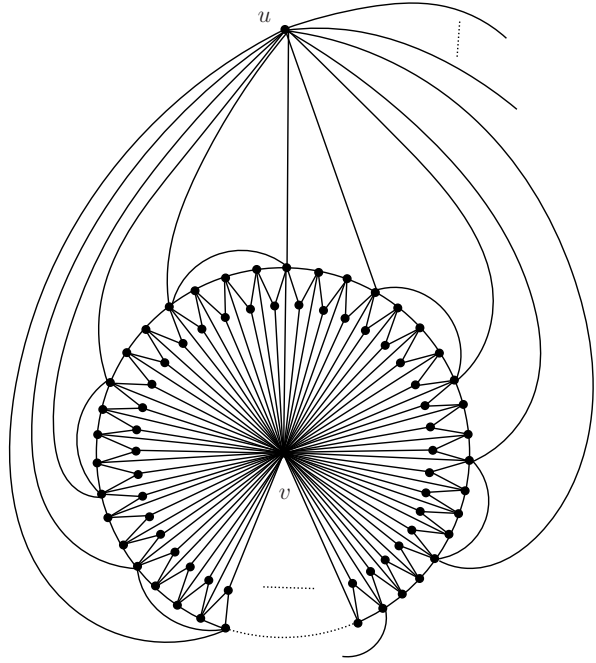}\\
    \caption{The planar graph $G_0$.} \label{shi-counter-example-1}
\end{figure}

\begin{lemma}\label{2k-free}
Any graph of $\mathcal{G}_{k,\ell}$ is $2C_k$-free.
\end{lemma}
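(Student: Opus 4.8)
The plan is to prove the stronger assertion that in every $G\in\mathcal{G}_{k,\ell}$ each copy of $C_k$ passes through the inner hub $u$; this is enough, because two vertex-disjoint $k$-cycles cannot both contain $u$. So I would assume for contradiction that $G$ contains a $k$-cycle $D$ with $u\notin V(D)$, and argue entirely inside $G-u$, the goal being to show that no cycle of $G-u$ has length exactly $k$.

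First I would record the local data. Each gadget $T'$ (inserted in place of a face $T_i$) has all cycles of length at most $k-1$, and each gadget $T''$ (inserted in place of a face $S_j$) has all of its $k$-cycles through $u^*=u$; hence after deleting $u$ the part of $G$ coming from a single $T''$ has all cycles of length at most $k-1$ as well. Crucially, each inserted gadget meets the rest of $G$ only in the three corner vertices of the triangle it replaced, so any cycle of $G-u$ that visits an interior vertex of a gadget must enter and leave that gadget through its corners. In particular a cycle of $G-u$ confined to a single gadget has length at most $k-1$, so I may assume $D$ meets the global skeleton formed by the cycle $C=x_1\cdots x_\ell$, the chords $x_{m_i}x_{n_i}$, and the vertex $v$.

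I would then split according to whether $v\in V(D)$. When $v\notin V(D)$, the cycle $D$ consists of arcs of $C$, chords, and detours through gadgets between consecutive corners. Here the built-in spacing is decisive: the arc $x_{m_i}\cdots x_{n_i}$ cut off by a chord has exactly $n_i-m_i=k$ edges, so arc-plus-chord is a $(k+1)$-cycle, while the chords have pairwise disjoint spans, so any cycle using two or more chords must traverse a whole gap and is far longer than $k$. Since an interior arc vertex has no neighbour other than its two cycle-neighbours and the local gadgets, there is no shortcut shortening such a cycle by one, and replacing a chord or an arc edge by a gadget detour either keeps $D$ inside one gadget (length $\le k-1$) or only lengthens it; in every case $|D|\neq k$. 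When $v\in V(D)$, the cycle uses exactly two edges at $v$, reaching two chord endpoints $x_a,x_b$ joined by a path $P$ in $G-u-v$. If $P$ lies inside a single $T'$ then $D$ is a gadget cycle of length $\le k-1$; if $x_a,x_b$ are the two endpoints of one chord then $P$ is either that chord (length $1$, giving $|D|=3$) or an arc (length $\ge k$, giving $|D|\ge k+2$); and if $x_a,x_b$ are corners separated by a gap then $P$ has length at least $m_{i+1}-n_i=k-1$, so $|D|\ge k+1$. In all subcases $|D|\neq k$, the sought contradiction.

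The main obstacle is precisely this last layer of case analysis: excluding that a cycle through $v$, or a cycle threading several gadgets and chords, reassembles to length exactly $k$. The leverage is twofold. First, the quantitative spacing $n_i-m_i=k$ and $m_{i+1}-n_i=k-1$ forces every relevant arc to have length $k$ or $k-1$, so that closing a path through $v$ overshoots $k$ by at least one. Second, each gadget communicates with the rest of $G$ only through its three corners, so any detour through a gadget either traps the whole cycle inside that gadget (length at most $k-1$) or compels it to run along a full arc of $C$ (length at least $k+1$). I would make the bookkeeping rigorous by contracting each gadget to its bounding triangle, classifying the finitely many types of short cycles in the resulting skeleton, and then restoring the gadget detours one segment at a time while tracking that each restoration changes the length by an amount that can never bring it to $k$.
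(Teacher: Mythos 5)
Your proposal is correct and follows essentially the same route as the paper: both reduce the claim to showing that every $k$-cycle of $G$ must pass through $u$, and both establish this from the facts that each gadget meets the rest of the graph only in its three corner vertices and that, after deleting $u$ and the gadget interiors, those corners are pairwise at distance at least $k-1$ because of the spacing $n_i-m_i=k$, $m_{i+1}-n_i=k-1$. The paper packages the case analysis into a single distance fact applied to a gadget that the hypothetical $k$-cycle straddles, whereas you enumerate skeleton cycles and restore detours, but the underlying argument is the same.
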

\begin{proof}
Let $G\in \mathcal{G}_{k,\ell}$ be a graph obtained from $G_{k,\ell}$ by replacing each $3$-face $T_i$ with $T'$ and replacing each $3$-face $S_i$ with $T''$, where $T'$ and $T''$ are triangulations defined above.
For each $T_i$ and $S_i$ of $G_{k,\ell}$, we denote the replaced triangulations by $T'_i$ and $T''_i$, respectively.
Let $\mathcal{T}=\{T''_i: i\in[\ell]\}\cup \{T'_i:1\leq i\leq \ell/(2k-1)\}$.
Let $I(T'_i)=V(T'_i)-\{v,x_{n_i},x_{m_i}\}$ for each $1\leq i\leq \ell/(2k-1)$ and let $I(T''_j)=V(T''_j)-\{x_j,x_{j+1},u\}$ for each  $j\in[\ell]$.
For each $T\in \mathcal{T}$ and any two vertices $y,z\notin I(T)$, let $d_G(T;y,z)$ denote the length of a shortest path in $G-I(T)$ joining $y$ and $z$.
The following result is obvious.
\begin{fact}\label{fact}
If $y,z\in V(T'_i)-I(T'_i)$ for $1\leq i\leq \ell/(2k-1)$, then $d_{G-u}(T;y,z)\geq k$;
if $\{y,z\}=\{x_i,x_{i+1}\}$, then $d_{W-u}(T;y,z)=\ell-1$.
\end{fact}

In order to prove that $G$ is $2C_k$-free, we only need to prove that any $k$-cycle of $G$ contains $u$.
Suppose to the contrary that there is a $k$-cycle $D$ of $G$ does not contain $u$.
Then $D$ is not a subgraph of any $T'_i$ and $T''_j$.
Therefore, there is a $T\in \mathcal{T}$ such that
\begin{enumerate}
  \item there are two vertices $y,z$ of $V(D)\cap(V(T)-I(T))$;
  \item there is a path $P$ in $D$ such that $|P|\geq 3$, $y,z$ are endpoints of $P$ and each inner vertex of $P$ is not in $V(T)$.
\end{enumerate}
If $T=T'_i$ for some $1\leq i\leq \ell/(2k-1)$, then  $y,z\subseteq \{v,x_{n_i},x_{m_i}\}$.
Since $d_{G-u}(T'_i;y,z)\geq k$ by Face \ref{fact}, $D$ contains at least $k+1$ vertices, a contradiction.
If $T$ can not be any $T'_i$, then $D$ is contained in $W$ and $T=T''_j$ for some $j\in [\ell]$.
Since $u\notin V(D)$, it follows that $\{y,z\}=\{x_j,x_{j+1}\}$.
By Face \ref{fact}, $d_{W-u}(T;y,z)=\ell-1$.
Recall that $\ell\geq 2k$.
Therefore, $|D|>k$, a contradiction.
\end{proof}

Suppose $|T'|=n_1$, $|T''|=n_2$ and $k\geq 4$.
Let $G\in\mathcal{G}_{k,\ell}$.
Then
\begin{align*}
|G|&=n=\left\lfloor\frac{\ell}{2k-1}\right\rfloor(n_1-3)+\ell(n_2-2)+2
\end{align*}
and
\begin{align*}
e(G)&=\left\lfloor\frac{\ell}{2k-1}\right\rfloor(3n_1-6)+\ell(3n_2-7).
\end{align*}
Therefore,
$$e(G)=3n-\ell+3\left\lfloor\frac{\ell}{2k-1}\right\rfloor-6.$$
Let $T$ be a maximum $C_k$-free triangulation and let $n_1=n_2=|T|$.
Then
$$\ell =\frac{2k-1}{|T'|-3+(2k-1)(|T|-2)}n+c_k,$$
and hence
\begin{align}\label{ineq-log}
ex_{\mathcal{P}}(n,2C_k)\geq e(G)=\left(3-\frac{1}{\frac{k|T|}{k-2}-\frac{4k+1}{2k-4}}\right)n+d_k,
\end{align}
where $-1\leq c_k\leq 1$ and $-10\leq d_k\leq -5$.

\begin{lemma}[\cite{MM}]\label{lem-log32}
For each positive integer $k$ there exists a $3$-connected plane triangulation $G_k$ with $|G_k|=\frac{3^{k+1}+5}{2}$ and with longest cycle of length less than $\frac{7}{2}|G_k|^{\log_32}$.
\end{lemma}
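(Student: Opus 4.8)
The plan is to realize the graphs $G_k$ through an explicit recursive construction for which the number of vertices triples at each step while the longest cycle only doubles; the exponent $\log_3 2$ and the constant $\tfrac72$ will both fall out of this "$\times 3$ vertices, $\times 2$ cycle" behaviour. First I would set $G_0 = K_4$ with a fixed outer triangle, and build $G_{k+1}$ from three copies of $G_k$ as follows: take a triangle $xyz$ together with a central vertex $o$ adjacent to $x,y,z$, splitting the interior into the three triangular faces $xyo$, $yzo$, $zxo$, and embed a copy of $G_k$ into each of these faces, identifying the outer triangle of the copy with the bounding triangle of the face. A straightforward induction shows that $G_k$ is a plane triangulation, and since a triangulation on at least four vertices is automatically $3$-connected, no separate connectivity argument is needed. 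The three copies meet only in the four connector vertices $x,y,z,o$, with $o$ lying in all three copies and each of $x,y,z$ lying in two, so the inclusion–exclusion gives $|G_{k+1}| = 3|G_k| - 5$; together with $|G_0| = 4$ this solves to $|G_k| = \tfrac{3^{k+1}+5}{2}$, as required.

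The heart of the argument is a pair of coupled parameters measuring ``how far'' a path can travel inside one gadget. For a graph $H$ with distinguished outer triangle I would define $h(H)$ to be the maximum number of vertices on a path joining two outer vertices, and $g(H)$ the analogous maximum for paths that in addition \emph{avoid} the third outer vertex, and I would abbreviate $h_k = h(G_k)$, $g_k = g(G_k)$. The key structural fact is that each copy of $G_k$ inside $G_{k+1}$ is fenced off by its bounding triangle, a $3$-separator; a standard planar cut argument then shows that any cycle of $G_{k+1}$ meets the interior of a given copy in at most one subpath, and any path meets it in a controlled number of boundary-to-boundary subpaths. Because the central vertex $o$ is shared by all three copies and may be used only once along a path or cycle, \emph{at most one} copy can be traversed through $o$ (a full $h_k$-path), and the remaining copies must be traversed avoiding $o$ (only a $g_k$-path). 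Tracking which connector vertices each subpath consumes yields recursions of the shape $h_{k+1} = \max\{h_k + g_k,\,3g_k\} + O(1)$ and $g_{k+1} = \max\{h_k,\,2g_k\} + O(1)$, and for the longest cycle $c_k$ of $G_k$ the bound $c_{k+1} = h_k + 2g_k + O(1)$, reflecting one ``full'' copy plus two ``$o$-avoiding'' copies around the cycle.

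Solving the recursions by induction gives the clean closed forms $h_k = 3\cdot 2^k + 1$ and $g_k = 2^{k+1} + 1$, whence $c_k = h_{k-1} + 2g_{k-1} + O(1) = \tfrac72\cdot 2^k$ for $k \ge 1$. Finally I would convert this into the stated bound using the vertex count: since $|G_k| = \tfrac{3^{k+1}+5}{2} > 3^k$, one has $|G_k|^{\log_3 2} > (3^k)^{\log_3 2} = 2^k$, and therefore $c_k = \tfrac72\cdot 2^k < \tfrac72\,|G_k|^{\log_3 2}$, which is exactly the claim. The appearance of the constant $\tfrac72 = 3/2 + 2$ is not accidental: it is precisely $h$-coefficient $3$ plus twice the $g$-coefficient $2$, divided by the doubling base, so the construction is essentially extremal for this bound.

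The main obstacle will be making the recursions rigorous rather than heuristic. In particular one must prove carefully that (i) a cycle crosses a bounding triangle in at most one interior path, and a path in a bounded number, so that the contribution of each copy is genuinely capped by $h_k$ or $g_k$; and (ii) the shared vertex $o$ really forces the $h_k$-versus-$g_k$ dichotomy, i.e. no clever routing can traverse two copies ``fully'' at once. Both are planar cut arguments combined with a case analysis over the connector vertices $\{x,y,z,o\}$, and the delicate part is the bookkeeping of the additive $O(1)$ constants needed to land the exact closed forms $3\cdot 2^k + 1$ and $2^{k+1}+1$; the exponent $\log_3 2$ itself, however, is robust and follows from the leading-order behaviour alone.
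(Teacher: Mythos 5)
The paper does not actually prove this lemma: it is imported verbatim from Moon and Moser \cite{MM} (whose paper, note, bounds the longest \emph{path}, which is stronger than the cycle bound quoted here), so there is no in-paper argument to compare against. Your proposal is essentially a reconstruction of the original Moon--Moser construction, and the skeleton is right: your ``three copies of $G_k$ around a central vertex'' description is the same graph as iteratively inserting a vertex into every interior face of $K_4$, the inclusion--exclusion $|G_{k+1}|=3|G_k|-5$ with $|G_0|=4$ does give $(3^{k+1}+5)/2$, a simple plane triangulation on at least four vertices is indeed $3$-connected, and the two-parameter recursion in $h_k$ (corner-to-corner paths) and $g_k$ (corner-to-corner paths avoiding the third corner) is exactly the standard mechanism. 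Your closed forms check out on small cases ($h_0=4$, $g_0=3$, $h_1=7$, $g_1=5$, and $G_1$ is Hamiltonian with $c_1=7=\tfrac72\cdot 2$), and the final conversion $c_k=\tfrac72\cdot 2^k<\tfrac72|G_k|^{\log_32}$ via $|G_k|>3^k$ is clean.

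Two caveats. First, your structural claim (i) is not literally true as stated: a cycle \emph{can} meet the interior of a fixed copy in two maximal interior subpaths, namely when it passes through a corner of the bounding triangle between two interior excursions (e.g.\ $y,R_1,x,R_2,o$ with $R_1,R_2$ interior). What saves the recursion is that the maximal subpaths of the cycle lying in the \emph{closed} copy still form at most one genuine corner-to-corner path (plus possibly an isolated corner), because the three boundary vertices supply only six cycle-edge slots; so the contribution of each copy is still capped by $h_k$ or $g_k$, but you should phrase the cut argument in terms of the closed copy rather than its interior. Second, the argument as written is a plan, not a proof: the additive constants in the recursions, and the verification that the gluing really forces the $h$-versus-$g$ dichotomy through the shared vertex $o$, are precisely the content of \cite{MM} and would need to be carried out in full; you acknowledge this, and none of it affects the exponent $\log_32$, only the constant $\tfrac72$.
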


By Lemma \ref{lem-log32}, we can assume that $T'=T''=G_t$, where
$$t=\left\lfloor\log_3\left(2\cdot(2k/7)^{\log_23}-5\right)\right\rfloor-1.$$
Set $\alpha=3^{-\varepsilon}\cdot(\frac{2}{7})^{\log_23}$, where
$$\varepsilon=\log_3\left(2\cdot(2k/7)^{\log_23}-5\right)-
\left\lfloor\log_3\left(2\cdot(2k/7)^{\log_23}-5\right)\right\rfloor.$$
Then $|G_t|=\alpha k^{\log_23}+\frac{5(1-3^{-\varepsilon})}{2}$ and the length of a longest cycle of $G_t$ less than $k$.
Recall inequality (\ref{ineq-log}), we have the following result.

\begin{theorem}
If $n\gg k$ and $k\geq 4$, then
\begin{align*}
ex_{\mathcal{P}}(n,2C_k)\geq \left(3-\frac{1}{\frac{\alpha}{k-2}k^{1+\log_23} +\beta}\right)n-\gamma_k,
\end{align*}
 where $-5<\beta<2$ and $\gamma_k$ is a real number depending only on $k$.
\end{theorem}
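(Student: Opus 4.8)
The plan is to apply the lower bound (\ref{ineq-log}) to the concrete triangulation $G_t$ supplied by Lemma \ref{lem-log32}, and then to read off the stated estimate by a direct computation together with one elementary bound on a fractional part. Since (\ref{ineq-log}) is valid for any triangulation $T$ used as both $T'$ and $T''$ in the construction, the first thing I would record is that $G_t$ is an admissible choice. Its longest cycle has length less than $k$, so in particular $G_t$ is $C_k$-free; hence it satisfies condition (1) required of $T'$ (longest cycle at most $k-1$) and, having no $k$-cycle at all, it trivially satisfies condition (2) required of $T''$. Consequently we may take $T'=T''=G_t$, so that $|T|=|G_t|=\alpha k^{\log_2 3}+\tfrac{5(1-3^{-\varepsilon})}{2}$, and (using $n\gg k$ to guarantee $\ell\ge 2k$) Lemma \ref{2k-free} ensures the resulting graph of $\mathcal{G}_{k,\ell}$ is $2C_k$-free, so the construction indeed yields a valid lower bound.

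Next I would substitute this value of $|T|$ into the denominator $\tfrac{k|T|}{k-2}-\tfrac{4k+1}{2k-4}$ appearing in (\ref{ineq-log}). Using $2k-4=2(k-2)$ and splitting off the term of highest order in $k$, the denominator becomes
\[
\frac{k|T|}{k-2}-\frac{4k+1}{2k-4}=\frac{\alpha}{k-2}\,k^{1+\log_2 3}+\beta,\qquad \beta=\frac{5k(1-3^{-\varepsilon})-(4k+1)}{2(k-2)}.
\]
With this identification, (\ref{ineq-log}) reads exactly as the asserted inequality provided we set $\gamma_k:=-d_k$; since $d_k$ is a constant with $-10\le d_k\le -5$ depending only on $k$, so is $\gamma_k$ (with $5\le\gamma_k\le 10$). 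It therefore remains only to verify $-5<\beta<2$.

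The estimate on $\beta$ is the one genuine computation. Because $\varepsilon$ is the fractional part of a logarithm we have $0\le\varepsilon<1$, hence $3^{-\varepsilon}\in(\tfrac13,1]$ and $1-3^{-\varepsilon}\in[0,\tfrac23)$. For the upper bound I would use $5k(1-3^{-\varepsilon})<\tfrac{10k}{3}$, so the numerator of $\beta$ is strictly below $\tfrac{10k}{3}-(4k+1)=-\tfrac{2k}{3}-1<0$, giving $\beta<0<2$. For the lower bound I would use $1-3^{-\varepsilon}\ge 0$, so the numerator is at least $-(4k+1)$ and $\beta\ge -\tfrac{4k+1}{2(k-2)}$; since $k\ge 4$ one has $4k+1<10(k-2)$, equivalently $\tfrac{4k+1}{2(k-2)}<5$, whence $\beta>-5$. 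This closes the argument.

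The step I expect to demand the most care is not the bound on $\beta$ but the bookkeeping underlying the clean formula for $|G_t|$ and its $C_k$-freeness: the choice $t=\lfloor\log_3(2(2k/7)^{\log_2 3}-5)\rfloor-1$ must be tight enough that $3^{t+1}\le 2(2k/7)^{\log_2 3}-5$, which forces $|G_t|\le (2k/7)^{\log_2 3}$, and then the reciprocity $\log_2 3\cdot\log_3 2=1$ turns the cycle bound of Lemma \ref{lem-log32} into $\tfrac{7}{2}\big((2k/7)^{\log_2 3}\big)^{\log_3 2}=k$. Keeping the floor aligned with the fractional part $\varepsilon$ is where an error would most easily creep in, although each individual step is elementary once $\varepsilon$ is unwound via $3^{t+1}=3^{-\varepsilon}\big(2(2k/7)^{\log_2 3}-5\big)$.
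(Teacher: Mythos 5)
Your proposal is correct and follows the paper's own route exactly: it instantiates the already-derived bound (\ref{ineq-log}) with $T'=T''=G_t$ from Lemma \ref{lem-log32}, checks that $|G_t|\le (2k/7)^{\log_2 3}$ so that the longest cycle is shorter than $k$, and reads off $\beta$ and $\gamma_k$ by direct substitution. The paper leaves the admissibility check and the estimate $-5<\beta<2$ implicit, and your computation fills in precisely those details consistently with the stated constants.
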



For $k=4$, we propose the  following  conjecture.
\begin{conjecture}\label{conj}
If $n\geq 23$, then $ex_{\mathcal{P}}(n,2C_4)\leq \frac{19}{7}(n-2)$ and the bound is tight for $14|(n-2)$.
\end{conjecture}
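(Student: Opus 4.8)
The plan is to separate the two halves of the statement: the tightness (a lower-bound construction) and the upper bound $ex_{\mathcal{P}}(n,2C_4)\le \frac{19}{7}(n-2)$, which is the real content. For tightness when $14\mid (n-2)$, I would not build a new graph but reuse the family $\mathcal{G}_{4,\ell}$ from the concluding section, taking $T'$ to be a single triangle ($n_1=3$) and $T''$ a plane $K_4$ ($n_2=4$); the resulting graph is exactly $G_0$ of Figure \ref{shi-counter-example-1}. By Lemma \ref{2k-free} it is $2C_4$-free, and specializing the counting formulas gives $n=2\ell+2$ and $e=3\lfloor \ell/7\rfloor+5\ell$. Writing $\ell=7m$ (equivalently $n-2=14m$) yields $e=38m=\frac{19}{7}(n-2)$, so the bound is attained whenever $14\mid(n-2)$. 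This half is therefore essentially immediate from the machinery already in place.

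For the upper bound I would argue by an apex-plus-discharging scheme. First dispose of the easy case: if $G$ is itself $C_4$-free, then by Dowden's bound $e(G)\le \frac{15}{7}(n-2)<\frac{19}{7}(n-2)$ and we are done; so assume $G$ contains a $4$-cycle. The guiding principle, read off from $G_0$, is that an extremal graph should consist of one apex $u$ with $\deg_G(u)=n-2$ together with a $C_4$-free remainder $G-u$ carrying $\frac{12}{7}(n-2)$ edges. Accordingly the first goal is the structural claim that \emph{all but a bounded number of $4$-cycles of $G$ pass through a common vertex $u$}; granting this, $G-u$ is (after deleting the few exceptional edges) $C_4$-free and planar, and one has the two competing estimates $\deg_G(u)\le n-2$ and $e(G-u)\le \frac{12}{7}(n-2)$.

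The numerically delicate step is the second estimate. A direct appeal to Dowden only gives $e(G-u)\le \frac{15}{7}(n-3)$, and combined with $\deg_G(u)\le n-2$ this overshoots $\frac{19}{7}(n-2)$; the construction $G_0$ shows why, since there $e(G_0-u)=\frac{12}{7}(n-2)$ lies strictly below the $C_4$-free maximum. The point is that $G-u$ is not an arbitrary $C_4$-free planar graph: in the embedding every neighbour of $u$ lies on the boundary of the face that $u$ occupies, so $G-u$ must be \emph{apex-completable}, having a single face meeting all of $N(u)$. I would therefore prove a strengthened Dowden-type inequality for $C_4$-free plane graphs possessing a face incident to $t$ vertices, via discharging with initial charge $\deg(F)-4$ on faces through Euler's formula $f=e-n+2$; each $4$-face is bounded by a $C_4$, so the $4$-faces form a pairwise-intersecting family, and the apex absorbs their surplus charge. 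Balancing the charges around $u$ against the girth-five behaviour of the remainder should yield the clean coefficient $\frac{19}{7}$.

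The main obstacle is the structural claim that the $4$-cycles admit a common apex. A pairwise-intersecting family of $4$-cycles in a plane graph need not share a vertex (three $4$-cycles can meet in three distinct vertices, sunflower-free), so some global planarity input is required to force the near-apex picture; controlling the bounded set of exceptional $4$-cycles without losing the tight constant is exactly where the argument is hardest. This difficulty, rather than any single computation, is the reason the statement is posed as a conjecture, and I would expect its resolution to hinge on a careful local analysis of how intersecting $4$-faces can be packed into a planar embedding.
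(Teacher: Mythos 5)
The statement you set out to prove is posed in the paper as Conjecture \ref{conj}; the paper itself establishes only the tightness half, by exhibiting the graph $G_0\in\mathcal{G}_{4,\ell}$ of Figure \ref{shi-counter-example-1} (with $T'$ a single triangle and $T''$ a plane $K_4$) and invoking Lemma \ref{2k-free}. Your treatment of that half is correct and coincides with the paper's: the specialization $n=2\ell+2$ and $e=3\lfloor\ell/7\rfloor+5\ell$ of the counting formulas is right, so $\ell=7m$ gives $e=38m=\frac{19}{7}(n-2)$, and the hypotheses of Lemma \ref{2k-free} are met because every $4$-cycle of $K_4$ uses all four vertices, so any vertex can serve as the distinguished vertex $u^*$.

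The upper bound $ex_{\mathcal{P}}(n,2C_4)\leq\frac{19}{7}(n-2)$, however, is proved neither by the paper nor by you, and your own write-up concedes the decisive gap: the structural claim that all but a bounded number of $4$-cycles of a $2C_4$-free planar graph pass through a common vertex. In a $2C_4$-free graph the $4$-cycles are merely pairwise intersecting, and pairwise intersection of cycles satisfies no Helly property, so no common apex is forced; you correctly note that some global planarity input would be needed here and do not supply it. Even granting the apex structure, the strengthened Dowden-type inequality for $C_4$-free plane graphs possessing a face incident to all of $N(u)$ is only gestured at via an unspecified discharging, and, as you yourself observe, the naive combination of $\deg(u)\leq n-2$ with Dowden's bound $\frac{15}{7}(n-3)$ on $G-u$ overshoots the target. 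What you have, therefore, is a correct verification of the lower bound together with a programme for the upper bound whose central lemma is missing; the statement remains a conjecture, exactly as the paper presents it.
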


See Figure \ref{shi-counter-example-1}, $G_0$ is a $2C_4$-free planar graph with
$e(G)=\frac{19}{7}(n-2)$ when $14|(n-2)$.

\end{document}